\documentclass[letterpaper,11pt]{amsart}
\usepackage{indentfirst} 
\usepackage{amssymb}
\usepackage{mathrsfs}
\usepackage{amsmath}
\usepackage{amsthm}
\usepackage{thmtools}
\usepackage{enumitem}
\usepackage{bbm}            
\usepackage[colorlinks = true, linkcolor = black, citecolor = blue ]{hyperref}
\usepackage[usenames,dvipsnames]{xcolor} 
\usepackage{tikz}
\usepackage[left=3.2cm, right=3.2cm, bottom=3.4cm]{geometry}

\theoremstyle{plain}
\newtheorem{theorem}{Theorem}[section]

\theoremstyle{plain}
\newtheorem{proposition}{Proposition}[subsection]
\newtheorem{lemma}[proposition]{Lemma}
\newtheorem{corollary}[proposition]{Corollary}

\theoremstyle{definition}
\newtheorem{definition}{Definition}[subsection]

\theoremstyle{remark}
\newtheorem{remark}{Remark}[section]

\newtheorem{example}{Example}

\newcommand{\vertiii}[1]{{\left\vert\kern-0.25ex\left\vert\kern-0.25ex\left\vert #1 
    \right\vert\kern-0.25ex\right\vert\kern-0.25ex\right\vert}}

\def\e{\epsilon}
\def\R{{\mathbb R}}
\def\N{{\mathbb N}}
\def\Z{{\mathbb Z}}
\def\M{{\mathcal M}}

\def\F{{\mathcal F}}

\def\H{{\mathcal H}}

\def\A{{\mathcal A}}

%%%%%%%%%%%%%%%%%%%%%%%%%%%%%%%%%%%%%%%%%%%%%%%%%%%%%%%%%%%%%%%%%%%%%
\begin{document}

\title{Pressure at infinity on countable Markov shifts}
\date{\today}

 \author[A.Velozo]{Anibal Velozo}  \address{Facultad de Matem\'aticas, Pontificia Universidad Cat\'olica de Chile, Avenida Vicu\~na Mackenna 4860, Santiago, Chile.}
\email{\href{apvelozo@uc.cl}{apvelozo@uc.cl}}
\urladdr{\href{https://sites.google.com/view/apvelozo/inicio}{https://sites.google.com/view/apvelozo/inicio}}

\begin{abstract}
In this article, we study the pressure at infinity of potentials defined over countable Markov shifts. We establish an upper semi-continuity result concerning the limiting behaviour of the pressure of invariant probability measures, where the escape of mass is controlled by the pressure at infinity. As a consequence, we establish criteria for the existence of equilibrium states and maximizing measures for uniformly continuous potentials. Additionally, we study the pressure at infinity of suspension flows defined over countable Markov shifts and prove an upper semi-continuity result for the pressure map. 
\end{abstract}

\maketitle

\setcounter{tocdepth}{1}
\tableofcontents

\section{Introduction}

Symbolic dynamics has a wide range of applications in the study of smooth dynamical systems.   For instance, a well known result of Bowen \cite{bo0} states that any uniformly hyperbolic diffeomorphism on a compact  manifold admits a Markov partition. The Markov partition provides a finite-to-one semiconjugacy between a subshift of finite type and most of the original system. Several applications to the study of the ergodic theory and thermodynamic formalism of the diffeomorphism follow from the study of symbolic dynamics (see \cite{bo2}). Similarly, a uniformly hyperbolic flow on a compact manifold can be coded as a suspension flow over a subshift of finite type (see \cite{bo3,ra}). In particular, this applies to the geodesic flow on a closed negatively curved manifold. For striking applications of the coding to the counting of periodic orbits see \cite{pp, la}. 

More recently, Sarig \cite{sa4} constructed Markov partitions for surface diffeomorphisms with positive topological entropy. Specifically, for any given $\e>0$ he constructed a countable Markov shift wherein every ergodic measure with entropy greater than $\e$ can be lifted to the symbolic space. Remarkable dynamical results have been obtained making use of this coding. For instance, Buzzi, Crovisier and Sarig \cite{bcs} proved that a positive entropy $C^\infty$ diffeomorphism of a closed surface admits at most finitely many ergodic measures of maximal entropy. It is worth pointing out that the use of countable Markov shifts to code the dynamics is natural in the non-uniformly hyperbolic setting, as subsets with sufficient hyperbolicity are non-compact in phase space. In recent years, several other non-uniformly hyperbolic dynamical systems have been coded with countable Markov shifts, or suspension flows over a countable Markov shift in the case of flows. For instance, see the work of Ledrappier-Lima-Sarig \cite{lls},  Lima-Matheus \cite{lm}, Ovadia \cite{o}, Lima-Sarig \cite{ls}, to mention a few.

In a slightly different direction, inducing schemes have been widely used in the study of the thermodynamic formalism of the geometric potential, for instance see \cite{y, ps, bt}. In this case, the lifting process allows to transfer information to the full shift on a countable alphabet, which is an example of particular interest in this work.

\subsection{Statements of the main results} We study various aspects of the thermodynamic formalism of countable Markov shifts (referred to as CMS). This subject has been extensively investigated by Gurevich \cite{gu}, Gurevich-Savchenko \cite{gs}, Walters \cite{wa}, Mauldin-Urba\'nski \cite{mu1}, Sarig \cite{sa1, sa2, sa3}, Sarig-Buzzi \cite{bs}, and several other mathematicians. Countable Markov shifts are non-compact generalizations of subshifts of finite type.  In contrast to their compact counterparts, they can have infinite topological entropy and may not be locally compact. Precise definitions of the terminology used here are provided in Section \ref{preli}.

Let $(\Sigma,\sigma)$ be a (one-sided) countable Markov shift. Equivalently, $\Sigma$ is the space of infinite (one-sided) walks on a directed graph with countably many vertices and $\sigma:\Sigma\to\Sigma$  is the shift map. We refer to a function $\phi:\Sigma\to\R$ as a potential. The measure-theoretic pressure of $\phi$ is defined by 
\begin{align*} P(\phi)=\sup_{\mu\in \M(\sigma)}\left\{h_\mu(\sigma)+\int \phi d\mu:\int\phi d\mu>-\infty\right\},\end{align*}
where $\M(\sigma)$ is the space of invariant probability measures of $(\Sigma,\sigma)$ and $h_\mu(\sigma)$ denotes the measure theoretic entropy of $\mu$.  Let $\M_{\phi}(\sigma)=\{\mu\in\M(\sigma):\int \phi d\mu>-\infty\}$. We say that $\mu\in \M_\phi(\sigma)$ is an equilibrium state of $\phi$ if $P(\phi)=h_{\mu}(\sigma)+\int \phi d\mu$. In physical terms, an equilibrium state maximizes the free energy of the system (see \cite{ruelle}). We are interested in properties of the pressure, equilibrium states and the pressure map of $\phi$: $$\mu\in \M_{\phi}(\sigma)\mapsto h_{\mu}(\sigma)+\int \phi d\mu.$$

An important difference between the thermodynamic formalism of subshifts of finite type and countable Markov shifts is that, in the latter, regular potentials may not admit equilibrium states. In its simplest form, a countable Markov shift can have finite entropy but no measure of maximal entropy. Sarig \cite{sa1} classified potentials with summable variations into three categories based on the convergence of certain series: transient, null recurrent, and positive recurrent. Positive recurrent potentials are those that admit an equilibrium state, and this is unique (see \cite{bs}). In general, by the definition of the measure-theoretic pressure, we are guaranteed the existence of sequences of measures  $(\mu_n)_n$ in $\M_\phi(\sigma)$ such that $$P(\phi)=\lim_{n\to\infty}\bigg( h_{\mu_n}(\sigma)+\int \phi d\mu_n\bigg).$$
For subshifts of finite type and continuous potentials $\phi$, it is well known that every limit measure of the sequence $(\mu_n)_n$  is an equilibrium state. This follows directly from the upper semicontinuity of the entropy map and the weak$^*$ compactness of $\M(\sigma)$. We are interested in the behavior of the sequence $(\mu_n)_n$ for a general countable Markov shift and a potential $\phi$ with mild regularity assumptions. More precisely, consider the following class of potentials:
$$\label{H}\H=\{\phi\in C_{uc}(\Sigma): \sup(\phi), \text{var}_2(\phi),P(\phi)<\infty\},$$
where $C_{uc}(\Sigma)$ is the space of uniformly continuous potentials on $\Sigma$, equipped with the metric $d$ defined in Section \ref{preli}, and $\text{var}_n(\phi)$ denotes the $n$-th variation of $\phi$. We will study the problem of existence and non-existence of equilibrium states for potentials in $\H$.

\subsubsection{Compactness in the space of invariant  measures}
For a transitive countable Markov shift, the space of invariant probability measures is non-compact. In the case of a countable Markov shift with finite entropy, this can be fixed by considering the space of invariant sub-probability measures endowed with the topology of convergence on cylinders, which is a compact metric space (see \cite[Theorem 1.2]{iv} and subsection \ref{introtop}). We say that a sequence $(\mu_n)_n$ converges on cylinders to $\mu$ if $\lim_{n\to\infty}\mu_n(C)=\mu(C)$, for every cylinder $C\subseteq \Sigma$. The topology of convergence on cylinders, or cylinder topology, is the topology which induces this notion of convergence. In \cite{iv}, the authors studied the topology of convergence on cylinders and identified the class of countable Markov shifts for which the space of sub-probability measures is compact under this topology: countable Markov shifts with the $\F-$property. It turns out that for countable Markov shifts without the $\F-$property, $\M(\sigma)$ cannot be compactified using the cylinder topology, as there are sequences in $\M(\sigma)$ that converge to functionals that are finitely additive but not countably additive (this is the case of the full shift; see \cite[Proposition 4.19]{iv}). In order to address the lack of a natural compactification for $\M( \sigma)$, we prove the following compactness result:

\begin{theorem}\label{thm:bsctm} Let $(\Sigma,\sigma)$ be a transitive countable Markov shift and $\phi\in\H$. Let $(\mu_n)_n$ be a sequence in $\M_\phi(\sigma)$ such that $\liminf_{n\to\infty}\int \phi d\mu_n$ is finite. Then, $(\mu_n)_n$ has a subsequence which converges on cylinders to a measure $\mu\in\M_{\le 1}(\sigma)$, where $\int \phi d\mu>-\infty$.
\end{theorem}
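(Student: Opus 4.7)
My plan is to produce $\mu$ as a cylinder-level limit of a diagonal subsequence of $(\mu_n)$, and then upgrade finite additivity to countable additivity via a tightness estimate that uses both the energy bound $\int(-\phi)\,d\mu_n\le C$ (from the finite $\liminf$) and the entropy bound $h_{\mu_n}\le P(\phi)+C$ (from the variational principle together with $P(\phi)<\infty$).

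First I replace $\phi$ by $\phi-\sup\phi$, so that $\phi\le 0$; this shifts $\int\phi\,d\mu_n$ by a constant and does not affect the conclusion. Passing to a subsequence I may assume $\int\phi\,d\mu_n\to L\in\R$, which gives $\int(-\phi)\,d\mu_n\le C$ and $h_{\mu_n}\le P(\phi)+C=:K$. Since the cylinders of $\Sigma$ are countable, a diagonal argument produces a further subsequence (still denoted $(\mu_n)$) and a function $\mu$ on cylinders with $\mu_n(D)\to\mu(D)\in[0,1]$ for every cylinder $D$. The resulting set function $\mu$ is finitely additive, and Fatou applied to counting measure yields $\mu(C)\ge\sum_i\mu(C_i)$ for every partition of a cylinder $C$ into sub-cylinders $\{C_i\}$.

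The heart of the proof is to promote this inequality to an equality by establishing tightness at the 1-cylinder level: for every $\e>0$ there exists a finite $F_\e\subset\N$ with $\sum_{a\notin F_\e}\mu_n([a])<\e$ uniformly in $n$. Setting $s(a,b):=-\sup_{[ab]}\phi\ge 0$, the bound $\text{var}_2(\phi)<\infty$ gives $-\phi(x)\ge s(x_0,x_1)$ for every $x$, hence
\[
\sum_{(a,b)} s(a,b)\,\mu_n([ab])\ \le\ \int(-\phi)\,d\mu_n\ \le\ C,
\]
where the sum runs over admissible pairs. Markov's inequality controls the mass on 2-cylinders with $s(a,b)>M$. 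The residual mass carried on 2-cylinders with $s(a,b)\le M$ is where the hypothesis $P(\phi)<\infty$ enters: exhausting $\Sigma$ through the finite subshifts $\Sigma_N$ on the first $N$ states and using $h_{\mu_n}\le K$, one shows that the mass concentrated on 2-cylinders $[ab]\not\subset\Sigma_N$ with $s(a,b)\le M$ tends to $0$ as $N\to\infty$, uniformly in $n$. Shift-invariance of each $\mu_n$ then converts 2-cylinder tightness into 1-cylinder tightness.

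Once tightness is in hand, Caratheodory extends $\mu$ to a Borel sub-probability measure on $\Sigma$, and $\sigma$-invariance is inherited from the $\mu_n$ via the identity $\sigma^{-1}[a_0\cdots a_k]=\bigsqcup_b [b a_0\cdots a_k]$ together with the uniform tightness (which justifies exchanging limit and sum). The bound $\int\phi\,d\mu>-\infty$ follows from monotone convergence applied to the truncations $-\phi\cdot\mathbf{1}_{\{s(x_0,x_1)\le N\}}$. The main obstacle is the tightness step itself: the energy bound alone is insufficient (consider $\phi\equiv 0$ on a CMS with infinite topological entropy, for which $P(\phi)=\infty$), and the finiteness of $P(\phi)$ must be genuinely used to preclude mass from spreading uniformly across infinitely many 2-cylinders where $\phi$ is close to its supremum.
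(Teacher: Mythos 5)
Your overall architecture (diagonal limit on cylinders, then upgrade finite additivity to countable additivity) is the right frame, but the step you call the heart of the proof is false as stated, and this is a genuine gap rather than a missing detail. The theorem only asserts that the limit is a \emph{sub}-probability measure; under its hypotheses all of the mass may escape. Take $\phi\equiv 0$ on any transitive CMS of finite topological entropy, e.g.\ the renewal shift: then $\phi\in\H$, $\int\phi\,d\mu_n=0$ for every $n$, yet there are sequences $(\mu_n)_n$ converging on cylinders to the zero measure (take the periodic orbits $\overline{1\,n\,(n-1)\cdots 2}$), so $\mu_n\big(\bigcup_{a\notin F}[a]\big)\to 1$ for every finite $F\subseteq\N$ and no uniform $1$-cylinder tightness can hold. (Your sanity check, $\phi\equiv 0$ with \emph{infinite} entropy, tests the wrong hypothesis, since there $P(\phi)=\infty$ and the theorem does not apply.) What actually has to be proved is the weaker, conditional non-leaking statement $\lim_{k\to\infty}\limsup_{n\to\infty}\mu_n\big(\bigcup_{s\ge k}[a_1,\ldots,a_m,s]\big)=0$ for each fixed cylinder $[a_1,\ldots,a_m]$, which by shift-invariance reduces to $2$-cylinders $[b,s]$ with $b$ fixed; this is exactly what rules out the finitely-but-not-countably-additive limits of Example \ref{counter} while remaining compatible with total loss of mass. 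Your energy estimate $\sum_{a,b}s(a,b)\,\mu_n([ab])\le C$ does control the part of $\bigcup_{s\ge k}[b,s]$ where $s(b,s)$ is large, but the complementary part is not controlled by the entropy bound $h_{\mu_n}\le K$ in the way you sketch: in the renewal-shift example $s\equiv 0$ and $h_{\mu_n}\le h_{top}(\sigma)$, yet the mass outside $\Sigma_N$ tends to $1$ for every $N$. So the decisive estimate is missing, and the statement you propose to prove in its place is too strong to be true.

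For comparison, the paper avoids any direct additivity estimate. It sets $\psi=M-\phi$ with $M>\sup\phi$, replaces $\psi$ by an $\N$-valued roof $\tau$ with $\mathrm{var}_2(\tau)=0$ and $\|\tau-\psi\|_0<\infty$ (here is where $\mathrm{var}_2(\phi)<\infty$ enters), and forms the slowed-down shift $(\widehat{\Sigma},\widehat{\sigma})$ of Section \ref{con}, which has finite Gurevich entropy precisely because $P(-\tau)<\infty$. Theorem \ref{compact} then gives compactness of $\M_{\le 1}(\widehat{\sigma})$ in the cylinder topology, and the explicit cylinder correspondence $\widehat{\mu}(C)=\mu(D)/\int\tau\,d\mu$ transports a convergent subsequence back to $\Sigma$; the hypothesis $\liminf_{n}\int\phi\,d\mu_n>-\infty$ is used only to keep $\int\tau\,d\mu_n$ bounded along a subsequence. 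A direct proof along your lines would essentially have to reprove the relevant half of that compactness theorem with ``$P(\phi)<\infty$'' standing in for ``finite entropy'', at the level of the conditional leak within each fixed cylinder rather than at the level of global tightness.
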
 

\subsubsection{Pressure at infinity, escape of mass and equilibrium states}\label{sub1}
We study the pressure at infinity of a potential,  generalizing the concept of entropy at infinity of $(\Sigma,\sigma)$, as studied in \cite{r, b, itv}, to non-zero potentials. We define the measure theoretic pressure at infinity as follows:
\begin{align*}P_\infty(\phi)=\sup_{(\mu_n)_n\to 0}\limsup_{n\to\infty}\bigg(h_{\mu_n}(\sigma)+\int \phi d\mu_n\bigg),\end{align*}
where the supremum runs over all sequences $(\mu_n)_n$ in $\M_{\phi}(\sigma)$ which converge on cylinders to the zero measure. If there is no such sequence, we set $P_\infty(\phi)=-\infty$.  In Section \ref{secpre}, we define the topological pressure at infinity of $\phi$,  denoted  by $P^{top}_\infty(\phi)$, which is analogous to the topological pressure, or Gurevich pressure, of $\phi$. We prove a variational principle for the pressures at infinity.

\begin{theorem}\label{varpriplus} Let $(\Sigma,\sigma)$ be a transitive countable Markov shift and $\phi:\Sigma\to\R$ a potential with summable variations and finite pressure. Then $P_\infty(\phi)=P^{top}_\infty(\phi)$. 
\end{theorem}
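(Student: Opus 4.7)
My plan is to prove the two inequalities $P_{\infty}^{top}(\phi)\le P_\infty(\phi)$ and $P_\infty(\phi)\le P_{\infty}^{top}(\phi)$ separately, mirroring Sarig's proof of the classical variational principle on CMS (see \cite{sa1,gs}) but applied on one side to escaping sequences of measures and on the other side to the sub-CMS obtained by deleting finite sets of states. Throughout I will use the expected description of $P_{\infty}^{top}(\phi)$ as the monotone limit, as $F$ exhausts the alphabet over finite sets of states, of the Gurevich pressure $P^{top}(\phi|_{\Sigma_{F^c}})$ on the sub-CMS $\Sigma_{F^c}$ consisting of sequences that never visit $F$.

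For the lower bound $P_{\infty}^{top}(\phi)\le P_\infty(\phi)$, each $\Sigma_{F^c}$ is itself a countable Markov shift on which the restriction of $\phi$ has summable variations and finite pressure, so by the classical variational principle for CMS there exists $\nu_F\in\M(\sigma|_{\Sigma_{F^c}})$ with $h_{\nu_F}(\sigma)+\int\phi\,d\nu_F$ within $\epsilon$ of $P^{top}(\phi|_{\Sigma_{F^c}})$. Viewed as elements of $\M(\sigma)$, these measures assign zero mass to every cylinder anchored at a symbol in $F$. Exhausting $\Sigma$ by a sequence $F_n\uparrow\Sigma$ of finite state sets and choosing $\epsilon_n\to 0$, a standard diagonal procedure produces a sequence $\nu_n$ that converges on cylinders to the zero measure and witnesses the required lower bound.

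For the upper bound $P_\infty(\phi)\le P_{\infty}^{top}(\phi)$, fix a sequence $(\mu_n)_n$ in $\M_\phi(\sigma)$ that converges on cylinders to zero and along which $L=\limsup_n\bigl(h_{\mu_n}(\sigma)+\int\phi\,d\mu_n\bigr)$ realises $P_\infty(\phi)$. For any finite set $F$ of states, the hypothesis forces $\mu_n\bigl(\bigcup_{a\in F}[a]\bigr)\to 0$. The strategy is to decompose the free energy of $\mu_n$ according to the first-return dynamics to $\Sigma_{F^c}$: the bulk contribution, governed by the induced system on $\Sigma_{F^c}$, is controlled via an Abramov-type identity and the classical variational principle applied to $\Sigma_{F^c}$, yielding a bound by $P^{top}(\phi|_{\Sigma_{F^c}})$; the excursion contribution is multiplied by the vanishing factor $\mu_n(F)$ and is therefore negligible after passing to the limit in $n$. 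Letting $n\to\infty$ for a fixed $F$ and then sending $F\uparrow\Sigma$ produces $L\le P_{\infty}^{top}(\phi)$.

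The main obstacle I expect is making the bulk/excursion decomposition rigorous without an upper bound on $\phi$, since the part of $\int\phi\,d\mu_n$ coming from orbit pieces meeting $F$ is not automatically uniformly small. One route around this is to replace the measure-theoretic splitting by a combinatorial one at the level of the Gurevich partition function: split $Z_n(\phi,a)$ according to whether the periodic orbits involved avoid or meet $F$, use summable variations to compare Birkhoff sums along nearby orbits, and transfer the combinatorial estimate back to invariant measures via a Katok-style spanning argument as in \cite{iv,itv}. Summable variations is essential for every such comparison between Birkhoff sums on $\Sigma$ and on $\Sigma_{F^c}$, while the finiteness of $P(\phi)$ ensures that all quantities appearing in the two limiting processes remain well-defined.
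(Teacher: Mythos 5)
Your argument is built on a characterization of $P^{top}_\infty(\phi)$ that is not the one the paper uses and is not equivalent to it. In this paper the topological pressure at infinity is defined through the partition functions $Z_n(\phi,a;q,M)$ over periodic orbits that \emph{do} return to the fixed symbol $a$ but spend at most a proportion $1/M$ of their time in the symbols $\{1,\dots,q\}$; it is \emph{not} the limit over finite $F$ of the Gurevich pressures $P^{top}(\phi|_{\Sigma_{F^c}})$ of the subsystems avoiding $F$. The latter quantity is in general strictly smaller. A standard example: take a single vertex $a$ with $2^n$ disjoint simple loops of length $n$ attached for each $n$ (fresh symbols on each loop). The measures supported on the subsystem generated by the length-$n$ loops have entropy $\log 2$ and converge on cylinders to the zero measure, so $h_\infty\ge\log 2$; yet $\Sigma_{\{a\}^c}$ is empty, so your candidate for the topological entropy at infinity is $-\infty$. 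This breaks both halves of your plan: your lower bound only bounds the (possibly much smaller) avoided-subsystem quantity by $P_\infty(\phi)$, and your upper bound is simply false for that quantity --- the ``excursion contribution'' is not negligible, because escaping measures can carry all of their entropy on long excursions through the compact part, and Abramov's formula does not attach the factor $\mu_n\bigl(\bigcup_{a\in F}[a]\bigr)$ to the induced entropy of those excursions.

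The paper's actual route avoids inducing entirely. It first proves (Proposition \ref{inf}) the identity $P^{top}_\infty(\phi,a)=\lim_{t\to\infty}P(\phi-tV)$ for the test function $V=\sum_{k\ge1}\frac{1}{k}1_{[k]}\in C_0(\Sigma)$, by splitting $Z_n(\phi-tV,a)$ over $\text{Per}_a(q,M,n)$ and its complement and using that $S_nV$ is small on the former and at least $n/(Mq)$ on the latter. Then (Proposition \ref{presinf}) the inequality $P_\infty(\phi)\le P^{top}_\infty(\phi)$ follows from the ordinary variational principle applied to $\phi-tV$, since $\int V\,d\mu_n\to0$ along any sequence converging on cylinders to zero; and the reverse inequality is obtained by taking near-equilibrium measures $\nu_n$ for $\phi-nV$ and checking that the bound $n\int V\,d\nu_n\le P(\phi)-M+1$ forces $(\nu_n)_n$ to converge on cylinders to the zero measure. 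If you want to salvage your outline, you would need to replace the avoided subshifts $\Sigma_{F^c}$ by a device that still permits controlled returns to the compact part, which is exactly what the penalization by $tV$ accomplishes.
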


For a potential $\phi:\Sigma\to\R$ with finite pressure, we define $$s_\infty(\phi)=\inf\{t\in [0,1]:P(t\phi)<\infty\}.$$ The following result relates the escape of mass phenomenon with the upper semi-continuity of the pressure map.

\begin{theorem} \label{thm:1}  Let $(\Sigma,\sigma)$ be a transitive countable Markov shift and $\phi\in C_{uc}(\Sigma)$ a potential such that $\emph{var}_2(\phi)$ and $P(\phi)$ are finite.  Let $(\mu_n)_n$ be a sequence  in $\M_\phi(\sigma)$ which converges on cylinders to $\lambda\mu$, where $\lambda\in [0,1]$ and $\mu\in\M_\phi(\sigma)$. Suppose that $\liminf_{n\to\infty}\int \phi  d\mu_n>-\infty$, or that $s_\infty(\phi)<1$. Then,
$$\limsup_{n\to\infty}\bigg(h_{\mu_n}(\sigma)+\int \phi d\mu_n\bigg)\le \lambda\bigg(h_\mu(\sigma)+\int \phi d\mu\bigg)+(1-\lambda)P_\infty(\phi).$$
Moreover, the inequality is sharp.
\end{theorem}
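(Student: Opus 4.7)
My approach is a decomposition argument: split each $\mu_n$ into an ``inner'' invariant piece concentrated on orbits visiting a large finite window of states (which will be close to $\lambda\mu$), and an ``outer,'' escaping invariant piece (which will be controlled by $P_\infty(\phi)$). Since $\mu\in\M_\phi(\sigma)$ is a probability measure on the countable alphabet, for every $\delta>0$ tightness produces a finite set $F$ of states with $\mu(F)>1-\delta$; cylinder convergence then gives $\mu_n(F)\to\lambda\mu(F)$. Let $\Omega_F=\bigcup_{k\ge 0}\sigma^{-k}F$ and $W_F=\Sigma\setminus\Omega_F$, the $\sigma$-invariant subshift of sequences that never visit $F$. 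Decompose $\mu_n=\alpha_n+\beta_n$ with $\alpha_n=\mu_n|_{\Omega_F}$ and $\beta_n=\mu_n|_{W_F}$; both summands are $\sigma$-invariant and mutually singular, so by additivity of entropy and integrals under disjoint invariant supports,
\begin{equation*}
h_{\mu_n}(\sigma)+\int\phi\,d\mu_n=\Bigl(h_{\alpha_n}(\sigma)+\int\phi\,d\alpha_n\Bigr)+\Bigl(h_{\beta_n}(\sigma)+\int\phi\,d\beta_n\Bigr).
\end{equation*}

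For the inner piece I would apply Abramov's formula to the first return map $T_F:F\to F$ with induced potential $\overline\phi=S_{\tau_F}\phi$: denoting by $\tilde\alpha_{n,F}$ the $T_F$-invariant probability induced from $\alpha_n/\alpha_n(\Sigma)$,
\begin{equation*}
h_{\alpha_n}(\sigma)+\int\phi\,d\alpha_n=\mu_n(F)\Bigl(h_{\tilde\alpha_{n,F}}(T_F)+\int\overline\phi\,d\tilde\alpha_{n,F}\Bigr),
\end{equation*}
and analogously for $\mu$. The cylinder convergence $\mu_n\to\lambda\mu$ transfers (after renormalization by $\alpha_n(\Sigma)$) to the induced measures on $F$; combined with upper semi-continuity of the free energy on the induced, BIP-like system (which follows from Theorem \ref{thm:bsctm} applied to the induced CMS together with the regularity inherited by $\overline\phi$ from the hypotheses on $\phi$), this yields $\limsup_n(h_{\alpha_n}+\int\phi\,d\alpha_n)\le \lambda(h_\mu(\sigma)+\int\phi\,d\mu)+O(\delta)$. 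For the outer piece set $b_n=\beta_n(\Sigma)$ and $\tilde\beta_n=\beta_n/b_n$, a $\sigma$-invariant probability supported on $W_F$; additivity gives $h_{\beta_n}+\int\phi\,d\beta_n=b_n(h_{\tilde\beta_n}+\int\phi\,d\tilde\beta_n)$. Along a diagonal choice $F=F_k\nearrow\Sigma$, the measures $\tilde\beta_n$ can be arranged to converge on cylinders to the zero measure, so by the very definition of $P_\infty(\phi)$ the outer contribution is at most $(1-\lambda)P_\infty(\phi)+O(\delta)$. Summing and letting $\delta\to 0$ proves the inequality.

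The hypothesis ``$\liminf\int\phi\,d\mu_n>-\infty$ or $s_\infty(\phi)<1$'' is used to bound $\int\phi\,d\mu_n$ uniformly from below: when $s_\infty(\phi)<1$, for $t\in(s_\infty(\phi),1)$ the variational inequality $h_{\mu_n}+t\int\phi\,d\mu_n\le P(t\phi)$ rearranges to $(1-t)\int\phi\,d\mu_n\ge (h_{\mu_n}+\int\phi\,d\mu_n)-P(t\phi)$, bounded below as soon as the limsup in the statement is finite (otherwise the conclusion is trivial). This lower bound is what permits the Abramov decomposition to stay finite and the induced integrals to behave continuously. The main technical obstacle I foresee is the outer-piece step: organizing the diagonal extraction so that the renormalized escaping measures $\tilde\beta_n$ genuinely converge to zero on every cylinder, while simultaneously maintaining enough regularity of the induced potential $\overline\phi$ as $F_k$ grows to invoke upper semi-continuity on each induced system. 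Sharpness is obtained by taking $\mu_n=\lambda\mu+(1-\lambda)\nu_n$ with $(\nu_n)\subset\M_\phi(\sigma)$ an optimizing sequence realizing $P_\infty(\phi)$; choosing $\mu$ and the $\nu_n$ ergodic and distinct, they are mutually singular, so the free energy of $\mu_n$ is exactly the convex combination and the right-hand side is attained in the limit.
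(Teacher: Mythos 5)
Your decomposition $\mu_n=\alpha_n+\beta_n$, with $\alpha_n$ carried by the orbits that ever visit the finite window $F$ and $\beta_n$ by the orbits that never do, does not isolate the escaping mass, and this breaks the argument at its core. In a transitive CMS the sequences that lose mass on cylinders are typically carried entirely by orbits that \emph{do} visit any fixed finite set of states, just with vanishing frequency: take the full shift, $F=\{1\}$, and the periodic measures of $\overline{1\,n\,n\cdots n}$ with $k_n\to\infty$ repetitions of $n$; these converge on cylinders to the zero measure, yet every orbit in their support passes through $[1]$, so $\beta_n\equiv 0$ and $\alpha_n=\mu_n$. Mixing such a sequence with a fixed $\mu$ gives $\lambda=1/2$ with the entire escaping half sitting inside your ``inner'' piece, so the outer piece cannot produce the term $(1-\lambda)P_\infty(\phi)$, and the claimed bound $\limsup_n\bigl(h_{\alpha_n}+\int\phi\,d\alpha_n\bigr)\le\lambda\bigl(h_\mu+\int\phi\,d\mu\bigr)+O(\delta)$ for the inner piece is simply the full theorem again, not an easier sub-statement. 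Relatedly, the step you invoke for the inner piece --- upper semi-continuity of the induced free energy on the return system over $F$ --- does not follow from Theorem \ref{thm:bsctm}, which is only a compactness statement; the induced system is a full shift on a countable alphabet with an unbounded induced potential, and semi-continuity there is precisely the hard analytic content (Proposition \ref{cor:v} in the paper), proved via the auxiliary shift $(\widehat{\Sigma},\widehat{\sigma})$, the computation of its entropy at infinity, and Theorem \ref{itv}. Your proposal assumes this where the paper has to prove it.

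For comparison, the paper also induces (on a single $1$-cylinder $[a]$ with $\mu([a])>0$), but the escape of mass is detected there not by a support decomposition but by the blow-up of the induced return-time integral, $\int\tau\,d\overline{\mu}_n\to\tfrac{1}{\lambda}\int\tau\,d\overline{\mu}$, while the induced measures converge weak$^*$ to a genuine probability measure; the term $(1-\lambda)P_\infty(\phi)$ is then extracted by first proving the inequality with $P(-\phi)$ in place of $P_\infty(-\phi)$ for weakly H\"older potentials (splitting into recurrent and transient cases via the Discriminant Theorem) and afterwards applying the result to $\phi_m+tV$ with $V=\sum_k\tfrac1k 1_{[k]}$ and letting $t\to\infty$ through Proposition \ref{inf}. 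Your reading of the hypothesis ``$\liminf_n\int\phi\,d\mu_n>-\infty$ or $s_\infty(\phi)<1$'' is correct and matches Lemma \ref{lem:-inf}, and your sharpness construction $\eta_n=\lambda\mu+(1-\lambda)\nu_n$ is exactly the paper's (ergodicity is not needed there, since entropy and the integral are affine), but the main inequality as proposed does not go through.
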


Theorem \ref{thm:1} is a generalization of  \cite[Theorem 1.1]{itv}, where the authors proved that the result holds when $(\Sigma,\sigma)$ has finite topological entropy and $\phi=0$. In our case  the finite entropy condition is replaced by a finite pressure assumption.  In the context of homogeneous dynamics and for the geodesic flow on non-compact  pinched negatively curved manifolds similar results  have been obtained in \cite{elmv, ek, ekp,rv,ve, gst}. The pressure at infinity has also been studied in the context of countable Markov shifts in \cite{rs} and for the geodesic flow on non-compact pinched negatively curved manifolds in \cite{gst}, where a similar result was obtained. 

In \cite{sa2}, Sarig introduced the class of strongly positive recurrent potentials (referred to as SPR). The thermodynamic formalism of SPR potentials is similar to that of H\"older potentials in subshifts of finite type. Spectral gap on suitable Banach spaces holds for SPR potentials and the RPF measure has exponential decay of correlations (see \cite{sa2, cs}). It was recently proved in \cite{rs} that if $\phi$ is weakly H\"older, bounded above and with finite pressure,  then $\phi$ is SPR if and only if $P_\infty(\phi)<P(\phi)$. In general, we say that $\phi\in C_{uc}(\Sigma)$  is SPR if $P(\phi)$ is finite and $P_\infty(\phi)<P(\phi)$. The following result is a consequence of Theorem \ref{thm:bsctm} and Theorem \ref{thm:1}: it provides a criterion for the existence of equilibrium states and describes what occurs when no equilibrium state exists.
 
\begin{theorem}\label{thm:2} Let $(\Sigma,\sigma)$ be a transitive CMS and $\phi\in\H$. Let $(\mu_n)_n$ be a sequence in $\M_{\phi}(\sigma)$ such that $P(\phi)=\lim_{n\to\infty} \big(h_{\mu_n}(\sigma)+\int \phi d\mu_n\big).$ Then:
\begin{enumerate}
\item\label{122} If $\phi$ is SPR and either $s_\infty(\phi)<1$, or $\liminf_{n\to\infty}\int \phi  d\mu_n>-\infty$, then  $(\mu_n)_n$ has a subsequence that converges in the weak$^*$ topology to an equilibrium state of $\phi$. In particular, $\phi$ has an equilibrium state.
\item \label{222} If $\phi$ does not have an equilibrium state, then there exists a subsequence  $(\mu_{n_k})_k$ that either converges to the zero measure, or satisfies that $\lim_{k\to\infty}\int \phi d\mu_{n_k}=-\infty$. 
\end{enumerate}
\end{theorem}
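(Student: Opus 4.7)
The plan is to run a single machinery through both parts, obtained by combining Theorem~\ref{thm:bsctm}, Theorem~\ref{thm:1}, the variational principle, and the trivial upper bound $P_\infty(\phi)\le P(\phi)$. Namely: after extracting a subsequence that converges on cylinders to a measure $\mu$ of total mass $\lambda\in[0,1]$, with normalization $\widetilde\mu=\mu/\lambda$ when $\lambda>0$, Theorem~\ref{thm:1} gives
\[
P(\phi)=\lim_n\Big(h_{\mu_n}(\sigma)+\int\phi\, d\mu_n\Big)\le \lambda\Big(h_{\widetilde\mu}(\sigma)+\int\phi\, d\widetilde\mu\Big)+(1-\lambda)P_\infty(\phi)\le \lambda P(\phi)+(1-\lambda)P_\infty(\phi),
\]
where the second inequality is the variational principle applied to the probability $\widetilde\mu$. (When $\lambda=0$ the middle expression is just $P_\infty(\phi)$.)

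A preliminary step is to enable the compactness statement. Theorem~\ref{thm:bsctm} requires $\liminf_n\int\phi\, d\mu_n>-\infty$. In part~\eqref{222}, the contrapositive hypothesis that no subsequence sends $\int\phi\, d\mu_{n_k}$ to $-\infty$ is equivalent to this. In part~\eqref{122}, the second alternative is exactly this assumption; the first alternative ($s_\infty(\phi)<1$) is reduced to it by choosing $t\in(s_\infty(\phi),1)$ and combining $h_{\mu_n}(\sigma)+t\int\phi\, d\mu_n\le P(t\phi)<\infty$ with $h_{\mu_n}(\sigma)+\int\phi\, d\mu_n\to P(\phi)$, which yields $\int\phi\, d\mu_n\ge (P(\phi)-P(t\phi))/(1-t)+o(1)$.

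For part~\eqref{122}, extract a subsequence via Theorem~\ref{thm:bsctm}, with limit of mass $\lambda$. The displayed chain rearranges to $(1-\lambda)(P(\phi)-P_\infty(\phi))\le 0$. The SPR hypothesis $P_\infty(\phi)<P(\phi)$ forces $\lambda=1$, so the limit $\mu$ is itself an invariant probability measure and the chain is saturated, giving $h_\mu(\sigma)+\int\phi\, d\mu=P(\phi)$, i.e.\ $\mu$ is an equilibrium state. The remaining task is to upgrade from convergence on cylinders to weak$^*$ convergence: using that (a) the $\mu_n$ and $\mu$ are invariant probabilities with no escape of mass, (b) cylinders form a clopen base of $\Sigma$, and (c) any $f\in C_b(\Sigma)$ is uniformly continuous, hence approximable uniformly on cylinder-determined compact sets by cylinder step functions, one concludes weak$^*$ convergence.

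For part~\eqref{222} I argue by contrapositive: suppose no subsequence of $(\mu_n)_n$ converges to zero and none has $\int\phi\, d\mu_{n_k}\to-\infty$. Applying Theorem~\ref{thm:bsctm} gives a subsequence converging on cylinders to $\mu$ of mass $\lambda$; the first hypothesis rules out $\lambda=0$, so $\lambda>0$. Substituting into the engine inequality forces both intermediate inequalities to be equalities, yielding $h_{\widetilde\mu}(\sigma)+\int\phi\, d\widetilde\mu=P(\phi)$, so $\widetilde\mu$ is an equilibrium state, contradicting the hypothesis of~\eqref{222}. The main obstacle I anticipate is the weak$^*$ upgrade in~\eqref{122}, since on a general transitive CMS the cylinder topology is strictly coarser than weak$^*$; however, the probability normalization together with the absence of escape of mass should yield tightness, and combined with the uniform continuity of $C_b(\Sigma)$ test functions, this should suffice to close the argument.
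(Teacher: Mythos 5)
Your proposal follows essentially the same route as the paper: reduce the case $s_\infty(\phi)<1$ to $\liminf_n\int\phi\,d\mu_n>-\infty$ (this is exactly Lemma \ref{lem:-inf}), extract a cylinder-convergent subsequence via Theorem \ref{thm:bsctm}, and feed it into Theorem \ref{thm:1} together with the variational principle and $P_\infty(\phi)\le P(\phi)$ to force $\lambda=1$ in part (\ref{122}) and $\lambda=0$ in part (\ref{222}) (the paper proves (\ref{222}) directly rather than by contrapositive, but the inequality chain is identical).

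The one step whose justification is wrong as written is the upgrade from convergence on cylinders to weak$^*$ convergence. It is not true that every $f\in C_b(\Sigma)$ is uniformly continuous (e.g.\ $f=\sum_k 1_{C_k}$ with $C_k=[k,1,\dots,1]$ of depth $k$ on the full shift is bounded and continuous but $\mathrm{var}_n(f)=1$ for all $n$), and cylinders in a general CMS need not be compact, so the ``tightness on cylinder-determined compact sets'' argument does not close on its own. The needed fact is nevertheless true and is exactly Remark \ref{rem:compatibility} (i.e.\ \cite[Lemma 3.7]{iv}): a sequence in $\M(\sigma)$ converging on cylinders to a \emph{probability} measure converges weak$^*$. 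Replacing your sketch by that citation repairs the argument; everything else is sound.
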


\subsection{Some applications of the main results} As a consequence of our methods, we derive applications in ergodic optimization and the thermodynamic formalism of suspension flows over countable Markov shifts. 

\subsubsection{Ergodic optimization} For $\phi\in C_{uc}(\Sigma)$ define $$\beta(\phi)=\sup_{\mu\in \M(\sigma)}\int \phi d\mu.$$ We say that $\mu\in\M(\sigma)$ is a maximizing measure for $\phi$ if $\beta(\phi)=\int \phi d\mu$.  Maximizing measures are a central object of study in ergodic optimization. Natural questions in this context include what conditions ensure the existence of a maximizing measure, and if such measures exist, what can be said about their dynamical properties and support. For a general account of ergodic optimization, we refer the reader to \cite{j} and  \cite{bo}.

For a continuous and compact dynamical system, the existence of maximizing measures for continuous potentials is a standard result, which follows from the weak$^*$ compactness of the space of invariant probability measures and the continuity of the integral map. In our setting, however, $\M(\sigma)$ is non-compact, and this same argument does not apply. In fact, it is relatively straightforward to construct potentials that do not admit any maximizing measure (see Example \ref{ex:nomea}). In general, hypotheses on the potential are necessary to guarantee the existence of a maximizing measure. This problem has been studied for the renewal shift in \cite{i}. For different classes of coercive potentials, the existence of maximizing measures was established in \cite{jmu, bf, fv}.  

In analogy to the pressure at infinity, we define
$$\beta_\infty(\phi)=\sup_{(\mu_n)_n\to0}\limsup_{n\to\infty}\int \phi d\mu_n,$$
where the supremum runs over all sequences in $\M(\sigma)$ which converge on cylinders to the zero measure. If there is no such sequence, we set $\beta_\infty(\phi)=-\infty$. The quantity $\beta_\infty(\phi)$ was introduced in \cite{rv2} in the context of the geodesic flow on non-compact pinched negatively curved manifolds. In \cite{rv2}, the authors proved that for uniformly continuous potentials that are bounded above, the condition $\beta_\infty(\phi)<\beta(\phi)$ implies the existence of a maximizing measure for $\phi$. In Section \ref{maxme}, we prove an inequality that relates the upper semi-continuity of the integral map $\mu\mapsto\int \phi d\mu$ to $\beta_\infty(\phi)$, which is analogous to Theorem \ref{thm:1} (see Theorem \ref{thm:maxm}). As an application, we obtain the a criterion for the existence of a maximizing measure.

\begin{theorem}\label{thm:mm} Let $(\Sigma,\sigma)$ be a transitive CMS and $\phi\in\H$ a potential such that $\beta_\infty(\phi)<\beta(\phi)$. Then, $\phi$ has a maximizing measure.
\end{theorem}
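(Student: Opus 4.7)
The plan is to imitate the strategy by which Theorem \ref{thm:2} follows from Theorems \ref{thm:bsctm} and \ref{thm:1}, replacing the free energy $h_\mu(\sigma)+\int\phi d\mu$ by the linear functional $\mu\mapsto\int\phi d\mu$. Since $\phi\in\H$ has $\sup\phi<\infty$, we have $\beta(\phi)\le\sup\phi<\infty$, and since $P(\phi)<\infty$ the class $\M_\phi(\sigma)$ is non-empty, so $\beta(\phi)>-\infty$. I would then pick a maximizing sequence $(\mu_n)_n$ in $\M(\sigma)$ with $\int\phi d\mu_n\to\beta(\phi)$; after dropping finitely many terms one has $(\mu_n)_n\subset\M_\phi(\sigma)$ and $\liminf_n\int\phi d\mu_n=\beta(\phi)$ finite, which puts us in the setting of Theorem \ref{thm:bsctm}.

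Next, Theorem \ref{thm:bsctm} produces a subsequence, still denoted $(\mu_n)_n$, converging on cylinders to some $\nu\in\M_{\le 1}(\sigma)$ with $\int\phi d\nu>-\infty$. Writing $\nu=\lambda\mu$ with $\lambda=\nu(\Sigma)\in[0,1]$ and $\mu\in\M(\sigma)$ a probability measure (chosen arbitrarily when $\lambda=0$), the integral analogue of Theorem \ref{thm:1}, namely Theorem \ref{thm:maxm}, yields
$$\beta(\phi)=\limsup_{n\to\infty}\int\phi d\mu_n\le \lambda\int\phi d\mu+(1-\lambda)\beta_\infty(\phi).$$
Using the trivial bound $\int\phi d\mu\le\beta(\phi)$, this rearranges to $(1-\lambda)\beta(\phi)\le(1-\lambda)\beta_\infty(\phi)$. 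The hypothesis $\beta_\infty(\phi)<\beta(\phi)$ precludes $\lambda<1$, so $\lambda=1$ and $\nu=\mu\in\M(\sigma)$. Substituting $\lambda=1$ back gives $\beta(\phi)\le\int\phi d\mu$, and combined with $\int\phi d\mu\le\beta(\phi)$ this shows that $\mu$ is a maximizing measure for $\phi$.

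All of the real work is concentrated in Theorem \ref{thm:maxm}, which plays the role of Theorem \ref{thm:1} in this argument; once it is in hand, the statement of Theorem \ref{thm:mm} is essentially a two-line consequence of Theorem \ref{thm:bsctm} together with the hypothesis $\beta_\infty(\phi)<\beta(\phi)$. The main obstacle therefore lies upstream, in establishing the integral-version upper-semi-continuity estimate along cylinder-convergent sequences. In the present application, the subtlety about $\int\phi d\mu_n$ possibly drifting to $-\infty$, which makes the corresponding hypothesis in Theorem \ref{thm:1} necessary, does not arise: the maximizing condition automatically supplies a finite $\liminf$ for the integrals.
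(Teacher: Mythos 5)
Your proposal is correct and follows essentially the same route as the paper: the author derives Theorem \ref{thm:mm} as a direct consequence of Theorem \ref{maxva}, whose proof is exactly your argument — apply Theorem \ref{thm:bsctm} to a maximizing sequence (whose integrals converge to the finite value $\beta(\phi)$), then use Theorem \ref{thm:maxm} together with $\beta_\infty(\phi)<\beta(\phi)$ to force $\lambda=1$. No gaps.
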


A finer description, analogous to Theorem \ref{thm:2}, is given in Theorem \ref{maxva}. The assumptions in Theorem \ref{thm:mm} allow for the consideration of  potentials with any behavior at infinity. For a coercive potential, we have $\beta_\infty(\phi)=-\infty$. On the other hand, in Proposition \ref{fv1}, we prove that zero temperature limits of potentials in $\H$ with summable variations and such that $\beta_\infty(\phi)<\beta(\phi),$ are maximizing measures (see also \cite[Theorem 1]{fv}).

\subsubsection{Suspension flows} In Section \ref{sec:flow}, we study aspects of the thermodynamic formalism of suspension flows defined over countable Markov shifts, a subject studied in \cite{bi, k, ij}. 

Suspension flows over a countable Markov shifts are continuous-time dynamical systems where the base dynamics is a countable Markov shift, and a roof function $\tau:\Sigma\to (0,\infty)$ specifies the return time to the base. We denote the suspension flow associated with $(\Sigma,\sigma)$ and $\tau$ by $(Y,\Theta=(\theta_t)_{t\ge 0})$. The space of flow-invariant probability (respectively, sub-probability) measures on $Y$ is denoted by $\M(\Theta)$ (respectively, $\M_{\le1}(\Theta)$). In \cite[Section 6]{iv}, the authors defined a topology on $\M_{\le1}(\Theta)$ that is analogous to the cylinder topology in $\M_{\le1}(\sigma)$, which we still refer to it as the cylinder topology (for details, see Section \ref{sec:flow}). We consider the following class of roof functions:
$$\Psi=\{\psi\in C_{uc}(\Sigma): \inf \psi>0, \text{var}_2(\psi)<\infty, P(-\psi)<\infty\}.$$
If $\tau\in\Psi$, then the topological entropy of $(Y,\Theta)$ is finite. Under this assumption we prove a result similar to Theorem \ref{thm:bsctm} (see also Theorem \ref{compact}).

\begin{theorem}\label{compactv} Let  $(\Sigma,\sigma)$ be a transitive countable Markov shift with a roof function $\tau\in \Psi$. Let $(Y,\Theta)$ be the associated  suspension flow.  Then, $\M_{\le1}(\Sigma,\sigma,\tau)$ is  compact with respect to the cylinder topology. Furthermore, $\M(\Theta)$ is dense in $\M_{\le1}(\Theta)$.
\end{theorem}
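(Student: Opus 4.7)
The plan is to transfer the problem from the suspension flow to the base shift by the Abramov correspondence, apply Theorem~\ref{thm:bsctm} to the potential $\phi = -\tau$ for compactness, and for density produce a sequence of probability measures carrying the missing mass to infinity along escaping periodic orbits. The hypothesis $\tau \in \Psi$ translates precisely into $-\tau \in \H$: indeed $-\tau \in C_{uc}(\Sigma)$, $\sup(-\tau) = -\inf\tau < 0$, $\text{var}_2(-\tau) = \text{var}_2(\tau) < \infty$, and $P(-\tau) < \infty$ by definition of $\Psi$. Recall that a flow-invariant sub-probability $\nu$ on $Y$ corresponds to a $\sigma$-invariant (unnormalized) finite measure $\mu$ on $\Sigma$ via $\nu = (\mu \times dt)|_Y$, with $\nu(Y) = \int \tau\, d\mu$. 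Since $\inf\tau > 0$, this forces $\mu(\Sigma) \le \nu(Y)/\inf\tau \le 1/\inf\tau$.

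For compactness, take a sequence $(\nu_n)_n$ with lifts $\mu_n$ and total masses $s_n = \mu_n(\Sigma) \in [0, 1/\inf\tau]$. After passing to a subsequence, assume $s_n \to s$. If $s = 0$, then for any base cylinder $[C]$ and bounded interval $I$ we have $\nu_n(([C] \times I) \cap Y) \le \mu_n([C])\,|I| \le s_n\,|I| \to 0$, so $\nu_n \to 0$ in the cylinder topology on $\M_{\le 1}(\Theta)$. If $s > 0$, set $\bar\mu_n = \mu_n/s_n \in \M(\sigma)$; then $\int \tau\, d\bar\mu_n = \nu_n(Y)/s_n \le 1/s_n$ remains bounded, hence $\liminf_n \int(-\tau)\, d\bar\mu_n > -\infty$. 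Theorem~\ref{thm:bsctm} applied to $\phi = -\tau$ then extracts a further subsequence with $\bar\mu_n \to \bar\mu$ on cylinders, $\bar\mu \in \M_{\le 1}(\sigma)$, and $\int \tau\, d\bar\mu < \infty$. Setting $\mu = s\bar\mu$ and letting $\nu$ be its Abramov lift, the lower semicontinuity of $\mu' \mapsto \int \tau\, d\mu'$ under cylinder convergence (obtained by monotone approximation by $\min(\tau, T)$ as $T \to \infty$) yields $\nu(Y) \le \liminf_n \nu_n(Y) \le 1$, so $\nu \in \M_{\le 1}(\Theta)$; cylinder convergence $\nu_n \to \nu$ in $\M_{\le 1}(\Theta)$ then follows from cylinder convergence of $\mu_n \to \mu$ together with uniform control of $\tau$ on cylinders of length two.

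For density, fix $\nu \in \M_{\le 1}(\Theta)$ with $\nu(Y) = \lambda \in [0,1]$; if $\lambda = 1$ there is nothing to prove. Using transitivity together with the (only interesting) case of infinitely many states, we construct periodic points $x_k \in \Sigma$ with minimal periods $p_k$ whose orbits use only vertices outside any prescribed finite list once $k$ is large. The corresponding orbit measures $\bar\mu'_k \in \M(\sigma)$ satisfy $\bar\mu'_k \to 0$ on cylinders of $\Sigma$, and their Abramov lifts $\nu'_k$, normalized by $\int \tau\, d\bar\mu'_k$, are flow-invariant probability measures on $Y$ that still converge to zero on cylinders of $\Theta$. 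The convex combinations $\tilde\nu_k := \nu + (1-\lambda)\nu'_k$ are $\Theta$-invariant of total mass $\lambda + (1-\lambda) = 1$, so $\tilde\nu_k \in \M(\Theta)$, and $\tilde\nu_k \to \nu$ on cylinders.

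The main obstacle lies in the case $s > 0$, where one must justify that cylinder convergence on the base lifts to cylinder convergence of the flow measures in the sense of Section~\ref{sec:flow}. Because $\tau$ is unbounded, $Y$ cannot be decomposed into finitely many flow boxes of bounded height: the argument must truncate $\tau$ at a height $T$, use the uniform bound $\int \tau\, d\mu_n \le 1$ to show that the mass above height $T$ is uniformly small in $n$, and then let $T \to \infty$. A secondary delicate point is showing that the Abramov lift $\nu$ belongs to $\M_{\le 1}(\Theta)$, rather than merely being a finite $\Theta$-invariant measure, which is exactly what the lower semicontinuity of $\mu \mapsto \int \tau\, d\mu$ under cylinder convergence delivers.
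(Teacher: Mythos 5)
Your compactness argument is essentially the paper's: transfer to the base via Ambrose--Kakutani, apply Theorem~\ref{thm:bsctm} with $\phi=-\tau$, and use Lemma~\ref{tired} to see that the limit has $\int\tau\,d\mu<\infty$ and mass at most one. The ``main obstacle'' you flag at the end is actually not there: by Definition~\ref{def:top1} the cylinder topology on $\M_{\le1}(\Theta)$ only tests sets of the form $C\times[0,c]$ with $c\le\inf\tau$, and for such sets $\nu(C\times[0,c])=c\,\mu(C)$ exactly (in your unnormalized parametrization), so no truncation at a height $T$ is needed; cylinder convergence of the flow measures is literally equivalent to cylinder convergence of the unnormalized base measures. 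This half of your proof is correct.

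The density argument has a genuine gap. You claim that transitivity plus an infinite alphabet lets you ``construct periodic points $x_k$ whose orbits use only vertices outside any prescribed finite list.'' This is false in general, and the failure is exactly the content of Theorem~\ref{cry}: if the graph has a finite uniform Rome $F$ (e.g.\ the star graph with edges $1\to n$ and $n\to1$ for all $n$, where every periodic orbit spends at least half its time at the vertex $1$), then every periodic measure gives mass at least $1/(N+1)$ to $\bigcup_{s\in F}[s]$, and \emph{no} sequence in $\M(\sigma)$ converges on cylinders to the zero measure. So your escaping orbit measures $\bar\mu_k'$ need not exist, and the convex combinations $\tilde\nu_k$ cannot be built this way. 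Note that such graphs are not excluded by the hypotheses: they force $h_{top}(\sigma)=\infty$, but $P(-\tau)<\infty$ can still hold for unbounded $\tau\in\Psi$. The missing idea is that, by Lemma~\ref{topsv}, the lifted measures $\varphi(\mu_k)$ also converge to the zero measure when $\int\tau\,d\mu_k\to\infty$, even if the base measures $\mu_k$ do not escape. The paper exploits this: in the finite entropy case one gets base measures converging to zero from Theorem~\ref{compact} (density of $\M(\sigma)$ in $\M_{\le1}(\sigma)$ under the $\F$-property), while in the infinite entropy case one takes $\mu_k$ with $h_{\mu_k}(\sigma)\to\infty$ and uses the pressure bound $h_{\mu_k}(\sigma)-\int\tau\,d\mu_k\le P(-\tau)<\infty$ to force $\int\tau\,d\mu_k\to\infty$. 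Your proof needs this case distinction (or some substitute for it) to cover graphs with a finite uniform Rome.
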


We define the pressure of $\phi:Y\to\R$, by the formula
$P^\Theta(\phi)=\sup_{\nu\in\M(\Theta)} \big(h_\nu(\Theta)+\int \phi d\nu\big),$
where $h_\nu(\Theta)$ is the entropy of $\nu$ with respect to the time one map $\theta_1$. We have added the superscript $\Theta$ to emphasize that the dynamical system under consideration is the suspension flow $(Y,\Theta)$. We define the pressure at infinity of $\phi$ as
$$P^\Theta_\infty(\phi)=\sup_{(\nu_n)_n\to 0} \limsup_{n\to\infty} \bigg(h_{\nu_n}(\Theta)+\int \phi d\nu_n\bigg),$$
where the supremum runs over all sequences $(\nu_n)_n$ in $\M(\Theta)$ which converge on cylinders to the zero measure. If  there is no such sequence, we set $P^\Theta_\infty(\phi)=-\infty$. In this context we prove a result analogous to Theorem  \ref{thm:1}. Define the following class of potentials:
$$\H_Y=\{\phi\in C_b(Y): \Delta_\phi\in C_{uc}(\Sigma), \text{var}_2\Delta_\phi<\infty\},$$
where $C_b(Y)$ is the space of continuous and bounded functions on $Y$.

 \begin{theorem}\label{thm:susp} Let $\tau\in \Psi$ and $\phi\in \H_Y$. Let $(\nu_n)_n$ be a sequence in $\M(\Theta)$ which converges on cylinders to $\lambda\nu$, where $\lambda\in [0,1]$ and $\nu\in \M(\Theta)$. Then,
$$\limsup_{n\to\infty} \bigg(h_{\nu_n}(\Theta)+\int \phi d\nu_n\bigg)\le \lambda \bigg(h_{\nu}(\Theta)+\int\phi d\nu\bigg)+(1-\lambda)P_\infty^\Theta(\phi).$$
Moreover, this inequality is sharp.
\end{theorem}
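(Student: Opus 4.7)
The plan is to reduce the statement to its base-dynamics counterpart, Theorem \ref{thm:1}, via the Abramov correspondence. For each $\nu_n \in \M(\Theta)$ there is a unique $\mu_n \in \M(\sigma)$ from which $\nu_n$ is built by the standard suspension; setting $T_n := \int \tau\, d\mu_n$, the Abramov formulas give
\[
 h_{\nu_n}(\Theta)+\int\phi\,d\nu_n=\frac{h_{\mu_n}(\sigma)+\int\Delta_\phi\,d\mu_n}{T_n}.
\]
Writing $c := P^\Theta_\infty(\phi)$, I introduce the base potential $\Phi := \Delta_\phi - c\tau$. Since $\phi \in \H_Y$ and $\tau \in \Psi$, one checks that $\Phi \in C_{uc}(\Sigma)$ with $\text{var}_2(\Phi) < \infty$; I expect the suspension-flow preliminaries in Section \ref{sec:flow} to supply the Abramov-type identity $P_\infty(\Phi) \le 0$ together with $P(\Phi) < \infty$, so that Theorem \ref{thm:1} becomes applicable to the potential $\Phi$.

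The next step is to translate the cylinder convergence on $Y$ into information about $(\mu_n)_n$. Using cylinder compactness of $\M_{\le 1}(\Theta)$ (Theorem \ref{compactv}) and of $\M_{\le 1}(\sigma)$ (Theorem \ref{thm:bsctm}), I pass to a subsequence along which $T_n \to T \in [\inf\tau, \infty]$ and $\mu_n$ converges on cylinders to some $\lambda_0 \mu_0$. Comparing the $\nu_n$-measure of flow cylinders $\pi(C \times [0,t])$ (for $C$ a cylinder in $\Sigma$ and $t < \inf_C \tau$) to $t\mu_n(C)/T_n$, and using $\nu_n \to \lambda\nu$ on cylinders, yields $\mu_n(C)/T_n \to \lambda \mu_\nu(C)/T_\nu$, where $\mu_\nu$ is the Abramov lift of $\nu$ and $T_\nu := \int \tau\,d\mu_\nu$. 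When $T < \infty$ this identifies $\mu_0 = \mu_\nu$ and $\lambda_0 = \lambda T/T_\nu$; when $T = \infty$ it forces $\mu_n \to 0$ on cylinders, and hence $\lambda = 0$.

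With this set-up, the case $T < \infty$ is handled by applying Theorem \ref{thm:1} to $(\mu_n)_n$ with the potential $\Phi$. The auxiliary hypothesis $\liminf \int \Phi\,d\mu_n > -\infty$ is automatic: $\int \Phi\,d\mu_n = T_n(\int \phi\,d\nu_n - c)$, and both $\phi$ and $T_n$ are bounded on the subsequence. Dividing the resulting bound by $T_n \to T$, using $P_\infty(\Phi) \le 0$ to discard the escape term on the right, and then re-introducing the constant $c$ through the Abramov identity, recovers the stated inequality. The case $T = \infty$ forces $\lambda = 0$ and the inequality reduces directly to the definition of $P^\Theta_\infty(\phi)$. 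For sharpness, I combine a sequence $\nu_n^{(1)} \to 0$ on cylinders with $h_{\nu_n^{(1)}}(\Theta) + \int \phi\,d\nu_n^{(1)} \to P^\Theta_\infty(\phi)$ (which exists by definition of the pressure at infinity) together with the fixed measure $\nu$, weighted by $1-\lambda$ and $\lambda$ respectively.

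The main obstacle I anticipate is establishing the Abramov-type pressure-at-infinity identity $P_\infty(\Delta_\phi - P^\Theta_\infty(\phi)\tau) = 0$ — or at least the inequality $\le 0$ needed above, and the $\ge 0$ direction used for sharpness — which is the pressure-at-infinity analogue of the standard relation $P(\Delta_\phi - P^\Theta(\phi)\tau) = 0$ for the topological pressure, and which ought to be isolated as a preparatory lemma in Section \ref{sec:flow}. A secondary technical point is the careful justification that cylinder convergence on $Y$ governs $\mu_n(C)/T_n$ uniformly enough in $C$ to pin down $(\lambda_0,\mu_0)$, particularly when $T_n \to \infty$ so that the escape of mass on $Y$ occurs through the roof rather than through the base.
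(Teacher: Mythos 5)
Your proposal follows essentially the same route as the paper: reduce to the base via the Abramov/Kac correspondence (the paper's Lemma \ref{topsv} is exactly your translation of cylinder convergence on $Y$ into cylinder convergence of $(\mu_n)_n$ plus convergence of $\int\tau\,d\mu_n$, including the case where mass escapes through the roof), apply Theorem \ref{thm:1} to a potential of the form $\Delta_\phi-t\tau$, and use the identity $P_\infty^\Theta(\phi)=\inf\{t:P^{top}_\infty(\Delta_\phi-t\tau)\le 0\}$, which the paper isolates precisely as the preparatory lemma you anticipated (Lemma \ref{lem:formulapara}). The one adjustment you would need: the paper works with $t>P_\infty^\Theta(\phi)$ strictly and lets $t\downarrow P_\infty^\Theta(\phi)$ only at the very end, rather than setting $t=c:=P_\infty^\Theta(\phi)$ as you do, because at the critical value the potential $\Delta_\phi-c\tau$ may have infinite pressure and $P_\infty(\Delta_\phi-c\tau)\le 0$ may fail (this happens when $c$ coincides with the abscissa where $t\mapsto P(\Delta_\phi-t\tau)$ blows up, Case 2 of Lemma \ref{lem:formulapara}), so Theorem \ref{thm:1} need not be applicable there.
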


In this context, we say that $\phi:Y\to\R$ is SPR if $P^{\Theta}(\phi)$ is finite and $P_\infty^{\Theta}(\phi)<P^\Theta(\phi)$. At the end of Section \ref{sec:flow} we prove that if $\tau\in\Psi$ and $\phi\in \H_Y$ is SPR, then $\phi$ has an equilibrium state (see Theorem \ref{fine:sus}).

\subsection{Organization of the article:} 
\begin{itemize}
\item Section \ref{preli}: We review known facts about the thermodynamic formalism of countable Markov shifts, the topology of convergence on cylinders, the entropy at infinity of a CMS, and finally, a useful recoding that will be needed later in the article. 
\item Section \ref{secpre}: We study the topological pressure at infinity and establish a formula in terms of the topological pressure. We also prove Theorem \ref{varpriplus}, which provides the variational principle for pressures at infinity.
\item Section \ref{sec:spr}: We characterize the existence of sequences of invariant probability measures that converge on cylinders to the zero measure, in terms of a combinatorial property of the graph associated with a countable Markov shift.
\item Section \ref{con}: We consider a Markov potential with values in $\N$, construct a new countable Markov shift, and calculate its entropy at infinity. We prove Theorem \ref{thm:bsctm}.
\item Section \ref{infinite}: Using the countable Markov shift constructed in Section \ref{con}, we prove Theorem \ref{thm:1}, the main upper semi-continuity result for the pressure map. 
\item Section \ref{maxme}: We apply the previously obtained results to prove Theorem \ref{thm:2}. We prove that $\beta_\infty(\phi)$ is the asymptotic slope of the pressure at infinity and establish an upper semi-continuity result for the integral map which is analogous to Theorem  \ref{thm:1}. Furthermore, we prove Theorem \ref{thm:mm} and obtain a  result concerning zero temperature limits of a suitable class of potentials.
\item Section \ref{sec:flow}: We prove Theorem \ref{compactv} and Theorem \ref{thm:susp}.
\end{itemize}

\section{Preliminaries}\label{preli}
In this section, we review some facts about countable Markov shifts and their thermodynamic formalism. Good references on this topic include \cite{sarigbook} and \cite[Section 2]{mu}.

\subsection{Countable Markov shifts} Let $S$ be an alphabet with countably many symbols and $M=(M_{i,j})_{i,j\in S}$  a $S\times S$ matrix  with entries $0$ or $1$. The symbolic space associated to $S$ and $M$  is given by$$\Sigma=\{(x_1,x_2,\ldots)\in S^{\N}: M_{x_i,x_{i+1}}=1,\text{ for all }i\in \N\}.$$
We endow $S$ with the discrete topology and $S^{\N}$ with the product topology. On $\Sigma$ we consider the topology induced by the natural inclusion $\Sigma\subseteq S^{\N}$. The shift map $\sigma:\Sigma\to\Sigma$ is given by $\sigma(x_1,x_2,x_3,\ldots)=(x_2,x_3,\ldots)$. The dynamical system $(\Sigma,\sigma)$ is called a {countable Markov shift} (CMS).

\begin{remark} We will always assume that $S$ is infinite. Since $\N$ has a natural order, it is sometimes convenient to identify $S$ with $\N$. Depending on the context and convenience, we might use either $S$ or $\N$ as our alphabet.
\end{remark}

Given $x\in \Sigma$, we denote by $x_i$ to its $i$-th coordinate; equivalently, the first coordinate of $\sigma^{i-1}(x)$. A cylinder of length $m$ is a set of the form $$[a_1,\ldots,a_{m}]=\{x\in\Sigma: x_i=a_i,\forall i\in\{1,\ldots ,m\}\}.$$
An admissible word is a sequence $a_1\ldots a_n$ of symbols in $S$ such that $M_{a_i,a_{i+1}}=1$, for every $i\in\{1,\ldots,n-1\}$. A cylinder $[a_1,\ldots,a_n]$ is non-empty if and only if $a_1\ldots a_n$ is an admissible word. The collection of cylinders is a basis for the topology on $\Sigma$. 

We define a metric $d$ on $\Sigma$ by declaring $d(x,y)=0$, if $x=y$; $d(x,y)=1$, if $x_1\ne y_1$; and $d(x,y)=2^{-k},$ if $k$ is the length of the longest cylinder containing $x$ and $y$. Note that balls in the metric $d$ are cylinder sets. Given a potential $\phi:\Sigma\to \R$, we define  its $n$-variation as
$$\text{var}_n(\phi)=\sup\{|\phi(x)-\phi(y)|: \forall x,y\in \Sigma \text{ such that }d(x,y)\le 2^{-n}\}.$$
Let $S_n\phi(x)=\sum_{k=0}^{n-1}\phi(\sigma^k x),$ be the Birkhoff sums of $\phi$. 

We say that $\phi$ is bounded away from zero if   $\inf \phi>0$. The space of continuous functions on $\Sigma$ is denoted by $C(\Sigma)$.  The space of bounded and continuous functions on $\Sigma$ is denoted by $C_b(\Sigma)$. For $\phi\in C_b(\Sigma),$ we consider the $C^0-$norm $\|\phi\|_0=\max_{x\in\Sigma}|\phi(x)|$, which defines the $C^0-$topology. The space of uniformly continous functions on $\Sigma$ with respect to $d$ is denoted by $C_{uc}(\Sigma)$. Note that $\phi\in C_{uc}(\Sigma)$ if and only if $\lim_{k\to\infty} \text{var}_k(\phi)=0$. Let $C_{b, uc}(\Sigma)=C_b(\Sigma)\cap C_{uc}(\Sigma)$. We say that $\phi$ has summable variations if $\sum_{n\ge 2} \text{var}_n(\phi)<\infty$. We say that $\phi$ is  weakly H\"older if there exist $\lambda\in (0,1),$ and a constant $C>0$ such that $\text{var}_n(\phi)\le C \lambda^n$, for every $n\ge 2$. A function $\phi$ is called locally constant depending on $n$ coordinates if $\text{var}_n(\phi)=0$.  We say that $\phi$ is locally constant if $\text{var}_n(\phi)=0$, for some $n\in\N$. 

In the introduction, we defined
$$\label{H}\H=\{\phi\in C_{uc}(\Sigma): \sup(\phi), \text{var}_2(\phi),P(\phi)<\infty\}.$$
Note that $\H$ is $C^0-$closed in $C(\Sigma)$. Moreover, locally constant potentials are dense in $\H$ (uniformly continuous potentials can be $C^0-$approximated by locally constant potentials). Since locally constant potentials with finite second variation are weakly H\"older, $\H$ is the closure of the space of  weakly Hölder potentials that are bounded above. We are interested in the study of the thermodynamic formalism of potentials in $\H$.

We say that $(\Sigma, \sigma)$ is topologically transitive (or transitive) if for each pair $(a,b) \in S \times S$, there exists an admissible word starting with $a$ and ending with $b$. We say that $(\Sigma, \sigma)$ is topologically mixing (or mixing) if for each pair $(a,b) \in S \times S$ there exists a number $N(a,b)$ such that for every $n \geq N(a,b)$ there is an admissible word of length $n$ starting with $a$ and ending with $b$.

A directed graph can be associated with a CMS. If $(\Sigma, \sigma)$ is the CMS with alphabet $S$ and transition matrix $M$, consider the graph $G = (V, E)$, where the set of vertices $V$ is identified with $S$, and $(i, j) \in E$ if and only if $M_{i, j} = 1$. Points in $\Sigma$ correspond to infinite forward paths over $G$, and the shift map represents the motion along these paths. Note that $(\Sigma, \sigma)$ is transitive if and only if $G$ is strongly connected.

\subsection{Topologies on the space of invariant measures}\label{introtop} We denote by $\M_{\le1}(\sigma)$ the space of invariant sub-probability measures of $(\Sigma, \sigma)$. More precisely, $\mu \in \M_{\le1}(\sigma)$ if $\mu$ is a non-negative countably additive Borel measure such that $\mu(\Sigma) \in [0,1]$, and $\mu(A) = \mu(\sigma^{-1}A)$ for every Borel set $A \subseteq \Sigma$. The mass of $\mu \in \M_{\le1}(\sigma)$ is given by $|\mu| := \mu(\Sigma)$. We denote by $\M(\sigma)$ the space of invariant probability measures of $(\Sigma,\sigma)$, that is, elements in $\M_{\le1}(\sigma)$ with mass equal to one. The zero measure is the measure that assigns zero to every Borel set, and it is the unique element in $\M_{\le1}(\sigma)$ with zero mass.

 In this article we are  interested in the behaviour of the pressure under limits of invariant probability measures. To describe such limits we use two related topologies on $\M_{\le 1}(\sigma)$. 

We say that $(\mu_n)_n$ converges in the weak$^*$ topology to $\mu$ if  $\lim_{n\to\infty} \int fd\mu_n=\int fd\mu,$ for every $f\in C_b(\Sigma)$. If $(\mu_n)_n$ is a sequence in $\M(\sigma)$, then any weak$^*$ limit is also in $\M(\sigma)$. We say that $(\mu_n)_n $ converges on cylinders to $\mu$ if $\lim_{n\to\infty}\mu_n(C)=\mu(C),$ for every cylinder $C\subseteq \Sigma$. This notion of convergence induces the topology of convergence on cylinders, or cylinder topology, on $\M_{\le1}(\sigma)$. This topology is metrizable, for instance consider the metric $\rho:\M_{\le1}(\sigma)\times \M_{\le1}(\sigma)\to\R_{\ge 0},$ given by
\begin{align}\label{metricc} \rho(\mu_1,\mu_2)=\sum_{i\in \N}\frac{1}{2^i}|\mu_1(C_i)-\mu_2(C_i)|,\end{align}
where $(C_i)_i$ is an enumeration of the set of cylinders of $\Sigma$. 

\begin{remark}\label{rem:compatibility}
If $(\mu_n)_n$ is a sequence in $\M(\sigma)$ which converges on cylinders to $\mu$, and $\mu$ is a probability measure, then $(\mu_n)_n$ converges weak$^*$ to $\mu$ (see \cite[Lemma 3.7]{iv}). 
\end{remark}

We say that $(\Sigma,\sigma)$ has the $\F-$property if, for every $a\in S$ and $n\in \N$, there are finitely many admissible words of length $n$ that start and end with $a$. Equivalently, there are only finitely many periodic points of a given period in $[a]$. 

\begin{theorem}[{\cite[Theorem 1.2]{iv}}] \label{compact} Let $(\Sigma, \sigma)$ be a transitive CMS with the $\F$-property. Then, $\M_{\le 1}(\sigma)$ is a compact metric space with respect to the cylinder topology. Furthermore, $\M(\sigma)$ is dense in $\M_{\le 1}(\sigma)$. \end{theorem}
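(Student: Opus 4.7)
I would split the proof into three steps: metrizability, sequential compactness of $\M_{\le1}(\sigma)$, and density of $\M(\sigma)$ inside it. Metrizability is immediate from the explicit metric $\rho$ in \eqref{metricc}, which visibly induces the cylinder topology. For sequential compactness, given $(\mu_n)_n\subseteq \M_{\le1}(\sigma)$, I would enumerate the cylinders as $(C_i)_{i\in\N}$ and apply a Cantor diagonal argument: since $\mu_n(C_i)\in[0,1]$ one extracts a subsequence $(\mu_{n_k})_k$ so that the numerical limits $m(C):=\lim_k \mu_{n_k}(C)\in[0,1]$ exist for every cylinder $C$. Finite additivity of the $\mu_{n_k}$ on finite disjoint unions of cylinders and their shift-invariance pass to the limit, so $m$ is a finitely additive, shift-invariant set function on the algebra generated by cylinders.

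The crucial step is to upgrade $m$ to a countably additive measure; by Carath\'eodory this reduces to showing $m([w])=\sum_{b\in S} m([wb])$ for every admissible word $w$, and this is where the $\F$-property enters decisively. The inequality $\ge$ is Fatou along the subsequence. For the reverse, fix $N\in\N$: by the $\F$-property there are only finitely many admissible loops of length less than $N$ starting and ending with $w$, and letting $F_N\subseteq S$ be the finite collection of symbols that can appear immediately after $w$ in such short loops, any $x\in[w]$ with $x_{|w|+1}\notin F_N$ must have first-return time $\tau_{[w]}(x)\ge N$. Applying Kac's lemma to the renormalization of each $\mu_{n_k}$ yields $\mu_{n_k}(\{x\in[w]:\tau_{[w]}(x)\ge N\})\le 1/N$, hence $\mu_{n_k}([w])-\sum_{b\in F_N}\mu_{n_k}([wb])\le 1/N$. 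Passing to $k\to\infty$ and then $N\to\infty$ gives $m([w])\le \sum_b m([wb])$, as required. Carath\'eodory extension then produces the desired $\mu\in\M_{\le1}(\sigma)$ with $\mu(C)=m(C)$ on cylinders.

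For density, given $\mu\in\M_{\le1}(\sigma)$ with $|\mu|=c<1$, the plan is to write $\mu_k=\mu+(1-c)\nu_k$ for a sequence $(\nu_k)_k\subseteq\M(\sigma)$ converging on cylinders to the zero measure. To construct $(\nu_k)_k$, exhaust $S$ by finite sets $S_1\subsetneq S_2\subsetneq \cdots$ and, using transitivity together with the infinite alphabet, build for each $k$ a periodic orbit measure $\nu_k$ supported on an admissible loop based outside $S_k$, taking the loop sufficiently long (by concatenating with transit loops, where the $\F$-property bounds the visits of short loops to any given finite $S_j$) so that every cylinder with symbols in $S_j$ receives $\nu_k$-mass at most $1/k$ whenever $k\ge j$. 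Then $\nu_k\to 0$ on cylinders and consequently $\mu_k\to\mu$, establishing density.

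The principal obstacle is the countable additivity step: the $\F$-property is precisely the combinatorial input that, via Kac's lemma, converts finite additivity of $m$ into $\sigma$-additivity. Without it the full-shift example \cite[Proposition 4.19]{iv} shows the limit set function may be merely finitely additive, so any proof must locate the $\F$-hypothesis at this exact point.
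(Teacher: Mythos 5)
First, a point of context: the paper does not actually prove Theorem \ref{compact} --- it is quoted from \cite[Theorem 1.2]{iv} --- so there is no in-paper proof to compare against. The closest in-paper analogue is Lemma \ref{lem:nicev} in the appendix, which runs exactly your compactness scheme (diagonal extraction on cylinders, then verification of the countable-additivity criterion $m([w])=\sum_b m([wb])$, citing \cite[Proposition 4.11]{iv} for the extension step). Your compactness half is essentially correct and, I believe, is the intended argument: the reduction to $m([w])=\sum_{b}m([wb])$ is the right criterion, and your use of the $\F$-property plus Kac's lemma (points of $[w]$ whose next symbol lies outside the finite set $F_N$ must have return time $\ge N$, hence carry mass $\le 1/N$) is clean and correct. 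Two small items you should still write out: the $\F$-property is stated for single symbols and needs the one-line extension to words $w$; and shift-invariance of the limit does \emph{not} ``pass to the limit'' at the finitely additive stage, since $\sigma^{-1}C$ is an \emph{infinite} union of cylinders. It is recovered only after countable additivity: Fatou gives $\mu(\sigma^{-1}C)\le\mu(C)$ for every cylinder, and summing over all cylinders of a fixed length (both sums equal $\mu(\Sigma)<\infty$) forces termwise equality.

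The genuine gap is in the density part. Reducing to the existence of a sequence $(\nu_k)_k$ in $\M(\sigma)$ converging on cylinders to the zero measure is the right move (and is unavoidable, since the zero measure must be a limit of probability measures), but your construction of $(\nu_k)_k$ does not work as described. Making a loop longer by ``concatenating with transit loops'' cannot decrease the proportion of time it spends in a finite set $S_j$ below the minimum of that proportion over the constituent loops, so length alone buys nothing; and the parenthetical ``the $\F$-property bounds the visits of short loops to any given finite $S_j$'' is not a property the $\F$-condition gives you --- it bounds the \emph{number} of loops of each length at a base point, not how often a given loop meets a finite set. What is actually needed is: for every finite $F\subseteq S$ and $\e>0$ there is a periodic orbit spending proportion $<\e$ of its period in $\bigcup_{s\in F}[s]$. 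This is true but requires a real argument; within this paper's toolkit it follows from Theorem \ref{cry}, because a transitive CMS with infinite alphabet and the $\F$-property cannot have a finite uniform Rome (finitely many loops of each bounded length through a finite Rome cannot cover infinitely many vertices), and the absence of a finite uniform Rome is equivalent to the existence of sequences in $\M(\sigma)$ converging on cylinders to the zero measure. As written, your density step asserts the conclusion without a mechanism that delivers it, and the full-shift pathology you cite at the end shows this is exactly the kind of point where hand-waving is dangerous.
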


\begin{remark} If $(\Sigma,\sigma)$ has finite topological entropy, then it satisfies the $\F-$property. In particular, Theorem \ref{compact} applies for finite entropy CMS. On the other hand, if $(\Sigma,\sigma)$ does not have the $\F-$property, then $\M_{\le 1}(\sigma)$ is non-compact (see \cite[Proposition 4.19]{iv}).
\end{remark}

Let $C_0(\Sigma)$ be the space of test functions for the cylinder topology. This space is defined as the $C^0-$closure of the set of functions that can be written as a finite sum of characteristic functions of cylinders (for a detailed description, see \cite[Section 3.4]{iv}). For instance, the function $V=\sum_{k\in\N}\frac{1}{k}1_{[k]}$ is in $C_0(\Sigma)$. A sequence $(\mu_n)_n$ in $\M(\sigma)$ converges on cylinders to $\mu \in \M_{\le 1}(\sigma)$ if and only if $\int f d\mu_n \to \int f d\mu$ for every $f \in C_0(\Sigma)$. 

We conclude this subsection with an inequality that will be needed later in the article.

\begin{lemma}\label{tired} Let $\phi\in C_{uc}(\Sigma)$ be a non-negative function. Let $(\mu_n)_n$ be a sequence in $\M(\sigma)$ which converges on cylinders to $\mu\in\M_{\le 1
}(\sigma)$. Then 
$$\liminf_{m\to\infty}\int \phi d\mu_m\ge \int \phi d\mu$$
\end{lemma}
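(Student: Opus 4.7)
The plan is a Fatou-type argument. First, I would reduce to bounded $\phi$ via truncation: for $L>0$, set $\phi^L:=\phi\wedge L$, which is again non-negative and lies in $C_{uc}(\Sigma)$ (truncation is $1$-Lipschitz, so uniform continuity is preserved) and satisfies $\phi^L\le \phi$. Assuming the lemma for $\phi^L$, we get
\[
\liminf_m \int \phi\, d\mu_m \;\ge\; \liminf_m \int \phi^L\, d\mu_m \;\ge\; \int \phi^L\, d\mu,
\]
and monotone convergence as $L\to\infty$ under the sub-probability $\mu$ recovers the full statement.

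For the bounded case, I would approximate $\phi$ from below by finite sums of indicator functions of cylinders. Given $\e>0$, choose $k$ with $\text{var}_k(\phi)<\e$. The cylinders of length $k$ partition $\Sigma$; enumerate them as $(C_j)_{j\in\N}$, set $c_j:=\inf_{C_j}\phi\ge 0$, and consider the finite truncations $f_N:=\sum_{j=1}^N c_j\, 1_{C_j}$. Since $f_N\le \phi$ pointwise, $\int \phi\, d\mu_m \ge \sum_{j=1}^N c_j \mu_m(C_j)$ for every $m$. Taking $\liminf_m$ and using $\mu_m(C_j)\to \mu(C_j)$ in each of the finitely many summands gives $\liminf_m \int \phi\, d\mu_m \ge \sum_{j=1}^N c_j \mu(C_j) = \int f_N\, d\mu$. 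Letting $N\to\infty$, monotone convergence under $\mu$ produces
\[
\liminf_m \int \phi\, d\mu_m \;\ge\; \int \Bigl(\textstyle\sum_{j} c_j\, 1_{C_j}\Bigr) d\mu \;\ge\; \int \phi\, d\mu-\e,
\]
where the last inequality uses $\phi-\sum_j c_j 1_{C_j}\le \text{var}_k(\phi)<\e$ pointwise together with $|\mu|\le 1$. Sending $\e\downarrow 0$ finishes the argument.

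The only real subtlety is that $\phi$ itself need not lie in $C_0(\Sigma)$, so the test-function description of cylinder convergence cannot be invoked in a single step; one interposes the finite truncations $f_N$, which are finite sums of indicator functions of cylinders and are therefore trivially compatible with convergence on cylinders, and then reaches $\phi$ via two successive monotone limits (first $N\to\infty$ under the fixed $\mu$, then $\e\downarrow 0$). The initial truncation step is needed because a general element of $C_{uc}(\Sigma)$ can be unbounded on a single cylinder $[i]$, since each such cylinder contains infinitely many sub-cylinders of length $2$.
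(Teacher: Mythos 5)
Your proof is correct and follows essentially the same route as the paper's: approximate $\phi$ by a locally constant function using $\mathrm{var}_k(\phi)\to 0$, restrict to finitely many cylinders to exploit convergence on cylinders, and then pass to the limit by monotone convergence (the paper uses the supremum over each cylinder where you use the infimum, which is immaterial). Your initial truncation $\phi\wedge L$ is harmless but redundant, since $\mathrm{var}_k(\phi)<\e$ already bounds the pointwise error $\phi-\sum_j c_j 1_{C_j}$ uniformly even when $\phi$ is globally unbounded.
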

\begin{proof}  Since $\phi\in C_{uc}(\Sigma)$, we have that $\text{var}_m(\phi)\to 0$ as $m\to\infty$. Let $m\in\N$ be large enough such that $\text{var}_m(\phi)<\infty$. Define $\phi_m(x)=\sup\{\phi(y):y\in[x_1,\ldots, x_{m}]\}$, where $x=(x_1,x_2,\ldots)$. Note that $\phi_m$ is non-negative, $\text{var}_m(\phi_m)=0$, and $\|\phi-\phi_m\|_0\le \text{var}_m(\phi)$. In particular, $\|\phi-\phi_m\|_0\to 0$ as $m\to\infty$. Let $(C_i^m)_i$ be an enumeration of the set of cylinders of length $m$. Define $A^m_k=\bigcup_{i=1}^k C_i^m$. It follows that 
$$\liminf_{n\to\infty}\int \phi_md\mu_n\ge \lim_{n\to\infty}\int \phi_m 1_{A^m_k}d\mu_n=\int \phi_m 1_{A_k^m}d\mu.$$
Sending $k\to \infty$ we get that $\liminf_{n\to\infty}\int \phi_md\mu_n\ge  \int \phi_md\mu.$ Finally send $m\to\infty$ and use that $|\int \phi d\nu-\int \phi_m d\nu|\le \|\phi-\phi_m\|_0$, for every $\nu\in\M_{\le 1}(\sigma)$.
\end{proof}

\subsection{Entropy at infinity}\label{sec:entinf} Let $(\Sigma,\sigma)$ be a transitive CMS with alphabet $\N$. Fix $a\in \N$. Let $\text{Per}_a(n)=\{x\in[a]:\sigma^n(x)=x\}$. We define the topological entropy (or entropy) of $(\Sigma,\sigma)$ by the formula $h_{top}(\sigma)=\limsup_{n\to\infty} \frac{1}{n}\log \# \text{Per}_a(n),$ which is independent of $a$. Let
$$\text{Per}_a(q,M,n)=\big\{x\in  \text{Per}_a(n): \#\{k\in\{1,...,n\}:x_k\le q\}\le \frac{n}{M}\big\},$$ where $q,M\in\N$. Define
 $$\delta_\infty(M,q)=\limsup_{n\to\infty} \frac{1}{n}\log \# \text{Per}_a(q,M,n), \quad\text{and}\quad \delta_\infty=\inf_{q}\inf_{M} \delta_\infty(M,q).$$
We refer to $\delta_\infty$ as the topological entropy at infinity of $(\Sigma,\sigma)$. It follows from the transitivity of $(\Sigma,\sigma)$ that $\delta_\infty$ is independent of $a$. Note that $\text{Per}(q,M,n)\subseteq \text{Per}(q',M',n)$, whenever $M\ge M'$ and $q\ge q'$. In particular, we have that
$\delta_\infty=\lim_{q\to\infty}\lim_{M\to\infty} \delta_\infty(M,q).$

Define 
$$h_\infty=\sup_{(\mu_n)_n\to 0}\limsup_{n\to\infty}h_{\mu_n}(\sigma),$$
where the supremum runs over all sequences $(\mu_n)_n$ converging on cylinders to the zero measure. If there is no such sequence we set $h_\infty=-\infty$ (we consider this definition in order to be consistent with the definition of the pressure at infinity).  The quantity $h_\infty$ is called the measure theoretic entropy at infinity of $(\Sigma,\sigma)$. It is proved in \cite[Theorem 1.4]{itv} that if $(\Sigma,\sigma)$ has finite  entropy, then $h_\infty=\delta_\infty$. We refer to this property as the variational principle for the entropies at infinity. 

\begin{remark}\label{rem:infent} If $(\Sigma,\sigma)$ has infinite topological entropy, then $\delta_\infty=\infty$ (see \cite[Proposition 8.3]{itv}). If there exists a sequence of invariant probability measures that converges to the zero measure, then $h_\infty=\infty$ (see Lemma \ref{lem:infpre}), otherwise $h_\infty=-\infty$. In the infinite entropy setting it is possible that there is no sequence in $\M(\sigma)$ which converges on cylinders to the zero measure (see \cite[Example 4.17]{iv}). We will discuss this issue in Section \ref{sec:spr}. \end{remark}

The entropy at infinity is related to the upper semi-continuity of the entropy map when escape of mass is allowed, as we can see in the next result. 

\begin{theorem}[{\cite[Theorem 1.1]{itv}}] \label{itv} Let $(\Sigma,\sigma)$ be a transitive CMS with finite entropy.  Let $(\mu_n)_{n}$ be a sequence in $\M(\sigma)$ which converges on cylinders to $\lambda \mu$, where $\mu\in \M(\sigma)$ and $\lambda\in [0,1]$. Then, $$\limsup_{n\to \infty} h_{\mu_n}(\sigma)\le \lambda h_{\mu}(\sigma)+(1-\lambda)\delta_\infty.$$
Moreover, the inequality is sharp.
\end{theorem}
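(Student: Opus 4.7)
The plan is to reduce the claim to the combinatorial content of $\delta_\infty$ by alphabet truncation. For each $q\in\N$ let $T_q = \Sigma\setminus \bigcup_{i=1}^{q}[i]$ and consider the finite partition $\alpha_q = \{[1],\ldots,[q], T_q\}$ of $\Sigma$. Cylinder convergence $\mu_n\to\lambda\mu$ forces $\mu_n(T_q)\to (1-\lambda) + \lambda\mu(T_q)$, so a fraction roughly $1-\lambda$ of the mass of $\mu_n$ escapes into $T_q$ for $n$ large. I would upper-bound $h_{\mu_n}(\sigma)$ via the Kolmogorov--Sinai formula applied to the natural generator $P=\{[i]\}_{i\in\N}$, decompose the computation according to $\alpha_q$-itineraries, and let $q\to\infty$; the question then reduces to controlling the entropy produced by itineraries that spend most of their time inside $T_q$.

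\textbf{Upper bound.} Fix $N,q,M\in\N$ and split the length-$N$ admissible $\alpha_q$-words into \emph{visible} words (at most $N/M$ entries in $T_q$) and \emph{escaped} words (more than $N/M$ entries in $T_q$). Conditioning $\frac{1}{N}H_{\mu_n}\bigl(\bigvee_{k=0}^{N-1}\sigma^{-k}P\bigr)$ on the $\alpha_q$-itinerary, the visible contribution is handled by cylinder convergence: the finitely many atoms of $\bigvee_{k<N}\sigma^{-k}\alpha_q$ intersected with the visible set carry masses under $\mu_n$ that converge to those under $\lambda\mu$, and upper semi-continuity of Shannon entropy on a finite simplex yields, after letting $n\to\infty$, then $N\to\infty$, and then $q\to\infty$, the bound $\lambda h_\mu(\sigma)$. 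For the escaped part, the elementary inequality $H_{\mu_n}(\mathcal{Q})\le \log\#\mathcal{Q}$ applied to the sub-$\sigma$-algebra of escaped words reduces matters to the cardinality of admissible $N$-words that spend more than $N/M$ time in $T_q$. A Katok-style closing argument, using transitivity to return to a fixed cylinder $[a]\subset\bigcup_{i\le q}[i]$, identifies this count (up to a subexponential error) with $\#\emph{Per}_a(q,M,N)$, which by the definition of $\delta_\infty(M,q)$ is at most $\exp(N\delta_\infty(M,q) + o(N))$. Weighting by the escaped mass $\mu_n(\text{escaped})\to 1-\lambda$ and letting $M,q\to\infty$ delivers the $(1-\lambda)\delta_\infty$ contribution.

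\textbf{Sharpness and main obstacle.} Sharpness follows from the variational principle $h_\infty=\delta_\infty$ (available since $h_{top}(\sigma)<\infty$): pick $(\eta_k)_k\subset\M(\sigma)$ with $\eta_k\to 0$ on cylinders and $h_{\eta_k}(\sigma)\to\delta_\infty$, and set $\mu_n = \lambda\mu + (1-\lambda)\eta_n$; affinity of the entropy map on invariant measures and cylinder continuity give equality in the inequality. The main obstacle is the closing step in the upper bound: the combinatorial quantity $\delta_\infty(M,q)$ is defined via periodic orbits rooted at $[a]$, not arbitrary admissible words, so one must argue that $\mu_n$-typical words returning to $[a]$ can be closed up into periodic orbits with essentially the same $T_q$-visit statistics while losing only $o(N)$ in exponential growth. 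The $\F$-property (guaranteed by $h_{top}(\sigma)<\infty$) controls the multiplicity of such closings, and transitivity supplies the return paths from $T_q$ to $[a]$, but orchestrating the four limits $n\to\infty$, $N\to\infty$, $M\to\infty$, $q\to\infty$ so that all error terms vanish in the right order will be the most delicate part of the proof.
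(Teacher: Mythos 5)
The paper does not actually prove Theorem \ref{itv}: it is imported verbatim from \cite[Theorem 1.1]{itv}, so there is no internal proof to compare your proposal against. Judging it on its own terms, your sharpness argument (take $\eta_k\to 0$ on cylinders with $h_{\eta_k}(\sigma)\to\delta_\infty$, set $\mu_n=\lambda\mu+(1-\lambda)\eta_n$, use affinity of entropy) is correct, modulo the fact that it quietly invokes the nontrivial half $h_\infty\ge\delta_\infty$ of the variational principle at infinity, which is itself a separate construction in \cite{itv}. The upper bound, however, has a genuine gap.

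The problem is the visible/escaped dichotomy. Take $\mu_n$ ergodic (this is exactly the hard case; cylinder convergence to $\lambda\mu$ does not let you assume $\mu_n$ splits as a convex combination of a measure near $\mu$ and a measure near zero). By the Birkhoff theorem, $\mu_n$-almost every length-$N$ itinerary visits $T_q$ approximately $\mu_n(T_q)N\approx(1-\lambda)N$ times. So for $0<\lambda<1$ essentially all of the mass sits on words that are neither ``visible'' nor of the type counted by $\text{Per}_a(q,M,N)$, whose defining condition is at most $N/M$ visits to $\{1,\dots,q\}$, i.e.\ at least $(1-1/M)N$ visits to $T_q$. With your threshold (``more than $N/M$ entries in $T_q$'') the escaped class is far too large: for large $M$ it contains essentially every admissible word, its cardinality grows like $e^{Nh_{top}(\sigma)}$ rather than $e^{N\delta_\infty(M,q)}$, and the step $H\le\log\#$ then yields only $\limsup_n h_{\mu_n}(\sigma)\le h_{top}(\sigma)$. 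If you flip the threshold to match the definition of $\text{Per}_a(q,M,N)$, the typical words fall into neither class and the dichotomy says nothing about them. The convex combination $\lambda h_\mu(\sigma)+(1-\lambda)\delta_\infty$ cannot be produced by a single binary split of $N$-words: one has to decompose each individual orbit segment into its maximal excursions into $T_q$ and the complementary blocks in $\{1,\dots,q\}$, bound the number of excursion configurations by a $\delta_\infty$-type count, bound the blocks by the entropy of the finite subalphabet, and then optimize over the time-sharing between the two regimes; that interpolation is the actual content of the proof in \cite{itv} and is absent from your sketch. A secondary issue: the one-cylinder partition $P=\{[i]\}_{i\in\N}$ may satisfy $H_{\mu_n}(P)=\infty$, so the Kolmogorov--Sinai reduction needs more care than stated.
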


\subsection{Thermodynamic formalism}\label{introcms} Let $(\Sigma,\sigma)$ be a transitive CMS. Fix $a\in S$. 
For a potential $\phi:\Sigma\to \R,$ we define $$Z_n(\phi,a)=\sum_{\sigma^n x=x} e^{S_n \phi(x)}1_{[a]}(x), \text{ and } P^{top}(\phi,a)=\limsup_{n\to \infty} \frac{1}{n}\log Z_n(\phi,a).$$
If $P^{top}(\phi, a)$ is independent of $a \in S$, we refer to this value as the topological pressure, or Gurevich pressure, of $\phi$. The topological pressure of $\phi$ is denoted by $P^{top}(\phi)$. If $\phi$ has summable variations, then the topological pressure is well defined. Furthermore, if $(\Sigma,\sigma)$ is  mixing then the limsup is a limit (see \cite[Theorem 1]{sa1}). Note that the topological pressure of the zero potential is the topological entropy of $(\Sigma,\sigma)$.

Define
 $$Z_n^*(\phi,a)=\sum_{\sigma^nx=x} e^{S_n\phi(x)}1_{\psi_a=n}(x),\text{ and } P^*(\phi,a)=\limsup_{n\to\infty}\frac{1}{n}\log Z_n^*(\phi,a),$$ where $\psi_a(x)=1_{[a]}(x)\inf\{n\ge1:\sigma^n x\in [a]\}$. Extending notions studied for Markov chains, Sarig \cite{sa1} classified potentials according to their recurrence properties.
\begin{definition} Let $(\Sigma,\sigma)$ be a transitive CMS and $\phi$ a  potential with summable variations and finite pressure. Set $\lambda=\exp P^{top}(\phi)$.
\begin{enumerate}
\item If $\sum_{n\ge 1}\lambda^{-n}Z_n(\phi,a)$ diverges we say that $\phi$ is {recurrent}.
\item If $\sum_{n\ge 1}\lambda^{-n}Z_n(\phi,a)$ converges we say that $\phi$ is {transient}.
\item If $\phi$ is recurrent and $\sum_{n\ge 1}n\lambda^{-n}Z^*_n(\phi,a)$ converges we say that $\phi$ is {positive recurrent}.
\item If $\phi$ is recurrent and $\sum_{n\ge 1}n\lambda^{-n}Z^*_n(\phi,a)$ diverges we say that $\phi$ is {null recurrent}.
\end{enumerate}
\end{definition}

The transfer operator $L_\phi$ associated with $\phi$ is given by $$L_\phi f(x)=\sum_{\sigma y=x} e^{\phi(y)}f(y),$$ which also induces a dual operator $L_\phi^*$ acting on the space of Borel measures. Sarig \cite{sa1,sa3} proved that $\phi$ is recurrent if and only if there exists a positive continuous function $h$ and a conservative measure $\nu$ finite and positive on cylinders such that $L_\phi h=e^{P^{top}(\phi)}h,$ and $L^*_\phi \nu=e^{P^{top}(\phi)}\nu$.  The measure $hd\nu$ is the Ruelle-Perron-Frobenius measure (RPF measure) associated to $\phi$.  Moreover, $\phi$ is positive recurrent if and only if $hd\nu$ is a finite measure. In this case we define $\mu_\phi=(\int hd\nu)^{-1}hd\nu$. If $\phi$ is positive recurrent and $\int \phi d\mu_\phi>-\infty$, then $\mu_\phi$ is an equilibrium state of $\phi$. A fundamental result in this setting is that a potential with summable variations has at most one equilibrium state (see \cite[Theorem 1.2]{bs}). On the other hand, if $\phi$ has an equilibrium state, then $\phi$ is positive recurrent and $\mu_\phi$ is the unique equilibrium state of $\phi$. 

\begin{example}\label{ex:full} Let $(\N^\N,\sigma)$ be the full shift on a countable alphabet and $\phi:\N^\N\to\R$ a potential such that $\text{var}_1(\phi)=0$. In this case $\phi(x)=a(x_1)$, where $x_1$ is the first coordinate of $x\in \N^\N$, and $a:\N\to\R$ is a function. Then,
$P(\phi)=\log\big(\sum_{i\in \N}e^{a(i)}\big).$
\end{example}

\subsubsection{Variational principle} The measure-theoretic pressure of $\phi$ is defined by the formula
\begin{align*} P(\phi)=\sup_{\mu\in \M_\phi(\sigma)}\bigg(h_\mu(\sigma)+\int \phi d\mu\bigg),\end{align*}
where $\M_\phi(\sigma)=\{\mu\in\M(\sigma): \int \phi d\mu>-\infty\}$. If $(\Sigma,\sigma)$ is mixing and $\phi$ has summable variations, then $P^{top}(\phi)=P(\phi)$ (see \cite[Theorem 3]{sa1} and \cite[Theorem 2.10]{ijt}). This statement is known as the variational principle for the pressure. In the following remark, we discuss how to deduce the variational principle for transitive CMS from the mixing case, both for completeness and for later use.

\begin{remark}\label{mixtotra}Let $(\Sigma, \sigma)$ be a transitive CMS with alphabet $S$ and period $p$, that is, $p$ is the greatest common divisor of the set of periods of periodic orbits in $\Sigma$. Assume $p>1$ (a CMS is mixing if and only if $p=1$). Let us define an equivalence relation on $S$. We say that $a\sim b$ if $a=b$, or if there exists an admissible word  $aq_1\ldots q_{n-1} b,$ where $n$ is a multiple of $p$.  There are $p$ equivalence classes $S_1,\ldots, S_p$ in the alphabet $S$. Define $\Sigma_i=\{x\in\Sigma: x_1\in S_i\}$. It follows that $\Sigma=\bigcup_{i=1}^p \Sigma_i$, $\sigma(\Sigma_i)=\Sigma_{i+1},$ and $\sigma^p(\Sigma_i)=\Sigma_i$ (maybe after reordering the equivalent classes). The dynamical system $(\Sigma_i,\sigma^p)$ is a CMS whose alphabet is given by the set of admissible words $\{\tilde{a}=a_1\ldots a_{p}: a_1\in S_i\}$, where $\tilde{a}\tilde{b}$ is admissible if and only if $a_{p}b_1$ is admissible in $\Sigma$.  The period of  $(\Sigma_i,\sigma^p)$ is equal to one, in other words, it is mixing. The partition $\{\Sigma_i\}_{i=1}^p$ is called the spectral decomposition of $\Sigma$.

Note that if we induce on $\Sigma_1$, the first return time is constant and equal to $p$. Let $\overline{\phi}=S_p\phi$, be the  induced potential on $\Sigma_1$, and for   $\mu\in \M(\sigma)$, we set $\bar{\mu}(\cdot)=\mu(\Sigma_1\cap \cdot)/\mu(\Sigma_1)$ (note that $\mu(\Sigma_1)> 0$, for every $\mu\in\M(\sigma)$). The map $\mu\mapsto\bar{\mu}$ is a bijection between $\M(\sigma)$ and $\M(\Sigma_1,\sigma^p)$ with  inverse  $\nu\mapsto \frac{1}{p}\sum_{i=0}^{p-1}\sigma_*^i\nu$. It follows by Kac's and Abramov's formula that $h_{\bar{\mu}}(\sigma^p)+\int \overline{\phi}d\bar{\mu}=p(h_\mu(\sigma)+\int \phi d\mu)$, for every $\mu\in \M(\sigma)$. In particular $P(\overline{\phi})=pP(\phi)$. On the other hand, $Z_n(\phi,a)$ is different from zero only if $n$ is multiple of $p$, therefore 
$$P^{top}(\phi)=\limsup_{k\to\infty}\frac{1}{kp}\log Z_{kp}(\phi,a).$$
Choose $a\in S_1$, then
$$P^{top}(\phi)=\limsup_{k\to\infty}\frac{1}{kp}\log Z_{kp}(\phi,a)=\frac{1}{p}\limsup_{k\to\infty}\frac{1}{k}\log Z_{k}(\overline{\phi},a)=\frac{1}{p}P^{top}(\overline{\phi}).$$
Finally, since $(\Sigma_1,\sigma^p)$ is mixing we have that $P(\overline{\phi})=P^{top}(\overline{\phi})$, and therefore $P(\phi)=P^{top}(\phi)$.
\end{remark}

\begin{remark}
Observe  that $|P^{top}(\phi,a)-P^{top}(\phi',a)|\le \|\phi-\phi'\|_0$, and that $|P(\phi)-P(\phi')|\le \|\phi-\phi'\|_0$. In particular, the pressure functions $\phi\mapsto P(\phi)$ and $\phi\mapsto P^{top}(\phi,a)$ are continuous with respect to the $C^0-$norm. Since potentials with summable variations are dense in $\H$, we conclude that the topological pressure is well defined for potentials in $\H$ and that the variational principle still holds. 
\end{remark}

\subsection{A recoding}\label{recode}  Let $(\Sigma,\sigma)$ be a CMS with alphabet $S$ and transition matrix $M$.  In this subsection, we describe a recoding of $(\Sigma,\sigma)$, which will be used in subsection \ref{subsec:51}. Fix $m\in \N$. Define $S_m=\{(a_1,\ldots,a_m)\in S^m: \prod_{i=1}^{m-1}M_{a_i,a_{i+1}}=1\},$ and the $S_m\times S_m$ incidence matrix $M_m$, given by $(M_m)_{(a_1,\ldots,a_m),(b_1,...,b_m)}=1$, if $a_{i+1}=b_i$, for all $1\le i\le m-1$, and zero otherwise. The CMS with alphabet $S_m$ and transition matrix $M_m$ is denoted by $(\Sigma_m,\sigma_m)$.

 Let $\pi_m:\Sigma_m\to \Sigma$ be the canonical map that sends 
 $x=((x_1,\ldots,x_{m}),(x_2,\ldots,x_{m+1}),\ldots)$ to $\pi_m(x)=(x_1,x_2,\ldots)$.  It follows by construction of $\Sigma_m$ that $\pi_m$ is a bijection and that $\pi_m\circ \sigma_m=\sigma\circ \pi_m$. Observe  that
\begin{align}\label{pim}\pi_m([(a_1,\ldots,a_m),(a_2,\ldots,a_{m+1}),\ldots,(a_{k},\ldots,a_{k+m-1})])=[a_1,\ldots,a_{k+m-1}],\end{align}
for every $k\in \N$. Note that $\pi_m$ maps a basis of the topology of $\Sigma_m$ to a basis of the topology of $\Sigma$. In particular, $\pi_m$ is a homeomorphism and then $\pi_m$ is a topological conjugacy between $(\Sigma_m,\sigma_m)$ and $(\Sigma,\sigma)$. It follows that the push forward $(\pi_m)_*:\M(\sigma_m)\to \M(\sigma)$ is a homeomorphism with respect to the weak$^*$ topologies. The map $(\pi_m)_*$ induces a bijection between  $\M_{\le 1}(\sigma_m)$ and $\M_{\le1}(\sigma)$, which is not necessarily a homeomorphism with respect to the cylinder topologies (see Example \ref{counter}). For $\mu\in \M_{\le1}(\sigma),$ we set 
$\tilde{\mu}=(\pi_m)_*^{-1}\mu\in \M_{\le 1}(\sigma_m)$.

\begin{remark}\label{rem:topm} Note that $\pi_m$ maps cylinders of length $k$ in $\Sigma_m$ to cylinders of length $m+k-1$ in $\Sigma$. In particular, if $(\mu_n)_n$ is a sequence in $\M(\sigma)$ that converges on cylinders to $\mu\in\M_{\le1}(\sigma)$, then $(\tilde{\mu}_n)_n$ converges on cylinders to $\tilde{\mu}$. On the other hand, if $(\tilde{\mu}_n)_n$ is a sequence in $\M(\sigma_m)$  that converges on cylinders to $\tilde{\mu}\in\M_{\le1}(\sigma_m)$, then $\lim_{n\to\infty}\mu_n(C)=\mu(C)$, for all cylinder $C\subseteq \Sigma$ of length greater than $m$.
\end{remark}

\begin{example}\label{counter} Consider the full shift $(\N^\N,\sigma)$ and the periodic points $p_n=\overline{1n}$. Each periodic point $p_n$ defines a periodic measures $\mu_n=\frac{1}{2}(\delta_{p_n}+\delta_{\sigma(p_n)})$. Observe that $(\mu_n)_n$ satisfies that $\mu_n([1])=\frac{1}{2}$, and  $\lim_{n\to\infty}\mu_n([a,b])=0$, for every $a,b\in \N$. Consider the construction above for $m=2$, and note that the sequence $(\tilde{\mu}_n)_n$ converges on cylinders to the zero measure, but $(\mu_n)_n$ does not converge to a countably additive measure. In Remark \ref{rem:preclaim}, we provide a condition that rules out this type of behavior.
\end{example}

\section{Pressure at infinity and the variational principle}\label{secpre}
In this section, we study the pressure at infinity for potentials defined on countable Markov shifts. We consider two versions of the pressure at infinity: one based on measure-theoretic data and the other on topological data. We will prove Theorem \ref{varpriplus}, which states that both versions coincide for potentials with summable variations and finite pressure. To prove the variational principle for the pressures at infinity, we first derive a formula for the topological pressure at infinity in terms of the topological pressure (see Proposition \ref{inf}). Finally, in the last subsection, we introduce some definitions and prove several useful results needed in later sections.

\subsection{Topological pressure at infinity}\label{subsec:presinf}
Let $(\Sigma,\sigma)$ be a transitive CMS with alphabet $\N$. Fix $a\in \N$. Recall that $\text{Per}_a(n)=\{x\in\Sigma:x_1=a,\text{ and }\sigma^n(x)=x\}$, and 
$\text{Per}_a(q,M,n)=\{x\in  \text{Per}_a(n): \#\{k\in\{1,...,n\}:x_k\le q\}\le \frac{n}{M}\},$ where $q,M\in\N$.

\begin{definition} Given a potential $\phi:\Sigma\to\R,$ and $q,M\in\N$, we define $$Z_n(\phi,a; q,M)=\sum_{x\in \text{Per}_a(q,M,n)}\exp(S_n\phi(x)).$$
 If $\text{Per}_a(q,M,n)=\emptyset$, we set $Z_n(\phi,a;q,M)=0$.  Define
$$P^{top}_\infty(\phi,a;q,M)=\limsup_{n\to\infty} \frac{1}{n}\log Z_n(\phi,a;q,M),\quad\text{and}\quad P^{top}_\infty(\phi,a)=\inf_q\inf_M P_\infty(\phi,a;q,M).$$
If $P^{top}_\infty(\phi,a)$ is independent of $a,$ we refer to this value as the topological pressure at infinity of $\phi$, which we denote by $P^{top}_\infty(\phi)$.
\end{definition}

Since $\text{Per}_a(q,M,n)\subseteq \text{Per}_a(q',M',n)$, whenever $M\ge M'$ and $q\ge q'$, it follows that
$$P_\infty(\phi,a)=\lim_{q\to\infty}\lim_{M\to\infty} P_\infty(\phi,a;q,M).$$

In the next proposition we establish a useful formula for the topological pressure at infinity of a potential. Let $\text{Per}_a(q,M,n)^c=\text{Per}_a(n)\setminus \text{Per}_a(q,M,n).$ 

\begin{proposition}\label{inf} Let $\phi:\Sigma\to\R$ be a potential with summable variations. Let $V=\sum_{k=1}^\infty \frac{1}{k} 1_{[k]}\in C_0(\Sigma)$. Then, $$P^{top}_\infty(\phi,a)=\lim_{t\to\infty} P(\phi-tV).$$
\end{proposition}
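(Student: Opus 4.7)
The approach exploits that $V(x) = 1/x_1$ is a bounded ($0 \le V \le 1$), almost--locally-constant potential (zero variations past the first coordinate) which is small on cylinders indexed by large symbols and of order one on those indexed by small symbols. Consequently, $-tV$ penalises orbits that spend a non-negligible fraction of time on low-index symbols, and for large $t$ only orbits ``at infinity'' contribute substantially to the partition function. Since $V \ge 0$, $t \mapsto P(\phi - tV)$ is non-increasing, so the limit in the statement exists in $[-\infty, P(\phi)]$. Moreover $\phi - tV$ still has summable variations, and Remark \ref{mixtotra} gives $P(\phi - tV) = P^{top}(\phi - tV, a)$, so the plan is to work entirely with periodic-orbit partition sums.

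The technical heart is a dichotomy for the Birkhoff sum $S_nV(x) = \sum_{k=1}^{n} 1/x_k$ of a point $x \in \text{Per}_a(n)$. Writing $N_q(x) = \#\{k \in \{1, \dots, n\} : x_k \le q\}$ and splitting the sum according to whether $x_k \le q$ or not gives
\[
\frac{N_q(x)}{q} \,\le\, S_nV(x) \,\le\, N_q(x) + \frac{n - N_q(x)}{q+1}.
\]
For $x \in \text{Per}_a(q,M,n)$, where $N_q(x) \le n/M$, this yields $S_nV(x) \le n\bigl(\tfrac{1}{M} + \tfrac{1}{q+1}\bigr)$; for $x \notin \text{Per}_a(q,M,n)$ it yields $S_nV(x) > n/(Mq)$.

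For the lower bound, I restrict $Z_n(\phi - tV, a)$ to $\text{Per}_a(q,M,n)$ and apply the upper estimate:
\[
Z_n(\phi - tV, a) \,\ge\, e^{-tn(1/M + 1/(q+1))}\, Z_n(\phi, a; q, M),
\]
so $P(\phi - tV) \ge P^{top}_\infty(\phi,a;q,M) - t(1/M + 1/(q+1)) \ge P^{top}_\infty(\phi,a) - t(1/M + 1/(q+1))$. Letting $M \to \infty$ and then $q \to \infty$ gives $P(\phi - tV) \ge P^{top}_\infty(\phi,a)$ for every $t$, and hence the same bound for the limit.

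For the upper bound, I split the full sum, use $e^{-tS_nV} \le 1$ on $\text{Per}_a(q,M,n)$, and use the lower dichotomy estimate on its complement, obtaining
\[
Z_n(\phi - tV, a) \,\le\, Z_n(\phi, a; q, M) + e^{-tn/(Mq)}\, Z_n(\phi, a).
\]
Taking $\limsup_n \tfrac{1}{n}\log$ and using $\log(u+v) \le \log 2 + \max(\log u, \log v)$,
\[
P(\phi - tV) \,\le\, \max\bigl\{\, P^{top}_\infty(\phi, a; q, M),\ P(\phi) - t/(Mq)\, \bigr\}.
\]
Given $\varepsilon > 0$, first choose $q, M$ with $P^{top}_\infty(\phi,a;q,M) \le P^{top}_\infty(\phi,a) + \varepsilon$, then pick $t$ so large that $P(\phi) - t/(Mq) \le P^{top}_\infty(\phi,a) + \varepsilon$; monotonicity in $t$ then yields $\lim_{t\to\infty} P(\phi - tV) \le P^{top}_\infty(\phi,a) + \varepsilon$, and sending $\varepsilon \to 0$ closes the argument. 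As a byproduct, $P^{top}_\infty(\phi,a)$ is shown to be independent of $a$. The delicate point is precisely this upper bound: it relies on $P(\phi)$ being finite in order to absorb the term $P(\phi) - t/(Mq)$ by sending $t \to \infty$; this is consistent with the finite-pressure setting in which the proposition is applied.
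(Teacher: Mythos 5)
Your proof is correct and follows essentially the same route as the paper: both rest on the dichotomy that $S_nV(x)\le n(1/M+1/q)$ on $\text{Per}_a(q,M,n)$ and $S_nV(x)>n/(Mq)$ on its complement, giving the lower bound by restricting the partition sum and the upper bound by splitting it and using finiteness of $P(\phi)$ to absorb the complement term as $t\to\infty$. The only cosmetic difference is that the paper packages the lower bound as the identity $P^{top}_\infty(\phi-tV,a)=P^{top}_\infty(\phi,a)$ combined with $P^{top}_\infty\le P$, while you bound $Z_n(\phi-tV,a)$ from below directly.
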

\begin{proof} We will first prove that $P^{top}_\infty(\phi,a)=P^{top}_\infty(\phi-tV,a)$, for every $t\ge 0$. Observe that if $x\in \text{Per}_a(q,M,n)$, then $S_nV(x)\le \frac{n}{M}+\frac{n}{q}$. It follows that,
$$Z_n(\phi-tV,a;q,M)\ge Z_n(\phi,a;q,M)\exp\bigg(-nt\bigg(\frac{1}{M}+\frac{1}{q}\bigg)\bigg),$$
and therefore,
$$P^{top}_\infty(\phi-tV,a;q,M)\ge P^{top}_\infty(\phi,a;q,M)-t\bigg(\frac{1}{M}+\frac{1}{q}\bigg).$$
Sending $q$ and $M$ to infinity we obtain $P^{top}_\infty(\phi-tV,a)\ge P^{top}_\infty(\phi,a)$. The other inequality follows from $\phi\ge \phi-tV$. We conclude that $P_\infty^{top}(\phi,a)=P_\infty^{top}(\phi-tV,a)$. 

Since $P^{top}_\infty(\phi,a)=P^{top}_\infty(\phi-tV,a)\le P(\phi-tV)$, we have $P^{top}_\infty(\phi,a)\le \lim_{t\to\infty}P(\phi-tV)$. In order to complete the proof, we need to show that  $\lim_{t\to\infty}P(\phi-tV)\le P^{top}_\infty(\phi,a)$. 

Fix $q,M\in\N$. If $x\in \text{Per}_a^c(q,M,n)$, then $\#\{1\le k\le n:x_k\le q\}> \frac{n}{M}$. In particular, $S_nV(x)> \frac{n}{Mq}.$ Then,
$$\sum_{x\in \text{Per}_a^c(q,M,n)}\exp(S_n(\phi-tV)(x))\le \exp\bigg(-\frac{tn}{Mq}\bigg)\sum_{x\in \text{Per}_a^c(q,M,n)}\exp(S_n\phi(x)).$$
%$$\sum_{x\in Per^c(q,M,n)}\exp(F_n(x)-tV_n(x))\le \#Per^c(q,M,n)\exp(n(||F||_0-\frac{t}{Mq})).$$
By definition of the pressure, there exists $n_0\in\N$ such that if $n\ge n_0,$ then \\$\sum_{x\in \text{Per}_a^c(q,M,n)}\exp(S_n\phi(x))\le \exp(n(P(\phi)+1)).$ Therefore,
%$$\#Per^c(q,M,n)\le \exp(n(h_{top}(\Sigma,\sigma)+1)).$$ In particular 
\begin{align*}\sum_{x\in \text{Per}_a^c(q,M,n)}\exp(S_n(\phi-tV)(x))\le \exp\bigg(n\bigg(P(\phi)+1-\frac{t}{Mq}\bigg)\bigg),\end{align*}
%for every $n\ge n_0$. Choose $t=t(q,M)>(Mq)(h_{top}(\Sigma,\sigma)+2+||F||_0)$. In this situation we have that
for every $n\ge n_0$. Finally, for $n\ge n_0$ and $t>qM(P(\phi)+1+\kappa)$, we have
\begin{align*} Z_n(\phi-tV,a)&=\sum_{x\in \text{Per}_a(q,M,n)}\exp(S_n(\phi-tV)(x))+\sum_{x\in \text{Per}_a^c(q,M,n)}\exp(S_n(\phi-tV)(x))\\
&\le \sum_{x\in \text{Per}_a(q,M,n)}\exp(S_n\phi(x))+e^{-\kappa n},
 \end{align*}
for every $\kappa>0$. We conclude that $P(\phi-tV)\le \max\{P^{top}_\infty(\phi,a;q,M),-\kappa\}$, and therefore  $\lim_{t\to\infty}P(\phi-tV)\le P^{top}_\infty(\phi,a;q,M)$. 
\end{proof}

\begin{remark} It follows from Proposition \ref{inf} that if $\phi:\Sigma\to\R$ has summable variations, then $P^{top}_\infty(\phi,a)$ is independent of $a\in\N$. In particular, $P^{top}_\infty(\phi)$ is well defined. This can also be proved directly, similar to the proof of the independence of the base symbol for the topological pressure.
\end{remark}

\subsection{Measure-theoretic pressure at infinity and the variational principle}\label{sub:var}

In the introduction, we defined the {measure-theoretic pressure at infinity} as
\begin{align*}P_\infty(\phi)=\sup_{(\mu_n)_n\to 0}\limsup_{n\to\infty}\bigg(h_{\mu_n}(\sigma)+\int \phi d\mu_n\bigg),\end{align*}
where the supremum runs over all sequences $(\mu_n)_n$ in $\M_{\phi}(\sigma)$ which converge on cylinders to the zero measure. If there is no such sequence, we set $P_\infty(\phi)=-\infty$. The next two results state some basic properties of the measure-theoretic pressure at infinity.

\begin{lemma}\label{convex}  The following holds:
\begin{enumerate}
\item $P_\infty(\phi )\le P(\phi )$
\item $P_\infty(\phi +c)=P_\infty(\phi )+c$, for every $c\in \R$. 
\item If $\phi \le \psi$, then $P_\infty(\phi )\le P_\infty(\psi)$.
\item $|P_\infty(\phi )-P_\infty(\psi)|\le \|\phi -\psi\|_0$.
\item The function $t\mapsto P_\infty(t\phi )$ is convex whenever finite.
\item $P_\infty(\phi +g)=P_\infty(\phi )$, whenever $g\in C_0(\Sigma)$. 
\end{enumerate}
\end{lemma}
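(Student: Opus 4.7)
The plan is to handle the six items in order, reducing each to an elementary property of integration, the variational principle for $P(\phi)$, or the defining feature of the cylinder topology. None of the items is deep; the only care needed is to keep track of the admissible class of sequences — i.e.\ sequences in $\M_\phi(\sigma)$ converging on cylinders to the zero measure — as we modify $\phi$.

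For (1), every sequence $(\mu_n)_n$ admissible for $P_\infty(\phi)$ lies in $\M_\phi(\sigma)$, so the variational principle gives $h_{\mu_n}(\sigma)+\int\phi d\mu_n\le P(\phi)$ for each $n$; taking $\limsup$ and then $\sup$ yields (1). Item (2) is immediate because $\int c\,d\mu_n=c$ for any probability measure, so $h_{\mu_n}+\int(\phi+c)d\mu_n=h_{\mu_n}+\int\phi d\mu_n+c$ termwise, and $\M_{\phi+c}(\sigma)=\M_\phi(\sigma)$. Item (3) follows from $\int\phi d\mu_n\le\int\psi d\mu_n$ applied inside the $\limsup$ and $\sup$; here we use that $\M_\phi(\sigma)\subseteq\M_\psi(\sigma)$ when $\phi\le\psi$, so any sequence admissible for $P_\infty(\phi)$ is also admissible for $P_\infty(\psi)$. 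Item (4) is vacuous if $\|\phi-\psi\|_0=\infty$; otherwise $\psi-\phi$ is bounded, so $\M_\phi(\sigma)=\M_\psi(\sigma)$, and $|\int\phi d\mu_n-\int\psi d\mu_n|\le\|\phi-\psi\|_0$ for every probability measure $\mu_n$, from which (4) is immediate.

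For (5), fix a sequence $(\mu_n)_n\subset\M_\phi(\sigma)$ converging on cylinders to the zero measure. The map $t\mapsto h_{\mu_n}(\sigma)+t\int\phi d\mu_n$ is affine, hence convex, and the sequence lies in $\M_{t\phi}(\sigma)$ for every $t$ simultaneously since $\int\phi d\mu_n$ is finite. Thus the $\limsup$ in $n$ is a pointwise $\limsup$ of convex functions of $t$, which is convex on the set where it is finite, and the supremum over admissible sequences preserves convexity. This gives convexity of $t\mapsto P_\infty(t\phi)$ on any subinterval where it is finite.

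Finally, for (6), any $g\in C_0(\Sigma)$ is bounded, so $\M_{\phi+g}(\sigma)=\M_\phi(\sigma)$ and the two functionals are defined on the same class of sequences. By the defining property of the cylinder topology, $g$ is a test function: $\int g\,d\mu_n\to 0$ whenever $(\mu_n)_n$ converges on cylinders to the zero measure. Consequently
\[
\limsup_{n\to\infty}\Bigl(h_{\mu_n}(\sigma)+\int(\phi+g)d\mu_n\Bigr)=\limsup_{n\to\infty}\Bigl(h_{\mu_n}(\sigma)+\int\phi d\mu_n\Bigr),
\]
and taking the supremum over admissible sequences gives $P_\infty(\phi+g)=P_\infty(\phi)$. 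The only technical step to watch throughout is the verification that admissible classes match after each operation; for items (2), (4) and (6) this uses only boundedness of the added or compared potential, and for (5) it uses that $\int\phi d\mu_n$ being finite is a condition on $\phi$, not on $t\phi$.
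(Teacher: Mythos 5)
Your proof is correct and follows essentially the same route as the paper, which dispatches items (1)--(5) as immediate consequences of the definition and proves (6) exactly as you do, by noting that $\lim_{n\to\infty}\int g\,d\mu_n=0$ for every $g\in C_0(\Sigma)$ along sequences converging on cylinders to the zero measure. Your added bookkeeping about the admissible classes $\M_\phi(\sigma)$ and the convexity of a $\limsup$ of affine functions is a sound elaboration of details the paper leaves implicit.
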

\begin{proof} Parts (1)--(5) follow easily from the definition. For (6) note that $\lim_{n\to\infty} \int gd\mu_n=0$, for every sequence $(\mu_n)_n$ that converges on cylinders to the zero measure. 
\end{proof}

\begin{lemma}\label{lem:infpre} Suppose that there exists a sequence in $\M(\sigma)$ which converges on cylinders to the zero measure. Let $\phi\in C(\Sigma)$ be a potential such that $P(\phi)=\infty$. Then, $P_\infty(\phi)=\infty$. 
\end{lemma}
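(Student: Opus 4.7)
The strategy is to construct a sequence $(\mu_k)_k$ in $\M_\phi(\sigma)$ that converges on cylinders to the zero measure and along which $h_{\mu_k}(\sigma) + \int \phi\, d\mu_k \to \infty$. The construction takes convex combinations of the hypothesized escaping sequence with measures of very large pressure (available since $P(\phi)=\infty$).

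Concretely, let $(\eta_n)_n \subset \M(\sigma)$ be the escaping sequence given by the hypothesis, which we assume lies in $\M_\phi(\sigma)$ (see the obstacle below). Fix an enumeration $(C_j)_j$ of the cylinders of $\Sigma$. Using $\eta_n(C)\to 0$ for each cylinder $C$, extract via a diagonal argument indices $n_k\to\infty$ with $\eta_{n_k}(C_j)\le 1/k$ for all $j\le k$, and set $B_k := |\int \phi\, d\eta_{n_k}|<\infty$. Since $P(\phi)=\infty$, we may choose $\nu_k \in \M_\phi(\sigma)$ with $h_{\nu_k}(\sigma)+\int \phi\, d\nu_k \ge k(B_k+k)$, and define
$$\mu_k := \Bigl(1-\tfrac{1}{k}\Bigr)\eta_{n_k} + \tfrac{1}{k}\,\nu_k.$$
This is a probability measure, and linearity of the integral gives $\mu_k \in \M_\phi(\sigma)$. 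For any fixed cylinder $C=C_j$, once $k\ge j$ we have $\mu_k(C)\le \eta_{n_k}(C_j) + 1/k \le 2/k \to 0$, so $\mu_k \to 0$ on cylinders. By the affinity of the entropy map on probability measures and $h_{\eta_{n_k}}(\sigma)\ge 0$,
$$h_{\mu_k}(\sigma)+\int \phi\, d\mu_k = \Bigl(1-\tfrac{1}{k}\Bigr)\Bigl(h_{\eta_{n_k}}(\sigma)+\int \phi\, d\eta_{n_k}\Bigr) + \tfrac{1}{k}\Bigl(h_{\nu_k}(\sigma)+\int \phi\, d\nu_k\Bigr) \ge -B_k + (B_k+k) = k,$$
so $P_\infty(\phi)\ge \limsup_k \bigl(h_{\mu_k}(\sigma)+\int \phi\, d\mu_k\bigr)=\infty$.

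The main obstacle is the assumption that the escaping sequence $(\eta_n)_n$ can be taken inside $\M_\phi(\sigma)$; without this, the convex combinations $\mu_k$ may fail to integrate $\phi$. When the given sequence does not have this property, one replaces it by an escaping sequence of periodic orbit measures, for which $\int \phi\, d\eta_n$ is automatically finite for any $\phi \in C(\Sigma)$; the existence of such a sequence whenever \emph{any} escaping sequence exists is exactly the kind of fact delivered by the combinatorial characterization developed later in Section \ref{sec:spr}. Once this replacement is in place, the convex combination argument above delivers $P_\infty(\phi)=\infty$.
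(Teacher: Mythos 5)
Your proposal is correct and is essentially the paper's argument: a convex combination $(1-\tfrac1k)\eta_{n_k}+\tfrac1k\nu_k$ of an escaping sequence with measures of large free energy, using affinity of entropy (your choice of $\nu_k$ with $h_{\nu_k}(\sigma)+\int\phi\,d\nu_k\ge k(B_k+k)$, calibrated to $B_k=|\int\phi\,d\eta_{n_k}|$, is in fact the careful way to make the estimate close). The one point you defer to Section \ref{sec:spr} is handled more directly in the paper: periodic measures are dense in $\M(\sigma)$ for the cylinder topology, so any escaping sequence can be replaced by an escaping sequence of periodic measures, which lie in $\M_\phi(\sigma)$ for every $\phi\in C(\Sigma)$ since they are supported on finite orbits.
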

\begin{proof}
Since periodic measures are dense in $\M(\sigma)$, there exists a sequence $(\nu_n)_n$  in $\M_\phi(\sigma)$ that converges on cylinders to the zero measure. By the variational principle, there exists a sequence $(\mu_n)_n$ in $\M_\phi(\sigma)$ such that $h_{\mu_n}(\sigma)+\int \phi d\mu_n\ge n^2$. Define $\eta_n=(1-\frac{1}{n})\nu_n+\frac{1}{n}\mu_n$. The sequence $(\eta_n)_n$ converges to the zero measure and satisfies that $h_{\eta_n}(\sigma)+\int \phi d\eta_n\ge n$.
\end{proof}

The next proposition is the key ingredient in order to prove the variational principle for the pressures at infinity. 

\begin{proposition}\label{presinf} Let $\phi:\Sigma\to\R$ be a potential with summable variations and finite pressure. Let $(\mu_n)_n$ be a sequence in $\M_\phi(\sigma)$ which converges on cylinders to the zero measure. Then, \begin{align}\label{y1}\limsup_{n\to\infty} \bigg(h_{\mu_n}(\sigma)+\int \phi d\mu_n\bigg)\le P^{top}_\infty(\phi ).\end{align}
If $P^{top}_\infty(\phi )>-\infty$, then there exists a sequence of invariant probability measures, as described above, for which equality holds.
\end{proposition}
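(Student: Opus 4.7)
The plan is to combine Proposition \ref{inf}, which identifies $P^{top}_\infty(\phi)$ with $\lim_{t\to\infty} P(\phi - tV)$, with the variational principle for potentials with summable variations. Three properties of the test function $V=\sum_{k\ge 1}\tfrac{1}{k}1_{[k]}$ are used throughout: $V\in C_0(\Sigma)$ takes values in $[0,1]$, $V$ depends only on the first coordinate (so $\text{var}_k(V)=0$ for $k\ge 1$ and $\phi - tV$ retains summable variations), and $V\ge \tfrac{1}{k}1_{[k]}$ for each $k$. Since $V$ is bounded, $\M_{\phi - tV}(\sigma) = \M_\phi(\sigma)$ and $P(\phi - tV)\le P(\phi)<\infty$ for every $t\ge 0$.

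For the upper bound \eqref{y1}, I would fix $t\ge 0$ and apply the variational principle to $\phi - tV$, which yields
\[h_{\mu_n}(\sigma) + \int \phi\,d\mu_n \;\le\; P(\phi - tV) + t\int V\,d\mu_n.\]
Since $V\in C_0(\Sigma)$ and $\mu_n\to 0$ on cylinders, $\int V\,d\mu_n\to 0$. Taking $\limsup$ in $n$ and then $t\to\infty$, Proposition \ref{inf} gives \eqref{y1}.

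For the matching construction, assume $P^{top}_\infty(\phi)>-\infty$, so $P(\phi - tV)$ is finite for every $t\ge 0$. By the variational principle applied to $\phi - nV$, pick $\mu_n\in\M_\phi(\sigma)$ with
\[h_{\mu_n}(\sigma) + \int(\phi - nV)\,d\mu_n \;\ge\; P(\phi - nV) - \tfrac{1}{n}.\]
Since $V\ge 0$, this gives $h_{\mu_n}(\sigma)+\int\phi\,d\mu_n\ge P(\phi-nV)-\tfrac{1}{n}$, whose limit is $P^{top}_\infty(\phi)$ by Proposition \ref{inf}. Together with \eqref{y1}, equality will follow once $(\mu_n)_n$ is shown to converge on cylinders to the zero measure.

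The main obstacle is precisely this escape of mass for the constructed sequence. For this I combine the universal variational bound $h_{\mu_n}(\sigma)+\int\phi\,d\mu_n\le P(\phi)$ with the defining property of $\mu_n$; subtracting and rearranging gives
\[n\int V\,d\mu_n \;\le\; P(\phi) - P(\phi - nV) + \tfrac{1}{n} \;\le\; P(\phi) - P^{top}_\infty(\phi) + 1,\]
so $\int V\,d\mu_n = O(1/n)$. Using $V\ge \tfrac{1}{k}1_{[k]}$ and the inclusion of every cylinder $[a_1,\dots,a_m]$ in $[a_1]$, this forces $\mu_n([a_1,\dots,a_m])\le a_1\int V\,d\mu_n\to 0$ for every fixed cylinder, completing the construction.
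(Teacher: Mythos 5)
Your proposal is correct and follows essentially the same route as the paper: the upper bound via the variational principle applied to $\phi - tV$ together with Proposition \ref{inf}, and the matching sequence built from near-maximizers of $P(\phi - nV)$, with escape of mass forced by the uniform bound $n\int V\,d\mu_n \le P(\phi)-P^{top}_\infty(\phi)+1$. The only cosmetic difference is that you derive this bound directly and quantitatively, whereas the paper phrases the same estimate as a proof by contradiction.
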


\begin{proof} Let  $V=\sum_{k=1}^\infty \frac{1}{k}1_{[k]}$. Note that $\lim_{n\to\infty}\int Vd\mu_n=0$. By the standard variational principle we have that 
$$\limsup_{n\to\infty}\bigg(h_{\mu_n}(\sigma)+\int\phi d\mu_n\bigg)=\limsup_{n\to\infty}\bigg(h_{\mu_n}(\sigma)+\int (\phi -tV)d\mu_n\bigg)\le P(\phi -tV).$$
In particular, the left-hand side is bounded from above by $\lim_{t\to\infty}P(\phi -tV)$. Inequality (\ref{y1}) follows from Proposition \ref{inf}.

Let us suppose now that $M:=P^{top}_\infty(\phi )>-\infty$. We will prove that inequality (\ref{y1}) is sharp. It follows from Proposition \ref{inf} that $P(\phi -tV)\ge M$, holds for all $t\in\R$. The variational principle guarantees the existence of $\sigma$-invariant probability measures $(\nu_n)_n$ such that 
\begin{align}\label{0} h_{\nu_n}(\sigma)+\int (\phi-nV) d\nu_n\ge P(\phi -nV)-\frac{1}{n},\end{align}
where $\int (\phi -nV)d\nu_n>-\infty$. In particular, $\int \phi d\nu_n>-\infty$. We claim that $(\nu_n)_n$ converges on cylinders to the zero measure. We argue by contradiction and suppose that  $\limsup_{n\to\infty}\nu_n(C)>0$, for some cylinder $C\subseteq \Sigma$. In this case we would have that $\limsup_{n\to\infty}n\int Vd\nu_n=\infty$, which is not possible because inequality (\ref{0}) implies that $P(\phi)-n\int Vd\nu_ n\ge M-1$. On the other hand, 
\begin{align*} P^{top}_\infty(\phi )=\lim_{n\to\infty}P(\phi -nV) &\le \liminf_{n\to\infty}\big(h_{\nu_n}(\sigma)+\int \phi d\nu_n-n\int Vd\nu_n\big)\\
&\le \liminf_{n\to\infty} \big(h_{\nu_n}(\sigma)+\int \phi d\nu_n\big).\end{align*}
In conclusion, $(\nu_n)_n$ is a sequence in $\M_\phi(\sigma)$ that converges on cylinders to the zero measure and $P^{top}_\infty(\phi )=\lim_{n\to\infty} h_{\nu_n}(\sigma)+\int \phi d\nu_n.$
\end{proof}

\begin{proof}[Proof of Theorem \ref{varpriplus}] In Proposition \ref{presinf} we proved that $P_\infty(\phi)\le P^{top}_\infty(\phi)$, and that equality holds if $P^{top}_\infty(\phi)>-\infty$.  Observe that inequality $P_\infty(\phi)\le P^{top}_\infty(\phi)$ also implies that if $P^{top}_\infty(\phi)=-\infty$, then $P_\infty(\phi)=-\infty$. 
\end{proof}

\begin{remark}\label{rem:inf} Let $\phi$ be a potential with summable variations. Note that if $\phi$ has finite pressure and there is no sequence in $\M(\sigma)$ that converges to the zero measure, then $P_\infty(\phi)=P^{top}_\infty(\phi)=-\infty$ (see Proposition \ref{presinf}). Suppose now that $P(\phi)=\infty$. Then,
\begin{enumerate}
\item $P^{top}_\infty(\phi)=\infty$ (see Proposition \ref{inf}). Then, $P(\phi)<\infty$ if and only if $P_\infty^{top}(\phi)<\infty$.
\item If there exists a sequence in $\M(\sigma)$ which converges on cylinders to the zero measure,  then $P_\infty(\phi)=\infty$ (see Lemma \ref{lem:infpre}). If there is no sequence converging to the zero measure, then by definition $P_\infty(\phi)=-\infty$.
\end{enumerate}
\end{remark}

\begin{remark}\label{rem:Hinfty}
Observe  that $|P_\infty^{top}(\phi,a)-P_\infty^{top}(\phi',a)|\le \|\phi-\phi'\|_0$, and that $|P_\infty(\phi)-P_\infty(\phi')|\le \|\phi-\phi'\|_0$. In particular, the pressure at infinity functions $\phi\mapsto P_\infty(\phi)$ and $\phi\mapsto P_\infty^{top}(\phi,a)$ are continuous with respect to the $C^0-$norm. Since potentials with summable variations are dense in $\H$, we conclude that the topological  pressure at infinity is well defined for potentials in $\H$ and that if $\phi\in\H$, then $P_\infty^{top}(\phi)=P_\infty(\phi)$.
\end{remark}

\subsection{Further properties and classes of potentials}\label{sec:phi} In this subsection, we introduce some definitions and establish useful facts for later use.

 For a potential $\phi \in C(\Sigma)$ with finite pressure, we define $$s_\infty(\phi )=\inf\{t\in [0,\infty):P(t\phi )<\infty\}.$$
By definition $s_\infty(\phi)\in [0,1]$. Note that if $\|\phi_1-\phi_2\|_0<\infty$, then $s_\infty(\phi_1)=s_\infty(\phi_2)$. 

\begin{lemma}\label{lem:-inf} Let $\phi\in C(\Sigma)$ be a potential with finite pressure. Suppose that $s_\infty(\phi)<1$. 
Let $(\mu_n)_n$ be sequence in $\M_\phi(\sigma)$ such that $\lim_{n\to\infty}\int \phi d\mu_n=-\infty$. Then, $$\lim_{n\to\infty}h_{\mu_n}(\sigma)+\int \phi d\mu_n=-\infty.$$
\end{lemma}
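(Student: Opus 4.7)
The plan is to exploit the convexity of the pressure function $t\mapsto P(t\phi)$ via the variational principle. Since $s_\infty(\phi)<1$ and $P(\phi)<\infty$ by assumption, I first want to pick some $t\in (s_\infty(\phi),1)$ with $P(t\phi)<\infty$. Such a $t$ exists because the set $A=\{s\in[0,1]:P(s\phi)<\infty\}$ is an interval (the function $s\mapsto P(s\phi)$ is convex and thus finite on an interval), and it contains $1$. Its infimum being $s_\infty(\phi)<1$ means I can choose $t\in(s_\infty(\phi),1)\cap A$.

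For every $n$, the assumption $\mu_n\in \M_\phi(\sigma)$ together with $t\ge 0$ ensures $\int t\phi\, d\mu_n=t\int \phi\, d\mu_n>-\infty$, so $\mu_n\in\M_{t\phi}(\sigma)$. Applying the variational principle to the potential $t\phi$ then yields
\[
h_{\mu_n}(\sigma)+t\int \phi\, d\mu_n\le P(t\phi).
\]
Rearranging by adding $(1-t)\int \phi\, d\mu_n$ to both sides gives
\[
h_{\mu_n}(\sigma)+\int \phi\, d\mu_n\le P(t\phi)+(1-t)\int \phi\, d\mu_n.
\]

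Since $1-t>0$ is fixed, $P(t\phi)$ is a finite constant, and $\int \phi\, d\mu_n\to -\infty$ by hypothesis, the right-hand side tends to $-\infty$. This gives the desired conclusion. There is no real obstacle; the only point to check carefully is the existence of the parameter $t\in(s_\infty(\phi),1)$ with $P(t\phi)<\infty$, which follows at once from the convexity of $s\mapsto P(s\phi)$ together with the hypothesis $P(\phi)<\infty$.
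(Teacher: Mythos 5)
Your proof is correct and is essentially the paper's own argument: the paper also picks $\epsilon>0$ with $P((1-\epsilon)\phi)<\infty$ (your $t=1-\epsilon$), invokes convexity of $s\mapsto P(s\phi)$ to guarantee finiteness on $(s_\infty(\phi),1]$, and bounds $h_{\mu_n}(\sigma)+\int\phi\,d\mu_n\le P((1-\epsilon)\phi)+\epsilon\int\phi\,d\mu_n$. Nothing further is needed.
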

\begin{proof} Since the pressure is convex whenever finite, it follows that $P(t\phi)<\infty$ for every $t\in (s_\infty(\phi),1]$. Consider $\epsilon>0$ small such that $P((1-\epsilon)\phi)<\infty$, and note that $$h_{\mu_n}+\int \phi d\mu_n\le P((1-\epsilon)\phi)+\epsilon\int\phi d\mu_n,$$
for every $n\in\N$.
\end{proof}

\begin{lemma}\label{prop:gapr} Let $\phi\in C(\Sigma)$ be a potential with finite pressure. Assume that $s_\infty(\phi)<1$, and that $P_\infty(\phi)>-\infty$. Let $(\mu_n)_n$ be a sequence in $\M_\phi(\sigma)$ that converges on cylinders to the zero measure and such that $\lim_{n\to\infty}\big(h_{\mu_n}(\sigma)+\int \phi d\mu_n\big)=P_\infty(\phi)$. Therefore, $\liminf_{n\to\infty}\int \phi d\mu_n>-\infty$.
\end{lemma}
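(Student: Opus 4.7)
The plan is to argue by contradiction, leveraging Lemma \ref{lem:-inf} which is the essential preparatory step. Suppose, toward a contradiction, that $\liminf_{n\to\infty}\int\phi\,d\mu_n=-\infty$. Then one can extract a subsequence $(\mu_{n_k})_k$ along which $\int\phi\,d\mu_{n_k}\to -\infty$ as $k\to\infty$. This subsequence still lies in $\M_\phi(\sigma)$, and since $s_\infty(\phi)<1$ by hypothesis, Lemma \ref{lem:-inf} applies directly and yields
\[
\lim_{k\to\infty}\Bigl(h_{\mu_{n_k}}(\sigma)+\int\phi\,d\mu_{n_k}\Bigr)=-\infty.
\]

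On the other hand, the full sequence satisfies $\lim_{n\to\infty}\bigl(h_{\mu_n}(\sigma)+\int\phi\,d\mu_n\bigr)=P_\infty(\phi)$, and by assumption $P_\infty(\phi)>-\infty$. Since every subsequence of a convergent sequence inherits the same limit, the subsequence above must also converge to $P_\infty(\phi)$, contradicting the preceding display. Therefore the assumption $\liminf_{n\to\infty}\int\phi\,d\mu_n=-\infty$ is untenable, and we conclude $\liminf_{n\to\infty}\int\phi\,d\mu_n>-\infty$.

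There is no substantial obstacle here: once Lemma \ref{lem:-inf} is available, the statement is a two-line contradiction argument. The hypothesis $s_\infty(\phi)<1$ enters only through Lemma \ref{lem:-inf}, and the hypothesis $P_\infty(\phi)>-\infty$ enters only to ensure that the limit $P_\infty(\phi)$ cannot equal $-\infty$, so that the contradiction is genuine. The convergence of $(\mu_n)_n$ to the zero measure on cylinders is not used in the proof itself, but it is what makes the assumption meaningful (the sequence is of the type appearing in the definition of $P_\infty(\phi)$).
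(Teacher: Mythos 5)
Your proof is correct and is exactly the argument the paper gives: contradiction, extract a subsequence along which $\int\phi\,d\mu_n\to-\infty$, apply Lemma \ref{lem:-inf} to that subsequence, and contradict the convergence of the full sequence to $P_\infty(\phi)>-\infty$. The paper states this in three lines; you have merely spelled out the same steps.
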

\begin{proof} We argue by contradiction and assume that $\liminf_{n\to\infty}\int \phi d\mu_n=-\infty$. In particular, there exists a subsequence whose limit is $-\infty$. The result now follows from Lemma \ref{lem:-inf}.
\end{proof}

In the introduction, we defined  $$\Psi=\{\phi\in C_{uc}(\Sigma): \inf \phi>0, \text{var}_2(\phi)<\infty, P(-\phi)<\infty\}.$$
Note that if $\phi\in\Psi$, then $t\mapsto P(-t\phi)$ is  strictly decreasing, and therefore $P(-t\phi)$ is finite for  $t\in (s_\infty(-\phi),\infty)$, and $P(-t\phi)=\infty$, if $t\in (0,s_\infty(-\phi))$. Furthermore, $-t\phi\in \H$ for every $t>0$, and therefore Remark \ref{rem:Hinfty} applies.

\begin{definition}\label{def:123} Let $\phi \in \Psi$. Define $$h(\phi)=\inf\{t:P^{top}_\infty(-t\phi)\le 0\}.$$ We say that $\phi $ belongs to $\Psi_1$ if there exists $s>0$ such that $P^{top}_\infty(-s\phi)\in (0,\infty)$. We say that $\phi\in \Psi_2$ if $\phi$ does not belong to $\Psi_1$. 
\end{definition}

\begin{remark}\label{rem:inutil} Note that $P(-t\phi)<\infty$  if and only if $P_\infty^{top}(-t\phi)<\infty$ (see Remark \ref{rem:inf}(1)). In particular, $h(\phi)\ge s_\infty(-\phi)$. Moreover, if $t>s_\infty(-\phi)$, then $P_\infty^{top}(-t\phi)=P_\infty(-t\phi)$ (see Remark \ref{rem:Hinfty}). Furthermore:
\begin{enumerate}
\item If $\phi\in \Psi_1$, then there exists a unique real number $s_\phi>s_\infty(-\phi)$ such that \\ $P^{top}_\infty(-s_\phi\phi)=0$. In this case $h(\phi)=s_\phi$. 
\item If $\phi\in \Psi_2$, then $P^{top}_\infty(-s\phi)<0,$ for every $s> s_\infty(-\phi)$. In this case $h(\phi)=s_\infty(-\phi)$.
\end{enumerate}
\end{remark}

The quantity $h(\phi)$ has a clear dynamical interpretation, as it represents the entropy at infinity of the suspension flow over $(\Sigma,\sigma)$ with roof function $\phi$ (see Section \ref{sec:flow}).

\section{SPR potentials and the existence of sequences converging to the zero measure}\label{sec:spr} In his study of phase transitions, Sarig \cite{sa2} introduced the class of strongly positive recurrent potentials (referred to as SPR). The thermodynamic formalism of SPR potentials is similar to that of H\"older potentials in subshifts of finite type. For instance, a weakly H\"older SPR potential  is positive recurrent and the RPF measure has exponential decay of correlations (see \cite{sa2, cs}). Furthermore, it is proven in \cite[Theorem 2.2]{cs} that the space of SPR potentials is open and dense in the space of weakly H\"older potentials with finite pressure (in the $C^0$ and finer topologies). 

Let $(\Sigma,\sigma)$ be a CMS with alphabet $S$ and transition matrix $M$. Let $a\in S$, and define $\Sigma(a)$ as the set of points in $[a]$ that return to $[a]$ infinitely many times. Let $\sigma_a:\Sigma(a)\to\Sigma(a)$ be the first return map to $\Sigma(a)$. The system $(\Sigma(a),\sigma_a)$ is conjugate to the full shift on a countable alphabet. The Markov partition on $\Sigma(a)$ is given by  cylinders of the form $[a,b_1,\ldots,b_n]$, where each $b_i\ne a,$ and $b_na$ is an admissible word. Denote by $\tau_a:\Sigma(a)\to \N$ the first return time. The function $\tau_a$ is locally constant and depends only on the first coordinate of the Markov structure on $\Sigma(a)$. For a potential $\psi:\Sigma\to\R$ we denote by $\overline{\psi}:\Sigma(a)\to\R$ the induced potential, that is, $\overline{\psi}(x)=\sum_{i=0}^{\tau_a(x)-1}\psi(\sigma^i x)$, for every $x\in \Sigma(a)$. Let $\Phi$ be the set of weakly H\"older potentials in $\Sigma$ that are bounded above and have finite pressure. 

\begin{remark}\label{rem:induce0}
If $\psi\in \Phi$ is recurrent and $P(\psi)=0$, then $\overline{\psi}$ is weakly H\"older, positive recurrent and $P(\overline{\psi})=0$ (see \cite[Lemma 3]{sa2}).
\end{remark}

Let  $\phi\in \Phi$. Define $p_a^*(\phi)=\sup\{t:P(\overline{\phi+t})<\infty\}.$ It is proved in \cite{sa2} (see also \cite[Section 7.3]{cs}) that $p_a^*(\phi)=-P^*(\phi,a)$ (see subsection \ref{introcms}). The discriminant of $\phi$ at $a\in S$ is defined as
\begin{align}\label{discriminant}\Delta_a(\phi)=\sup\{P(\overline{\phi+t}): t<p_a^*(\phi)\}.\end{align}

The following result was proved by Sarig in \cite[Theorem 2]{sa2}
\begin{theorem}[Discriminant Theorem] \label{disc} Let $(\Sigma,\sigma)$ be a mixing CMS and $\phi\in \Phi$. Let $a\in S$.
\begin{enumerate}
\item The equation $P(\overline{\phi+t})=0$ has a unique solution $t=p(\phi)$ if $\Delta_a(\phi)\ge 0$ and no solution if $\Delta_a(\phi)<0$. Moreover,
\[ P(\phi)= \left\{
\begin{array}{ll}
      -p(\phi) \text{ if }\Delta_a(\phi)\ge 0,\\
-p^*_a(\phi) \text{ if } \Delta_a(\phi)<0.
\end{array} 
\right. \]
\item $\phi$ is positive recurrent if $\Delta_a(\phi)>0$ and transient if $\Delta_a(\phi)<0$. If $\Delta_a(\phi)=0$, then $\phi$ is either positive recurrent or null recurrent. 
\end{enumerate}
\end{theorem}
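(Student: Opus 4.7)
The plan is to analyze the function $g\colon \R \to \R \cup \{\pm\infty\}$ defined by $g(t) = P(\overline{\phi + t})$ on the induced full shift $(\Sigma(a), \sigma_a)$. Since the inducing procedure gives $\overline{\phi + t}(y) = \overline{\phi}(y) + t\tau_a(y)$ with $\tau_a \geq 1$, and the pressure functional is monotone and convex in the potential, $g$ is convex and strictly increasing on its effective domain. By the definition of $p_a^*(\phi)$, this domain is an interval extending up to $p_a^*(\phi)$, and $\Delta_a(\phi) = \lim_{t \uparrow p_a^*(\phi)} g(t)$ by monotone left-continuity. A comparison argument on the induced system shows that each term in its partition sum at parameter $t$ is dominated by $e^{t - t_0}$ times the corresponding term at a fixed $t_0 < 0$, so $g(t) \to -\infty$ as $t \to -\infty$.

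Given these properties of $g$, Part (1) reduces to the intermediate value theorem and strict monotonicity. If $\Delta_a(\phi) > 0$, there is a unique interior root $p(\phi) \in (-\infty, p_a^*(\phi))$ of $g(t) = 0$. If $\Delta_a(\phi) = 0$, the root is $p(\phi) := p_a^*(\phi)$. If $\Delta_a(\phi) < 0$, then $g < 0$ throughout its domain of finiteness and $g = +\infty$ beyond $p_a^*(\phi)$, so no solution exists. For the pressure formula in the recurrent regime $\Delta_a(\phi) \geq 0$, I would apply Remark \ref{rem:induce0} together with its converse: for $\psi \in \Phi$, $P(\overline{\psi}) = 0$ forces $P(\psi) = 0$. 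Applied to $\psi = \phi + p(\phi)$, this yields $0 = P(\phi + p(\phi)) = P(\phi) + p(\phi)$, and hence $P(\phi) = -p(\phi)$. In the transient case, the identity $P(\phi) = -p_a^*(\phi)$ follows from the identification of $e^{-P(\phi)}$ with the radius of convergence of $\sum_{n \geq 1} Z_n^*(\phi, a)\,z^n$: in the transient regime, this radius coincides with the threshold $e^{-p_a^*(\phi)}$ where the induced partition sum first diverges.

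For Part (2), I would differentiate $g$ at its root and read off the recurrence type from the behavior of $g'$. By the variational principle for the induced system, $g'(t) = \int \tau_a \, d\mu_t$, where $\mu_t$ is the RPF measure for $\overline{\phi + t}$; this quantity is, up to normalization, $\sum_n n Z_n^*(\phi + t, a) e^{-n P(\phi + t)}$, whose finiteness is precisely the positive recurrence criterion for $\phi + t$. When $\Delta_a(\phi) > 0$, the root $p(\phi)$ lies strictly inside the effective domain of $g$, where $g'$ is finite, forcing positive recurrence. When $\Delta_a(\phi) < 0$, at the critical parameter $t = p_a^*(\phi)$ the induced system has strictly negative pressure, so no conformal eigenmeasure of the transfer operator $L_\phi$ exists, yielding transience. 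The borderline case $\Delta_a(\phi) = 0$ is inconclusive: whether $\sum_n n Z_n^*(\phi + p_a^*(\phi), a)$ converges distinguishes positive recurrence from null recurrence.

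The hardest step is expected to be the converse direction in the pressure formula — deducing $P(\psi) = 0$ from $P(\overline{\psi}) = 0$ — together with the delicate boundary analysis required when $\Delta_a(\phi) = 0$. Both genuinely require Sarig's generating-function machinery: specifically, the identification of $e^{-P(\phi)}$ with the radius of convergence of $\sum_n Z_n^*(\phi, a)\,z^n$, and the characterization of recurrence and positive recurrence as divergence, respectively of $\sum_n Z_n^*$ and $\sum_n n Z_n^*$, at this critical radius. These inputs cannot be replaced by the soft cylinder-topology and variational arguments used elsewhere in the paper.
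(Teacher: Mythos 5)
This statement is not proved in the paper at all: it is quoted verbatim as Sarig's Discriminant Theorem (\cite[Theorem 2]{sa2}), so the only ``proof'' the paper offers is the citation. Measured against Sarig's actual argument, your outline follows the same strategy --- study $g(t)=P(\overline{\phi+t})=P(\overline{\phi}+t\tau_a)$ as an increasing convex function, locate its zero by the intermediate value theorem, and translate the position of the zero relative to $p_a^*(\phi)$ into the recurrence classification via the generating functions $\sum_n Z_n^*(\phi,a)z^n$ and $\sum_n Z_nz^n$. You also correctly identify where the real work is, namely the renewal-equation machinery linking the two series.

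That said, as a proof the proposal has genuine gaps exactly at the points you defer. First, the ``converse'' you invoke --- $P(\overline{\psi})=0$ forces $P(\psi)=0$ for $\psi\in\Phi$ --- is not the converse of Remark \ref{rem:induce0}, is nowhere established in the paper, and is essentially equivalent to part (1) of the theorem in the regime $\Delta_a(\phi)\ge 0$; assuming it is close to assuming the conclusion. It must be derived from the renewal relation $\sum_n Z_n(\phi,a)z^n\asymp\bigl(1-\sum_n Z_n^*(\phi,a)z^n\bigr)^{-1}$, which controls the radius of convergence $e^{-P(\phi)}$ of the left-hand series in terms of where the right-hand series reaches $1$. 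Second, in the case $\Delta_a(\phi)=0$ you need $g(p_a^*(\phi))=0$, i.e.\ left-continuity of $g$ \emph{at} the endpoint of its effective domain; a general increasing convex function can jump to $+\infty$ there, so this step requires the series representation of the induced pressure on the full shift together with monotone convergence, not just convexity. Third, your identification is miswired: $e^{-P(\phi)}$ is the radius of convergence of $\sum_n Z_n(\phi,a)z^n$, while the radius of convergence of $\sum_n Z_n^*(\phi,a)z^n$ is $e^{-P^*(\phi,a)}=e^{p_a^*(\phi)}$ (not $e^{-p_a^*(\phi)}$); the transient-case formula $P(\phi)=-p_a^*(\phi)$ comes from showing these two radii coincide when $\sum_n Z_n^*(\phi,a)e^{np_a^*(\phi)}=e^{\Delta_a(\phi)}<1$. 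Finally, the transience claim ``strictly negative induced pressure at the critical parameter implies no conformal eigenmeasure'' is asserted rather than argued; the clean route is again the renewal equation, which gives $\sum_n e^{nP(\phi)}Z_n(\phi,a)<\infty$ directly. None of these are wrong turns --- they are the content of Sarig's proof --- but they cannot be supplied by the soft variational and cylinder-topology tools developed in this paper, as you yourself note.
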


We say that $\phi\in \Phi$ is strongly positive recurrent if $\Delta_a(\phi)>0$.  If the zero potential is SPR, we say that $(\Sigma,\sigma)$ is strongly positive recurrent. The class of SPR (or stable-positive recurrent) countable Markov shifts has been extensively studied by Gurevich and Savchenko in \cite{gs}. It was proved in \cite[Theorem 8.2]{rs} that if $\phi\in\Phi$, then $\phi$ is SPR if and only if $P_\infty(\phi)<P(\phi)$. It is natural to consider the following definition.
\begin{definition}\label{def:sprn} We say $\phi\in C(\Sigma)$ is SPR if $P(\phi)$ is finite and $P_\infty(\phi)<P(\phi)$.
\end{definition}

Let $G=(V,E)$ be the directed graph associated with $(\Sigma,\sigma)$. Recall that the set of vertices $V$ is identified with $S$ and $(i,j)\in E$ if and only if $M_{i,j}=1$. A subset $F\subseteq V$ is called a uniform Rome if there exists $N\in\N$ such that $V\setminus F$ has no paths in $G$ of length greater than $N$. A finite uniform Rome is a uniform Rome where $F$ is a finite set. In the next result, the equivalence between $(1)$ and $(2)$ was proved by Cyr in \cite[Theorem 2.1]{c}, and the equivalence between $(2)$ and $(3)$ follows from \cite[Theorem 2.3]{cs}.

\begin{theorem}\label{hj} Let $(\Sigma,\sigma)$ be a mixing CMS and $G$ the associated directed graph. Then the following statements are equivalent:
\begin{enumerate}
\item The graph $G$ has a finite uniform Rome.
\item The set of transient potentials in $\Phi$  is empty.
\item Every potential in $\Phi$ is SPR. 
\end{enumerate}
\end{theorem}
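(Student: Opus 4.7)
The plan is to establish the cyclic chain $(3) \Rightarrow (2) \Rightarrow (1) \Rightarrow (3)$, combining the cited structural results of Cyr \cite{c} and of \cite{cs} with the Discriminant Theorem stated above.

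The implication $(3) \Rightarrow (2)$ is immediate from the Discriminant Theorem. Indeed, if every $\phi \in \Phi$ is SPR, then by Definition \ref{def:sprn} (applied together with \cite[Theorem 8.2]{rs} which gives the equivalence with $\Delta_a(\phi) > 0$) each such $\phi$ satisfies $\Delta_a(\phi) > 0$, so by Theorem \ref{disc}(2), $\phi$ is positive recurrent and in particular not transient. Hence the set of transient potentials in $\Phi$ is empty.

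For $(2) \Rightarrow (1)$, I would invoke the contrapositive of Cyr's Theorem 2.1 in \cite{c}. The intuition is as follows: if $G$ has no finite uniform Rome, then for every finite set $F \subseteq V$ and every $N \in \N$ there exist paths in $G$ of length greater than $N$ avoiding $F$. One then constructs a weakly H\"older potential $\phi \in \Phi$ by assigning suitably chosen values along these escaping paths so that, after normalizing $P(\phi) = 0$, the induced return sum $\sum_{n \ge 1} Z_n^*(\phi, a)$ converges with heavy-tail behavior, producing transience. The explicit construction and its verification are the content of \cite[Theorem 2.1]{c}.

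For $(1) \Rightarrow (3)$, I would invoke \cite[Theorem 2.3]{cs}. The underlying idea: a finite uniform Rome $F$ with path-length bound $N$ forces every sufficiently long loop in $G$ to visit $F$, so for any $a \in F$ the induced system $(\Sigma(a), \sigma_a)$ inherits a finitely primitive structure, with a uniform bound on the length of excursions relative to $F$. For any $\phi \in \Phi$, this uniform control lets one show that $P(\overline{\phi + t})$ is finite for $t$ slightly below $p_a^*(\phi)$ and, crucially, strictly positive via a gap argument exploiting the fact that only boundedly many excursion-types contribute. This yields $\Delta_a(\phi) > 0$, which by the Discriminant Theorem makes $\phi$ SPR.

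The main obstacle is the step $(1) \Rightarrow (3)$: one must extract \emph{strict positivity} of the discriminant uniformly across \emph{all} of $\Phi$ from the purely combinatorial Rome condition. Mere recurrence is relatively soft, but strong positive recurrence requires a quantitative gap, and this is precisely the non-trivial content of \cite[Theorem 2.3]{cs}. The remaining two implications are either immediate from the Discriminant Theorem or a direct application of Cyr's combinatorial characterization.
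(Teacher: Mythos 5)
Your proposal follows essentially the same route as the paper: the paper gives no independent proof of this statement but attributes the equivalence $(1)\Leftrightarrow(2)$ to Cyr's Theorem~2.1 and $(2)\Leftrightarrow(3)$ to \cite[Theorem 2.3]{cs}, which is exactly the pair of results you lean on for the two hard implications, with your direct argument for $(3)\Rightarrow(2)$ via the Discriminant Theorem being the same easy observation. One small caveat: \cite[Theorem 2.3]{cs} relates the absence of transient potentials to strong positive recurrence, i.e.\ it supplies $(2)\Rightarrow(3)$ rather than a direct combinatorial derivation of $(3)$ from the Rome condition, so your step $(1)\Rightarrow(3)$ should really be read as the composition $(1)\Rightarrow(2)\Rightarrow(3)$ rather than as the literal content of that citation.
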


We add another statement to the equivalences mentioned above.

\begin{theorem}\label{cry}  Let $(\Sigma,\sigma)$ be a mixing CMS and $G$ the associated directed graph. Then the following statements are equivalent:
\begin{enumerate}
\item\label{a} The graph $G$ has a finite uniform Rome.
\item\label{b} The set of transient potentials in $\Phi$  is empty.
\item\label{c} $P_\infty^{top}(\phi)=-\infty$, for every $\phi\in \Phi$. In particular, every potential in $\Phi$ is SPR. 
\item\label{d} There is no sequence in $\M(\sigma)$ which converges on cylinders to the zero measure. 
\end{enumerate}
\end{theorem}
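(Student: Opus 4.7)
The plan is to use Theorem \ref{hj} as a black box for the equivalence (\ref{a}) $\iff$ (\ref{b}) and close the chain by proving (\ref{a}) $\Rightarrow$ (\ref{d}) $\Rightarrow$ (\ref{c}) $\Rightarrow$ (\ref{a}).

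For (\ref{a}) $\Rightarrow$ (\ref{d}), let $F \subseteq V$ be a finite uniform Rome, with $N \in \N$ such that $V \setminus F$ contains no path in $G$ of length greater than $N$. Then for any $x \in \Sigma$ the word $x_1 \cdots x_{N+2}$ cannot lie entirely in $V \setminus F$, so
\[
\sum_{i=0}^{N+1} \mathbf{1}_{[F]}(\sigma^i x) \ge 1, \qquad \text{where } [F] = \bigcup_{i \in F}[i].
\]
Integrating against any $\mu \in \M(\sigma)$ and using $\sigma$-invariance gives $(N+2)\,\mu([F]) \ge 1$, hence the uniform lower bound $\mu([F]) \ge 1/(N+2)$. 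Because $F$ is finite, $[F]$ is a finite union of cylinders, and any sequence $(\mu_n)_n$ in $\M(\sigma)$ converging on cylinders to the zero measure would satisfy $\mu_n([F]) \to 0$, a contradiction. Thus no such sequence exists.

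For (\ref{d}) $\Rightarrow$ (\ref{c}), if no sequence in $\M(\sigma)$ converges on cylinders to the zero measure, then the supremum in the definition of $P_\infty(\phi)$ runs over the empty set, so $P_\infty(\phi) = -\infty$ for every potential $\phi$. Since each $\phi \in \Phi$ has summable variations and finite pressure, the variational principle for pressures at infinity (Theorem \ref{varpriplus}) yields $P^{top}_\infty(\phi) = P_\infty(\phi) = -\infty$. For (\ref{c}) $\Rightarrow$ (\ref{a}), given (\ref{c}) and any $\phi \in \Phi$, Theorem \ref{varpriplus} again gives $P_\infty(\phi) = -\infty < P(\phi)$, so $\phi$ is SPR in the sense of Definition \ref{def:sprn}; by \cite[Theorem 8.2]{rs} this coincides with the discriminant notion of SPR used in Theorem \ref{hj}(3), so every $\phi \in \Phi$ is SPR, and Theorem \ref{hj} then delivers (\ref{a}).

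The only genuinely new input is the pigeonhole-type argument in (\ref{a}) $\Rightarrow$ (\ref{d}); the remaining implications are a short chase through the variational principle for pressures at infinity together with the previously established equivalences. The main obstacle, such as it is, lies in correctly translating the combinatorial finite-Rome condition into a uniform mass lower bound on a finite union of cylinders, but once the invariance-plus-pigeonhole bound is written down, the rest is formal.
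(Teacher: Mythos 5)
Your proposal is correct and follows essentially the same route as the paper: the chain (1) $\Rightarrow$ (4) $\Rightarrow$ (3) $\Rightarrow$ (1) closed off by Theorem \ref{hj}, with (4) $\Rightarrow$ (3) coming from the variational principle at infinity (equivalently, Proposition \ref{presinf}). The only difference is in (1) $\Rightarrow$ (4): you obtain the uniform bound $\mu\big(\bigcup_{s\in F}[s]\big)\ge 1/(N+2)$ for \emph{every} invariant measure directly by averaging the pigeonhole inequality along orbits, whereas the paper establishes the bound only for periodic measures and then invokes their density in $\M(\sigma)$ --- your version is slightly more self-contained.
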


\begin{proof} Let $\phi\in\Phi$. It follows from Proposition \ref{presinf} that if $P_\infty^{top}(\phi)>-\infty$, then there are sequences of invariant probability measures that converge on cylinders to the zero measure. We conclude that (\ref{d}) implies that $P_\infty^{top}(\phi)=-\infty$, and therefore $\phi$ is SPR. In other words, (\ref{d}) implies (\ref{c}). By Theorem \ref{hj}, to complete the proof,  it is enough to show that (\ref{a}) implies (\ref{d}). Let $F\subseteq V$ be a finite uniform Rome of $G=(V,E)$. It follows from the definition of uniform Rome that if $\mu\in \M(\sigma)$ is a periodic measure, then $\mu(\bigcup_{s\in F} [s]\big)\ge 1/(N+1)$. In particular, there are no sequences of periodic measures that converge on cylinders to the zero measure. Since periodic measures are dense in $\M(\sigma)$ we conclude that (\ref{d}) holds. 
\end{proof}

\begin{remark}Theorem \ref{cry} still holds if $(\Sigma,\sigma)$ is transitive (see Remark \ref{mixtotra}).
\end{remark}

\section{A modified countable Markov shift} \label{con}

Let $(\Sigma,\sigma)$ be a countable Markov shift with alphabet $S$ and transition matrix $M$. Let $\tau:\Sigma\to\N$ be a potential that only depends on the first two coordinates, that is, $\text{var}_2(\tau)=0$. The potential $\tau$ defines a function on $Q:=\{(a,b)\in S\times S:M_{a,b}=1\},$ which we still denote by $\tau$. With this data, we will construct a countable Markov shift $(\widehat{\Sigma},\widehat{\sigma})$ with alphabet  $\widehat{S}$ and transition matrix $\widehat{M}$ as follows:
\begin{enumerate}
\item For each $(a,b)\in Q,$ we consider a collection of symbols $\{c^{a,b}_1,\ldots,c^{a,b}_{\tau(a,b)}\}$, where $c^{a,b}_1=a$.  We assume that the symbols $(c^{a,b}_i)_{i,a,b}$, where $2\le i\le \tau(a,b)$ and $(a,b)\in Q,$  are pairwise different and not in $S$.  Set $\widehat{S}=\bigcup_{(a,b)\in Q} \{c^{a,b}_1,\ldots,c^{a,b}_{\tau(a,b)}\}$. 
\item If $i\in \{1,\ldots,\tau(a,b)-1\}$, we set $\widehat{M}_{c_i^{a,b}, j}=1$ if and only if $j=c_{i+1}^{a,b}$. Additionally, $\widehat{M}_{c^{a,b}_{\tau(a,b)},q}=1$ if and only if  $q=b$. 
\end{enumerate}

The countable Markov shift $(\widehat{\Sigma},\widehat{\sigma})$ can be described in simple terms by considering its associated directed graph. Let $G=(V,E)$ be the directed graph associated with $(\Sigma,\sigma)$. The function $\tau$ assigns a natural number to each edge $e\in E$. If $\tau(e)>1$, we subdivide the edge $e$ into $\tau(e)$ sub-edges, preserving the orientation.  If $\tau(e)=1,$ we do not modify the edge $e$. The resulting graph $\widehat{G}$ represents $(\widehat{\Sigma},\widehat{\sigma})$. Infinite paths in $\widehat{G}$ are in one-to-one correspondence with infinite paths in $G$, but the dynamics on the edges are slowed down according to the function $\tau$. Note that $(\widehat{\Sigma},\widehat{\sigma})$ is transitive if and only if $(\Sigma,\sigma)$ is transitive.

%By construction, $S$ is a subset of $\widehat{S}$. 
For $(a,b)\in Q$ we define the admissible word in $(\widehat{\Sigma},\widehat{\sigma})$, $$w_{a,b}=c^{a,b}_1\ldots c^{a,b}_{\tau(a,b)}.$$ Note that $w_{a,b}w_{p,q}$ is admissible if and only if $b=p$. We will use the words $(w_{a,b})_{(a,b)\in Q}$ to define points and cylinders in $\widehat{\Sigma}$. These points and cylinders are understood to have entries corresponding to those in the admissible words.

 Let $\A=\bigcup_{s\in S} [s]\subseteq \widehat{\Sigma}$, and set $\pi:\Sigma\to\A$, given by $\pi(a_1,a_2,a_3,\ldots)=(w_{a_1,a_2},w_{a_2,a_3},\ldots)$, which is well defined by construction of $\widehat{\Sigma}$.

\begin{lemma}\label{lem:conj} Let $T:\A\to\A$ be the induced transformation of $\widehat{\sigma}$ on $\A$. Then $\pi$ is a topological conjugacy between $(\Sigma,\sigma)$ and $(\A,T)$. 
\end{lemma}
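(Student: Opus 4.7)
The plan is to verify in turn four items: (a) $\pi$ is well-defined with image in $\A$; (b) $\pi$ is a bijection $\Sigma \to \A$; (c) $\pi$ is a homeomorphism; and (d) $\pi$ conjugates $\sigma$ to the first-return map $T$. The unifying observation is that the intermediate symbols $c_i^{a,b}$ with $2 \le i \le \tau(a,b)$ are new symbols outside $S$ and pairwise distinct for distinct pairs $(a,b)$, so they uniquely encode which block of $w_{a,b}$ one is inside. Once this parsing principle is in hand, every item below becomes a direct unpacking of the construction of $\widehat{S}$ and $\widehat{M}$.

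For (a), the concatenation defining $\pi(x)$ is admissible in $\widehat{\Sigma}$ because each word $w_{a,b}$ is admissible by construction, and the terminal symbol $c_{\tau(a_i,a_{i+1})}^{a_i,a_{i+1}}$ of $w_{a_i,a_{i+1}}$ is, by the transition rule of $\widehat{M}$, allowed to be followed only by $a_{i+1} = c_1^{a_{i+1},a_{i+2}}$, which is the initial symbol of $w_{a_{i+1},a_{i+2}}$. Since the first coordinate of $\pi(x)$ is $c_1^{a_1,a_2} = a_1 \in S$, one has $\pi(x) \in [a_1] \subseteq \A$. For (b), I would treat injectivity and surjectivity by the same parsing argument. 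Injectivity: from $\pi(x)$, the first coordinate recovers $a_1$; then either the second coordinate already lies in $S$ (forcing $\tau(a_1,a_2)=1$ and $a_2$ equal to it) or it equals $c_2^{a_1,a_2}$, which by distinctness identifies $a_2$; iterating recovers $x$. Surjectivity: given $y \in \A$, the same reading procedure applied to $y$ produces $x = (a_1,a_2,\ldots) \in \Sigma$ (admissibility $M_{a_i,a_{i+1}}=1$ is automatic, since $w_{a_i,a_{i+1}}$ exists only when $(a_i,a_{i+1})\in Q$), and by construction $\pi(x) = y$.

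For (c), a short computation shows that $\pi([a_1,\ldots,a_k])$ coincides with the cylinder in $\widehat{\Sigma}$ determined by the concatenation $w_{a_1,a_2}\cdots w_{a_{k-1},a_k}$, which is contained in $\A$. Hence $\pi$ carries the basis of cylinders of $\Sigma$ onto a family of open subsets of $\A$, and conversely every nonempty cylinder of $\A$ arises in this way after enlarging $k$ if needed; this makes $\pi$ a homeomorphism. For (d), I compute the first return time of $\pi(x)$ to $\A$: the coordinates at positions $2,\ldots,\tau(a_1,a_2)$ are precisely the intermediate symbols $c_i^{a_1,a_2}\notin S$, while position $\tau(a_1,a_2)+1$ equals $a_2 \in S$. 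Thus the first return time is exactly $\tau(a_1,a_2)$, and one reads off $T(\pi(x)) = \widehat{\sigma}^{\tau(a_1,a_2)}(\pi(x)) = (w_{a_2,a_3},w_{a_3,a_4},\ldots) = \pi(\sigma x)$, which is the desired conjugacy.

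The main (though minor) obstacle is bookkeeping for the degenerate case $\tau(a,b)=1$, where $w_{a,b}$ collapses to the single symbol $a \in S$ and contains no intermediate symbol to identify $b$: in this case the identification of the next block label must be deferred to the first symbol of the subsequent block. Handling this edge case cleanly in the parsing argument of (b) and in the return-time computation of (d) is the only place where the verification requires any care; everything else is formal.
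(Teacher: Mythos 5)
Your proposal is correct and follows essentially the same route as the paper's proof: unique parsing of points of $\A$ into blocks $w_{a_1,a_2}w_{a_2,a_3}\cdots$ gives bijectivity, the identity $\pi([a_1,\ldots,a_k])=[w_{a_1,a_2},\ldots,w_{a_{k-1},a_k}]$ gives the homeomorphism via matching bases of cylinders, and the return-time computation gives the conjugacy. You simply supply more detail than the paper (which asserts the unique decomposition without the parsing argument), including the careful treatment of the degenerate case $\tau(a,b)=1$.
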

\begin{proof} It follows by the definition of $\widehat{\Sigma}$ that every $x\in \A$ can be uniquely written as $x=(w_{a_1,a_2},w_{a_2,a_3},\ldots)$, for some $y=(a_1,a_2,\ldots)\in \Sigma$. Equivalently, the map $\pi$ is a bijection. Moreover, note that $T(x)=(w_{a_2,a_3},w_{a_3,a_4},\ldots)$, and therefore $\pi(\sigma(y))=T(\pi(y))$. We conclude that $\pi$ conjugates $(\Sigma,\sigma)$ and $(\A,T)$. It remains to prove that $\pi$ is a homeomorphism. Indeed, observe that $\pi([a_1,a_2,\ldots, a_n])=[w_{a_1,a_2},w_{a_2,a_3},\ldots, w_{a_{n-1},a_n}]$, and that the collection of cylinders of the form $[w_{a_1,a_2},w_{a_2,a_3},\ldots, w_{a_{n-1},a_n}]$ is a basis of the topology of $\widehat{\Sigma}$. Since $\pi$ maps a basis of $\Sigma$ to a basis of $\widehat{\Sigma}$, the claim follows.
\end{proof}

%In Lemma \ref{lem:conj} we defined the topological conjugacy $\pi: (\Sigma,\sigma)\to (\A,T)$. 
\begin{remark}\label{rem:discretesus}
The induced transformation $T:\A\to \A$ is given by $T|_{[a,c_{2}^{a,b}]}=\widehat{\sigma}^{\tau(a,b)}$. In other words, $\tau$ specifies the first return time to $\A$. The dynamical system $(\widehat{\Sigma},\widehat{\sigma})$ is a discrete suspension flow over $(\A,T)$, or equivalently, $(\Sigma,\sigma)$. The roof function, which depends only on the first two coordinates, is given by $\tau:\Sigma\to\N$, and the time-one map of the suspension flow corresponds to $\widehat{\sigma}$. In particular, there is a correspondence between $\sigma$-invariant measures on $\Sigma$ and $\widehat{\sigma}$-invariant measures on $\widehat{\Sigma}$. In the next subsection, we elaborate on this correspondence, which is explicit in this context, as well as on the relation between the cylinder topologies. We will use Kac's formula and Abramov's formula to relate the integral and entropy of invariant measures on these two CMS. 
\end{remark}

\subsection{Measures on $(\widehat{\Sigma},\widehat{\sigma})$ and convergence on cylinders}

\begin{lemma}\label{lem:p} Let $\nu$ be a $\widehat{\sigma}$-invariant sub-probability measure on $(\widehat{\Sigma},\widehat{\sigma})$ different from the zero measure. Then $\nu(\A)>0.$
\end{lemma}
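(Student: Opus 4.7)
The plan is to argue by contradiction: assume that $\nu(\A)=0$ and show that $\nu$ must then be the zero measure, contradicting the hypothesis.

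The main structural observation is that every orbit of $\widehat{\sigma}$ enters $\A$ in finitely many steps. Indeed, the complement $\widehat{\Sigma}\setminus \A$ consists exactly of points $x$ whose first coordinate is of the form $c_i^{a,b}$ for some $(a,b)\in Q$ and some $i\in\{2,\ldots,\tau(a,b)\}$. By the definition of $\widehat{M}$, from the symbol $c_i^{a,b}$ the only admissible continuation is the deterministic finite path
\[
c_i^{a,b}\to c_{i+1}^{a,b}\to\cdots\to c_{\tau(a,b)}^{a,b}\to b,
\]
so $\widehat{\sigma}^{\tau(a,b)-i+1}(x)\in[b]\subseteq \A$. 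Since $\tau(a,b)\in\N$, the waiting time is finite, and therefore
\[
\widehat{\Sigma}=\bigcup_{n=0}^{\infty}\widehat{\sigma}^{-n}(\A).
\]

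With this decomposition in hand, the conclusion is immediate from $\widehat{\sigma}$-invariance of $\nu$: for every $n\geq 0$ one has $\nu(\widehat{\sigma}^{-n}(\A))=\nu(\A)=0$, and then countable subadditivity yields
\[
\nu(\widehat{\Sigma})\leq \sum_{n=0}^{\infty}\nu(\widehat{\sigma}^{-n}(\A))=0,
\]
so $\nu$ is the zero measure. I do not anticipate a serious obstacle here; the only point that must be checked is that $\tau$ is genuinely $\N$-valued (so that the waiting time into $\A$ is finite for every point), which is part of the construction, and that the set $\A$ is measurable, which is clear since it is a countable union of cylinders.
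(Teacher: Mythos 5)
Your proof is correct. The one claim that needs checking --- that every point of $\widehat{\Sigma}\setminus\A$ reaches $\A$ in finitely many forward iterates --- does hold, because from a symbol $c_i^{a,b}$ with $i\ge 2$ the transition matrix $\widehat{M}$ forces the deterministic path $c_i^{a,b}\to\cdots\to c_{\tau(a,b)}^{a,b}\to b$, so $\widehat{\sigma}^{\tau(a,b)-i+1}(x)\in[b]\subseteq\A$; the decomposition $\widehat{\Sigma}=\bigcup_{n\ge 0}\widehat{\sigma}^{-n}(\A)$, invariance, and countable subadditivity then finish the argument exactly as you say.

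Your route is, however, different in mechanics from the paper's. The paper argues locally and in the backward direction: it picks a single cylinder $[c_i^{a,b}]$ of positive $\nu$-measure and uses the fact that the only admissible word of length $i$ ending in $c_i^{a,b}$ is $c_1^{a,b}\cdots c_i^{a,b}$, so that $\widehat{\sigma}^{-(i-1)}[c_i^{a,b}]=[c_1^{a,b},c_2^{a,b}]\subseteq[a]\subseteq\A$, whence $\nu(\A)\ge\nu([a,c_2^{a,b}])=\nu([c_i^{a,b}])>0$. That version exploits the \emph{backward} determinism of the subdivided edges (each intermediate symbol has a unique predecessor) and has the small bonus of exhibiting an explicit cylinder in $\A$ carrying positive mass. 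Your version exploits only the \emph{forward} determinism and is more robust: it shows that any measurable set which every orbit enters in finite time (a sweep-out set) must have positive measure for any nonzero invariant measure, with no reference to the cylinder structure of $\widehat{\Sigma}$. Both proofs are complete and of comparable length.
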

\begin{proof} Since $\nu$ has positive mass, there exists a cylinder $[c_i^{a,b}]$ such that $\nu([c_i^{a,b}])$ is positive. If $i=1$ there is nothing to prove. Assume that $i>1$, in particular $\tau(a,b)\ge 2$. In this case $\sigma^{-(i-1)}[c_i^{a,b}]=[c_1^{a,b}, c_2^{a,b}]$, and therefore $\nu([a,c_2^{a,b}])=\nu([c_i^{a,b}])>0.$
\end{proof}

Let $\M(\sigma;\tau)=\{\mu\in\M(\sigma): \int\tau d\mu<\infty\}$. Define $\varphi:\M(\widehat{\sigma})\to \M(\sigma;\tau)$ by the formula $$\varphi(\nu)(E)=\frac{1}{\nu(\A)}\nu(\pi(E)).$$
The function $\varphi$ is the normalized restriction map to $\A$, where we have used the indentification between $\Sigma$ and $\A$ given by $\pi$. 
%Here we are abusing notation as we identify $\mu_0(\cdot\cap \A)/\mu_0(\A)$ with the corresponding measure in $(\Sigma,\sigma)$; this is possible by Lemma \ref{lem:conj}. 
It follows from general results in ergodic theory %(e.g. \cite[Section]{saerg}) 
that $\varphi(\nu)$ is a $\sigma$-invariant probability measure, and that it belongs to $\M(\sigma;\tau)$. Indeed, by Kac's formula we have that $\int {\tau} d \varphi(\nu)=\frac{1}{\nu(\A)}<\infty$, for every $\nu\in \M(\widehat{\sigma})$ (see Lemma \ref{lem:p}). In general, Kac's formula states that for $f\in L^1(\widehat{\Sigma},\nu)$, it holds that 
%$$\int_{\widehat{\Sigma}} fd\mu_0=\mu_0(\A)\int_\Sigma \sum_{k=0}^{\tau(x)-1}f(\widehat{\sigma}^k x)d\varphi(\mu_0).$$
\begin{align}\label{eq:kac}\int_{\widehat{\Sigma}} fd\nu=\frac{1}{\int {\tau} d\varphi(\nu)}\int_\Sigma \sum_{k=0}^{{\tau}(x)-1}f(\widehat{\sigma}^k \pi(x))d\varphi(\nu)(x).\end{align}
The relationship between the measures $\nu$ and $\varphi(\nu)$ is explicit in terms of the measure of cylinders. Consider a cylinder in $\widehat{\Sigma}$, which by the structure of $\widehat{\Sigma}$ has either the form
\begin{align}\label{quepa1} C_1=[c_i^{a,b},\ldots ,c_{j}^{a,b}],\end{align} 
where $1\le i\le j\le \tau(a,b)$, or,
\begin{align}\label{quepa} C_2=[c_i^{a,p_1},\ldots ,c_{\tau(a,p_1)}^{a,p_1},w_{p_1,p_2}\ldots w_{p_{n-1},p_n}, c_1^{p_n,b},\ldots ,c_j^{p_n,b}],\end{align} 
where $1\le i\le \tau(a,p_1)$, and $1\le j\le \tau(p_n,b)$. Note that $D_1=\widehat{\sigma}^{-(i-1)}C_1=[w_{a,b}]$ (excluding the case when $i=j=1$, where $C_1=[a]$), and $D_2=\widehat{\sigma}^{-(i-1)}C_2=[w_{a,p_1},\ldots,w_{p_n,b}]$. It follows from Kac's formula (\ref{eq:kac}) for $f=1_{D_i}$ that 
\begin{align}\label{rel}\nu(C_1)=\frac{1}{\int {\tau} d\varphi(\nu)}\varphi(\nu)([a,b]),\text{ and } \nu(C_2)=\frac{1}{\int {\tau} d\varphi(\nu)}\varphi(\nu)([a,p_1,p_2,\ldots,p_n,b]).\end{align}
In particular, it also holds that
\begin{align}\label{rel2}\nu([a])=\sum_{b: M_{a,b}=1}\nu([a,c_1^{a,b}])=\frac{1}{\int {\tau} d\varphi(\nu)}\sum_{b: M_{a,b}=1} \varphi(\nu)([a,b])=\frac{1}{\int {\tau} d\varphi(\nu)}\varphi(\nu)([a]).\end{align} Equations (\ref{rel}) and (\ref{rel2}) explicitly demonstrate that $\varphi$ is a bijection. Indeed, for $\mu\in \M(\sigma;\tau)$ we define 
\begin{align*}\widehat{\mu}(C):=\frac{1}{\int {\tau} d\mu}\mu([a,p_1,\ldots ,p_n,b]),\end{align*}
 where $C=C_i\subseteq \widehat{\Sigma}$ as in  (\ref{quepa1}) or  (\ref{quepa}). We thus obtain a $\widehat{\sigma}$-invariant probability measure on $\widehat{\Sigma}$ (it defines a measure by Kolmogorov extension theorem, and its invariance can be verified on cylinders). Note that the map $\mu\mapsto\widehat{\mu}$ is the inverse of $\varphi$. 

\begin{remark}\label{rem:cyl} It follows from equations (\ref{rel}) and (\ref{rel2}) that for every cylinder $C\subseteq \widehat{\Sigma}$, there exists a cylinder $D\subseteq\Sigma$ such that $\widehat{\mu}(C)=\frac{1}{\int \tau d \mu}\mu(D)$, for every measure $\mu\in \M(\sigma;\tau)$. Analogously, given a cylinder $D\subseteq \Sigma$, there exists a cylinder $C\subseteq \widehat{\Sigma}$ such that $\widehat{\mu}(C)=\frac{1}{\int \tau d \mu}\mu(D)$, for every measure $\mu\in \M(\sigma;\tau)$.
\end{remark}

In the next subsection, we will calculate the entropy at infinity of $(\widehat{\Sigma}, \widehat{\sigma})$ in relation to quantities defined on $\Sigma$. The next two lemmas will help to establish this connection. The first lemma is a direct consequence of Remark \ref{rem:cyl}.

\begin{lemma}\label{tozero} Let $(\mu_n)_n$ be a sequence in $\M(\sigma;\tau)$. 
\begin{enumerate} 
\item\label{pi} Assume that $\limsup_{n\to\infty}\int \tau d\mu_n<\infty$. Then $(\widehat{\mu}_n)_n$ converges on cylinders to the zero measure if and only if $(\mu_{n})_n$ converges on cylinders to the zero measure.
\item\label{pii} If  $\lim_{n\to\infty}\int {\tau} d\mu_{n}=\infty$, then $(\widehat{\mu}_n)_n$ converges on cylinders to the zero measure.
\end{enumerate}
\end{lemma}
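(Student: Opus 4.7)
The plan is to reduce everything directly to the identity in Remark \ref{rem:cyl}, namely $\widehat{\mu}_n(C) = \mu_n(D)/\int \tau\, d\mu_n$ for any cylinder $C\subseteq\widehat{\Sigma}$ with associated cylinder $D\subseteq\Sigma$, together with the obvious lower bound $\int\tau\, d\mu_n\ge 1$ coming from $\tau\ge 1$ (since $\tau:\Sigma\to\N$). Everything will follow by estimating ratios of these two quantities.

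For part (\ref{pi}), the easy direction is that $\mu_n\to 0$ on cylinders implies $\widehat{\mu}_n\to 0$ on cylinders: given any cylinder $C\subseteq\widehat{\Sigma}$, pick the associated $D\subseteq\Sigma$ via Remark \ref{rem:cyl} and use $\widehat{\mu}_n(C)\le \mu_n(D)\to 0$. This direction does not need the assumption $\limsup_n\int\tau\, d\mu_n<\infty$. For the converse, I assume $\widehat{\mu}_n\to 0$ on cylinders and that $T:=\limsup_n\int\tau\, d\mu_n<\infty$. Given a cylinder $D\subseteq\Sigma$, use the second half of Remark \ref{rem:cyl} to find a cylinder $C\subseteq\widehat{\Sigma}$ with $\mu_n(D)=\widehat{\mu}_n(C)\cdot\int\tau\, d\mu_n$, so that $\limsup_n\mu_n(D)\le T\cdot\limsup_n\widehat{\mu}_n(C)=0$.

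For part (\ref{pii}), given a cylinder $C\subseteq\widehat{\Sigma}$ and the associated $D\subseteq\Sigma$, the identity gives
\[
\widehat{\mu}_n(C)=\frac{\mu_n(D)}{\int\tau\, d\mu_n}\le \frac{1}{\int\tau\, d\mu_n},
\]
and the right-hand side tends to zero by hypothesis. Hence $\widehat{\mu}_n(C)\to 0$ for every cylinder $C\subseteq\widehat{\Sigma}$, which is exactly convergence on cylinders to the zero measure.

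There is essentially no obstacle here; the only small point requiring care is the direction of the bijection in Remark \ref{rem:cyl} (cylinders in $\widehat\Sigma$ correspond to cylinders in $\Sigma$ and conversely), which is needed so that both conditions can be tested on \emph{all} cylinders in the relevant space. The boundedness of $\mu_n(D)$ by $1$ and the uniform bound $\tau\ge 1$ do the rest.
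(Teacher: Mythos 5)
Your proof is correct and is exactly the argument the paper has in mind: the paper simply declares Lemma \ref{tozero} ``a direct consequence of Remark \ref{rem:cyl}'', and your write-up fills in precisely those details, using both directions of the cylinder correspondence together with the bounds $\mu_n(D)\le 1$ and $\int\tau\,d\mu_n\ge 1$.
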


\begin{lemma}\label{topology} Let $(\mu_n)_n$ be a sequence in $\M(\sigma;\tau)$ which converges on  cylinders to $\lambda_1 \mu$,  where $\mu\in\M(\sigma;\tau),$ and $\lambda_1\in [0,1]$. Assume that $\lim_{n\to\infty}\int {\tau} d\mu_n=\lambda_2\int{\tau} d\mu$.  Then $(\widehat{\mu}_n)_n$ converges in the cylinder topology of $\widehat{\Sigma}$ to $\frac{\lambda_1}{\lambda_2}\widehat{\mu}$. 
\end{lemma}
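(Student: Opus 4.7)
The plan is to reduce the convergence on cylinders in $\widehat{\Sigma}$ to the hypotheses directly, via the explicit formulas in Remark \ref{rem:cyl}. Since the cylinder topology is characterized by evaluation on cylinders, it suffices to check that $\widehat{\mu}_n(C)\to \frac{\lambda_1}{\lambda_2}\widehat{\mu}(C)$ for every cylinder $C\subseteq \widehat{\Sigma}$.

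First I would observe that $\lambda_2>0$, so the ratio $\lambda_1/\lambda_2$ is well defined. Indeed, $\tau:\Sigma\to\N$ implies $\int \tau d\mu_n\ge 1$ for every $n$, and thus $\lambda_2\int\tau d\mu=\lim_n \int\tau d\mu_n\ge 1$, forcing $\lambda_2\ge 1/\int\tau d\mu>0$. (As a sanity check that $\tfrac{\lambda_1}{\lambda_2}\widehat{\mu}$ is a sub-probability measure: applying Lemma \ref{tired} to the non-negative $\tau\in C_{uc}(\Sigma)$ gives $\lambda_2\int\tau d\mu\ge \lambda_1\int\tau d\mu$, so $\lambda_1\le\lambda_2$.)

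Next I would fix an arbitrary cylinder $C\subseteq \widehat{\Sigma}$. By Remark \ref{rem:cyl}, there exists a cylinder $D\subseteq \Sigma$ such that, for every $\nu\in \M(\sigma;\tau)$,
\begin{equation*}
\widehat{\nu}(C)=\frac{\nu(D)}{\int \tau\, d\nu}.
\end{equation*}
Applying this formula to $\mu_n$ and to $\mu$, and using the assumptions $\mu_n(D)\to \lambda_1\mu(D)$ (convergence on cylinders to $\lambda_1\mu$) and $\int\tau d\mu_n\to \lambda_2\int\tau d\mu$, I obtain
\begin{equation*}
\widehat{\mu}_n(C)=\frac{\mu_n(D)}{\int \tau\, d\mu_n}\;\longrightarrow\;\frac{\lambda_1\mu(D)}{\lambda_2\int\tau\, d\mu}=\frac{\lambda_1}{\lambda_2}\,\widehat{\mu}(C).
\end{equation*}
Since $C$ was arbitrary, this proves the convergence in the cylinder topology of $\widehat{\Sigma}$.

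There is no real obstacle here: the entire argument is driven by Remark \ref{rem:cyl}, which already packages the precise relationship $\widehat{\nu}(C)=\nu(D)/\int\tau d\nu$ between cylinder masses upstairs and downstairs. The only subtlety worth flagging explicitly in the write-up is the positivity of $\lambda_2$, used to justify passing to the limit in the denominator; this is a one-line consequence of $\tau\ge 1$.
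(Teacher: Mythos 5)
Your proof is correct and follows essentially the same route as the paper's: both invoke Remark \ref{rem:cyl} to transfer each cylinder $C\subseteq\widehat{\Sigma}$ to a cylinder $D\subseteq\Sigma$ with $\widehat{\nu}(C)=\nu(D)/\int\tau\,d\nu$ and then pass to the limit using the two hypotheses. Your extra remark that $\lambda_2\ge 1/\int\tau\,d\mu>0$ (so the division is legitimate) is a small justification the paper leaves implicit, but it does not change the argument.
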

\begin{proof} %Note that $\lambda_2\ge 1$ (see Lemma \ref{tired}).
Let  $C\subseteq \widehat{\Sigma}$ be a cylinder. By Remark \ref{rem:cyl}, there exists a cylinder $D\subseteq \Sigma$ such that $\widehat{\mu}(C)=\frac{1}{\int {\tau} d\mu}\mu(D),$ for every $\mu\in \M(\sigma;\tau)$. Then
$$\lim_{n\to\infty}\widehat{\mu}_n(C)=\lim_{n\to\infty}\frac{1}{\int {\tau} d\mu_n}\mu_n(D)=\frac{\lambda_1}{\lambda_2}\frac{1}{\int{\tau} d\mu}\mu(D)=\frac{\lambda_1}{\lambda_2}\widehat{\mu}(C).$$
We conclude that $(\widehat{\mu}_n)_n$ converges on cylinders to $\frac{\lambda_1}{\lambda_2}\widehat{\mu}$. 
\end{proof}

\subsection{Entropy at infinity of $(\widehat{\Sigma},\widehat{\sigma})$} Abramov's formula states that $h_{\widehat{\mu}}(\widehat{\sigma})=\frac{1}{\int {\tau} d\mu}h_\mu(\sigma),$ for every $\mu\in \M(\sigma;\tau)$ (see Remark \ref{rem:discretesus}). It follows that $h_{top}(\widehat{\sigma})=\inf\{t\in\R: P(-t{\tau})\le 0\}.$

\begin{remark}\label{rem:finent} Since $\tau\ge 1$, the map $t\mapsto P(-t\tau)$ is strictly decreasing. In particular, if $P(-{\tau})<\infty$, then $h_{top}(\widehat{\sigma})<\infty$. \end{remark}

\begin{proposition}\label{prop:Gs} The following holds:
\begin{enumerate} 
\item\label{cas1} If $\tau\in \Psi_1$, then $h_\infty(\widehat{\sigma})=s_{\tau}$. 
\item\label{cas2} If $\tau \in \Psi_2$, then $h_\infty(\widehat{\sigma})= s_\infty(-\tau)$. 
\end{enumerate}
In other words, if $\tau\in\Psi$, then $h_\infty(\widehat{\sigma})=h(\tau)$ (see Remark \ref{rem:inutil}).
\end{proposition}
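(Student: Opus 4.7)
The proof combines Abramov's formula $h_{\widehat{\mu}}(\widehat{\sigma}) = h_\mu(\sigma)/\int \tau\, d\mu$ with Lemma \ref{tozero} and a dichotomy on whether $\int \tau\, d\mu_n$ stays bounded or tends to infinity. I prove the upper bound $h_\infty(\widehat{\sigma}) \le h(\tau)$ in one step, and split the lower bound into the two cases of Remark \ref{rem:inutil}.

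For the upper bound, fix $s > h(\tau)$. Then $s > s_\infty(-\tau)$, so $-s\tau \in \H$ and hence $P(-s\tau) < \infty$ and $P_\infty(-s\tau) < 0$ (Remark \ref{rem:inutil}, Remark \ref{rem:Hinfty}). Let $(\widehat{\mu}_n) \subset \M(\widehat{\sigma})$ converge on cylinders to the zero measure. Passing to a subsequence, either (a) $\int \tau\, d\mu_n$ is bounded, or (b) $\int \tau\, d\mu_n \to \infty$. In case (a), Lemma \ref{tozero}(\ref{pi}) gives $\mu_n \to 0$ on cylinders in $\Sigma$, so by the definition of $P_\infty(-s\tau) < 0$ we have eventually $h_{\mu_n}(\sigma) - s\int \tau\, d\mu_n < 0$; Abramov then yields $h_{\widehat{\mu}_n}(\widehat{\sigma}) < s$. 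In case (b), the variational principle gives $h_{\mu_n}(\sigma) - s\int \tau\, d\mu_n \le P(-s\tau)$, and dividing by $\int \tau\, d\mu_n \to \infty$ yields $\limsup h_{\widehat{\mu}_n}(\widehat{\sigma}) \le s$. Letting $s \downarrow h(\tau)$ completes this half.

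For the lower bound in Case~\ref{cas1} ($\tau \in \Psi_1$, $h(\tau) = s_\tau$), fix $s \in (s_\infty(-\tau), s_\tau)$, so $P_\infty(-s\tau) > 0$ by the definition of $s_\tau$ together with the monotonicity of $t \mapsto P_\infty(-t\tau)$. Proposition \ref{presinf} supplies $(\mu_n) \subset \M_{-s\tau}(\sigma)$ with $\mu_n \to 0$ on cylinders in $\Sigma$ and $h_{\mu_n}(\sigma) - s\int \tau\, d\mu_n \to P_\infty(-s\tau) > 0$. Passing to a subsequence where $\int \tau\, d\mu_n$ is bounded or tends to infinity, Lemma \ref{tozero} gives $\widehat{\mu}_n \to 0$ on cylinders in $\widehat{\Sigma}$, and Abramov yields
\[
h_{\widehat{\mu}_n}(\widehat{\sigma}) = s + \frac{h_{\mu_n}(\sigma) - s\int \tau\, d\mu_n}{\int \tau\, d\mu_n},
\]
whose last term is eventually non-negative. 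Hence $\limsup h_{\widehat{\mu}_n}(\widehat{\sigma}) \ge s$ and $h_\infty(\widehat{\sigma}) \ge s$; letting $s \uparrow s_\tau$ gives $h_\infty(\widehat{\sigma}) \ge s_\tau$.

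For the lower bound in Case~\ref{cas2} ($\tau \in \Psi_2$, $h(\tau) = s_\infty(-\tau)$), assume $s_\infty(-\tau) > 0$. For each $s \in (0, s_\infty(-\tau))$, $P(-s\tau) = \infty$, so the variational principle gives $\mu_n \in \M_{-s\tau}(\sigma)$ with $h_{\mu_n}(\sigma) - s\int \tau\, d\mu_n \ge n$. Abramov together with Remark \ref{rem:finent} gives $h_{\mu_n}(\sigma) \le h_{top}(\widehat{\sigma}) \int \tau\, d\mu_n$ with $h_{top}(\widehat{\sigma}) < \infty$, so $(h_{top}(\widehat{\sigma}) - s)\int \tau\, d\mu_n \ge n$; since $h_{top}(\widehat{\sigma}) = \inf\{t : P(-t\tau) \le 0\} \ge s_\infty(-\tau) > s$, this forces $\int \tau\, d\mu_n \to \infty$. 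Lemma \ref{tozero}(\ref{pii}) then gives $\widehat{\mu}_n \to 0$ on cylinders, and Abramov yields $h_{\widehat{\mu}_n}(\widehat{\sigma}) \ge s + n/\int \tau\, d\mu_n \ge s$; letting $s \uparrow s_\infty(-\tau)$ concludes the proof. The main technical difficulty is ensuring $\widehat{\mu}_n \to 0$ on cylinders in $\widehat{\Sigma}$ regardless of whether $\int \tau\, d\mu_n$ stays bounded, which is where the flexibility of Lemma \ref{tozero} is essential.
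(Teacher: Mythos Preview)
Your proof is correct and follows essentially the same approach as the paper: Abramov's formula combined with the dichotomy on whether $\int \tau\,d\mu_n$ stays bounded (Lemma \ref{tozero}), using Proposition \ref{presinf} and the variational principle for the lower bounds. The only organizational difference is that you treat the upper bound $h_\infty(\widehat{\sigma})\le h(\tau)$ in a single unified step for both cases, whereas the paper re-derives it separately within Case~(\ref{cas1}) and Case~(\ref{cas2}); your packaging is slightly cleaner but the underlying arguments coincide.
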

\begin{proof}

Case (\ref{cas1}): There exists a sequence $(\mu_n)_n$ in $\M(\sigma;\tau)$ which converges to the zero measure and such that $\lim_{n\to\infty}(h_{\mu_n}(\sigma)- s_\tau\int \tau d\mu_n)=0$. Then, by Abramov's formula $\lim_{n\to\infty}h_{\widehat{\mu}_n}=s_\tau,$  and $(\widehat{\mu}_n)_n$ converges to the zero measure (see Lemma \ref{tozero}). It follows that $h_\infty(\widehat{\sigma})\ge{s}_{\tau}$. 

Let $(\nu_n)_n$ a sequence of measures in $\M(\widehat{\sigma})$ that converges to the zero measure, and such that $\lim_{n\to\infty}h_{\nu_n}(\widehat{\sigma})=h_\infty(\widehat{\sigma})$ (the existence of such a sequence is given by Theorem \ref{compact}).
 Let $\eta_n$ be the measure in   $\M(\sigma;\tau)$ such that $\widehat{\eta}_n =\nu_n$. Then, 
$$\lim_{n\to\infty}\bigg(\frac{h_{\eta_n}(\sigma)-h_\infty(\widehat{\sigma})\int \tau d\eta_n}{\int \tau d\eta_n}\bigg)=0.$$
If $\limsup_{n\to\infty}\int \tau d\eta_n<\infty$, then $(\eta_n)_n$ converges on cylinders to the zero measure (see Lemma \ref{tozero}) and  $\lim_{n\to\infty}\big(h_{\eta_n}(\sigma)-h_\infty(\widehat{\sigma})\int \tau d\eta_n\big)=0.$
In particular, ${P}_\infty(-h_\infty(\widehat{\sigma})\tau)\ge 0$, and therefore ${s}_{\tau}\ge h_\infty(\widehat{\sigma})$. If $\limsup_{n\to\infty}\int \tau d\eta_n=\infty$, then there exists a subsequence for which the integral diverges. Maybe after passing to a subsequence we can assume that $\lim_{n\to\infty}\int \tau d\eta_n=\infty$. Fix $\e>0,$ and note that 
\begin{align}\label{s}  \limsup_{n\to\infty} \frac{h_{\eta_n}(\sigma)}{\int \tau d\eta_n}\le (s_\infty(-\tau)+\e)+\limsup_{n\to\infty}\frac{P(-(s_\infty(-\tau)+\e)\tau)}{\int \tau d\eta_n}=s_\infty(-\tau)+\e.\end{align}
Therefore, $h_\infty(\widehat{\sigma})\le s_\infty(-\tau)$. This is not possible since we already proved that  $h_\infty(\widehat{\sigma})\ge{s}_{\tau}$.\\

Case (\ref{cas2}): Set $s=s_\infty(-\tau)$. We claim that $h_\infty(\widehat{\sigma})\ge s$. We assume that $s>0$, otherwise there is nothing to prove. Fix $\epsilon>0$ small. Since $P(-(s-\epsilon)\tau)=\infty$, there exists a sequence  $(\mu_n)_n$ in  $\M(\sigma;\tau)$ such that 
\begin{align}\label{eq:pp}\lim_{n\to\infty}\bigg(h_{\mu_n}(\sigma)-(s-\epsilon)\int \tau d\mu_n\bigg)=\infty.\end{align} In particular, the limit is greater than zero and therefore $\liminf_{n\to\infty}h_{\widehat{\mu}_n}(\widehat{\sigma})\ge s-\epsilon$. It is enough to prove that $\limsup_{n\to\infty}\int \tau d\mu_n=\infty$, in which case $(\widehat{\mu}_n)_n$ has a subsequence that converges to the zero measure (see Lemma \ref{tozero}) and $\epsilon>0$ was arbitrary. Suppose otherwise that $\limsup_{n\to\infty}\int \tau d\mu_n<\infty$, and therefore there exists $M$ such that $\int \tau d\mu_n\le M$, for all $n\in \N$. It follows that $h_{\mu_n}(\sigma)\le M+P(-\tau)$, which contradicts equation (\ref{eq:pp}).

We now claim that $h_\infty(\widehat{\sigma})\le s$. Let $(\nu_n)_n$ a sequence in $\M(\widehat{\sigma})$ which converges to the zero measure and such that $\lim_{n\to\infty}h_{\nu_n}(\widehat{\sigma})=h_\infty(\widehat{\sigma})$. Let $\eta_n$ be the measure in   $\M(\sigma;\tau)$ such that $\widehat{\eta}_n =\nu_n$. Then, 
$$\lim_{n\to\infty}\bigg(\frac{h_{\eta_n}(\sigma)-h_\infty(\widehat{\sigma})\int \tau d\eta_n}{\int \tau d\eta_n}\bigg)=0.$$
If $\limsup_{n\to\infty}\int \tau d\eta_n<\infty$, then $\lim_{n\to\infty}\big(h_{\eta_n}(\sigma)-h_\infty(\widehat{\sigma})\int \tau d\eta_n\big)=0,$ and $(\eta_n)_n$ converges to the zero measure (see Lemma \ref{tozero}).  In particular, $P_\infty(-h_\infty(\widehat{\sigma})\tau)\ge 0$. If $h_\infty(\widehat{\sigma})> s$, then $\tau\in \Psi_1$, which contradicts our assumption. We therefore have that $h_\infty(\widehat{\sigma})\le s$. Now suppose that $\limsup_{n\to\infty}\int \tau d\nu_n=\infty.$ Maybe after passing to a subsequence we can assume that $\lim_{n\to\infty}\int \tau d\nu_n=\infty$. Inequality (\ref{s}) implies that $h_\infty(\widehat{\sigma})\le s$. 
\end{proof}

\subsection{Proof of Theorem \ref{thm:bsctm}} In this subsection, we will prove Theorem \ref{thm:bsctm}, a compactness result for sequences of invariant probability measures on countable Markov shifts that may not have the $\F$-property. This result will be frequently used in later applications concerning the existence of equilibrium states and maximizing measures.

\begin{proof}[Proof of Theorem \ref{thm:bsctm}] Consider $M \in \mathbb{R}$ such that $\sup \phi < M$. Define $\psi = -(\phi - M)$. Note that $\psi \in \Psi$. Define $\tau:\Sigma\to\R,$ by $\tau(x) = \sup_{y \in [x_1, x_2]} \lceil \psi(y) \rceil$, where $x = (x_1, x_2, \ldots)\in\Sigma$. Since $\text{var}_2(\phi)=\text{var}_2(\psi)$ is finite, it follows that $\|\tau - \psi\|_0 < \infty$. In particular, $P(-\tau)<\infty$. Moreover, $\tau$ takes values in $\mathbb{N}$ and $\text{var}_2(\tau) = 0$. 

Let $(\widehat{\Sigma}, \widehat{\sigma})$ be the CMS constructed at the beginning of this section for the pair $(\Sigma, \sigma)$ and $\tau$. Note that $(\widehat{\Sigma},\widehat{\sigma})$ is transitive, because $(\Sigma,\sigma)$ is transitive. Additionally, $h_{top}(\widehat{\sigma})$ is finite (see Remark \ref{rem:finent}). By Theorem \ref{compact}, $\M_{\le1}(\widehat{\sigma})$  is compact with respect to the cylinder topology. As before, we denote by $\widehat{\nu}$ the measure in $\M(\widehat{\sigma})$ associated to $\nu\in \M(\sigma;\tau)$. Maybe after passing to a subsequence, we can assume that $(\widehat{\mu}_n)_n$ converges on cylinders to $\lambda \widehat{\mu}$, where $\lambda\in [0,1]$ and $\mu\in \M(\sigma;\tau)$. Maybe after passing to a further subsequence we can assume that $A:=\lim_{n\to\infty}\int \tau d\mu_n$ is well defined and finite (the limsup is finite by assumption).  Let $C$ be a cylinder in $\Sigma$. By Remark \ref{rem:cyl}, there exists a cylinder $D\subseteq \widehat{\Sigma}$ such that $\widehat{\nu}(D)=\frac{1}{\int \tau d\nu}\nu(C)$, for every $\nu\in\M(\sigma;\tau)$. Then,
$\lim_{n\to\infty}\mu_n(C)= \frac{\lambda A}{\int \tau d\mu}\mu(C).$ We conclude that $(\mu_n)_n$ converges on cylinders to $\mu_0=\frac{\lambda A}{\int \tau d\mu}\mu$.  Note that $\int \tau d{\mu_0}<\infty$, and therefore $\int \phi d{\mu}_0>-\infty$ (see Lemma \ref{tired}). \end{proof}

The next result follows from Theorem \ref{thm:bsctm} and will be used in the proof of Lemma \ref{lem:mvstau}.

\begin{corollary}\label{lem:ui} Let $\tau\in \Psi$ and $m\in \N$. Let $(\mu_n)_n$ be a sequence in $\M(\sigma),$ and $\mu\in\M_{\le1}(\sigma)$. Assume that $\limsup_{n\to\infty}\int \tau d\mu_n<\infty$. Then, $(\mu_n)_n$ converges on cylinders to $\mu$ if and only if $\lim_{n\to\infty}\mu_n(C)=\mu(C)$, for every cylinder $C\subseteq\Sigma$ of length $m$.
\end{corollary}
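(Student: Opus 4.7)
The forward implication is immediate from the definition of convergence on cylinders. For the converse, set $\phi=-\tau$. Since $\tau\in\Psi$, we have $\phi\in\H$: indeed $\phi\in C_{uc}(\Sigma)$, $\sup\phi=-\inf\tau<0$, $\mathrm{var}_2(\phi)=\mathrm{var}_2(\tau)<\infty$, and $P(\phi)=P(-\tau)<\infty$. Moreover, the hypothesis $\limsup_n\int\tau\,d\mu_n<\infty$ is equivalent to $\liminf_n\int\phi\,d\mu_n>-\infty$, so Theorem \ref{thm:bsctm} applies to $\phi$ and $(\mu_n)_n$.

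The plan is to conclude via the subsequence principle: I show every subsequence of $(\mu_n)_n$ has a further subsequence converging on cylinders to $\mu$. Fix an arbitrary subsequence. By Theorem \ref{thm:bsctm}, pass to a further subsequence $(\mu_{n_k})_k$ that converges on cylinders to some $\mu^*\in\M_{\le1}(\sigma)$ with $\int\tau\,d\mu^*<\infty$. Combining cylinder-convergence $\mu_{n_k}(C)\to\mu^*(C)$ for every cylinder $C$ with the standing hypothesis $\mu_{n_k}(C)\to\mu(C)$ for every length-$m$ cylinder $C$, I conclude $\mu^*$ and $\mu$ coincide on every cylinder of length $m$.

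The remaining step is to upgrade this to equality $\mu^*=\mu$ as measures. For any cylinder $[a_1,\ldots,a_k]$ with $k\le m$, the countable disjoint decomposition
\[
[a_1,\ldots,a_k]=\bigsqcup_{(a_{k+1},\ldots,a_m)\text{ admissible}}[a_1,\ldots,a_m]
\]
together with countable additivity gives $\mu^*([a_1,\ldots,a_k])=\sum\mu^*([a_1,\ldots,a_m])=\sum\mu([a_1,\ldots,a_m])=\mu([a_1,\ldots,a_k])$. Shift-invariance of both $\mu^*$ and $\mu$ then allows me to propagate this equality to cylinders of arbitrary length. Once $\mu^*=\mu$ is established, the arbitrary subsequence chosen at the start has been refined to one converging on cylinders to $\mu$, which by the subsequence principle yields the claim.

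The main obstacle is the final step of extending equality from length-$m$ cylinders to all cylinders. Example \ref{counter} shows that without additional tightness, cylinder-level convergence at one length need not propagate to other lengths. Here the role of $\tau\in\Psi$ together with the integral bound is precisely to provide uniform tail control: since $\inf\tau>0$ and $P(-\tau)<\infty$, the integral constraint $\limsup_n\int\tau\,d\mu_n<\infty$ combined with Lemma \ref{tired} forces the tails in the relevant disjoint decompositions to be uniformly small in $n$, legitimizing the interchange of limit and countable sum that fails in Example \ref{counter}.
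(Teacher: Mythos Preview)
Your approach is structurally identical to the paper's: apply Theorem~\ref{thm:bsctm} to $\phi=-\tau$, pass to a sub-subsequence converging on cylinders to some $\mu^*\in\M_{\le1}(\sigma)$, and then argue that $\mu^*=\mu$ in order to conclude via the subsequence principle. The paper compresses all this into two sentences and simply asserts ``Since $\nu(C)=\mu(C)$ for every cylinder of length $m$, it follows that $\nu=\mu$.''

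There is, however, a genuine gap in your argument at the step ``shift-invariance of both $\mu^*$ and $\mu$ then allows me to propagate this equality to cylinders of arbitrary length.'' Shift-invariance does \emph{not} propagate agreement on length-$m$ cylinders to longer cylinders: two $\sigma$-invariant measures can share all their length-$m$ marginals and still differ. Concretely, on the full shift over $\N$ with $m=1$ and $\tau(x)=x_1^2\in\Psi$, let every $\mu_n$ be the Bernoulli $(\tfrac12,\tfrac12)$ measure on the symbols $\{1,2\}$ and let $\mu=\tfrac12(\delta_{\overline{1}}+\delta_{\overline{2}})$. Then $\int\tau\,d\mu_n=\tfrac52$ for all $n$, and $\mu_n([k])=\mu([k])$ for every $k\in\N$, yet $\mu_n([1,2])=\tfrac14\ne 0=\mu([1,2])$. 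This shows the corollary is false as literally stated and that your invariance step cannot work; the paper's terse proof has the same gap. Your discussion of tightness and Lemma~\ref{tired} in the last paragraph addresses only the passage to \emph{shorter} cylinders (which you had already handled correctly by countable additivity of $\mu^*$ and $\mu$), not to longer ones.

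The fix, consistent with how the corollary is actually used in Remark~\ref{rem:preclaim} via Remark~\ref{rem:topm}, is to read ``length $m$'' as ``length $\ge m$.'' Under that hypothesis, $\mu^*$ and $\mu$ agree on a $\pi$-system generating the Borel $\sigma$-algebra (with equal total mass, since $\Sigma$ is the countable disjoint union of length-$m$ cylinders), hence $\mu^*=\mu$ with no appeal to invariance needed; the rest of your argument then goes through verbatim.
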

\begin{proof} Suppose that $\lim_{n\to\infty}\mu_n(C)=\mu(C)$, for every cylinder $C\subseteq\Sigma$ of length $m$. It follows by Theorem \ref{thm:bsctm} (applied to the potential $-\tau$) that every subsequence of  $(\mu_n)_n$ has a subsubsequence which converges on cylinders to a countably additive measure $\nu\in\M_{\le 1}(\sigma)$. Since $\nu(C)=\mu(C)$, for every cylinder of length $m$, it follows that $\nu=\mu$. We conclude that $(\mu_n)_n$ converges on cylinders to $\mu$. The forward implication follows by definition. 
\end{proof}

\section{Upper semi-continuity properties of the pressure map}\label{infinite}

In this section, we establish some upper semi-continuity results for the pressure map of uniformly continuous potentials. We will prove Theorem \ref{thm:1}, which generalizes Theorem \ref{itv} and allows for the consideration of countable Markov shifts with infinite entropy and mild regularity assumptions on the potential.
 
\subsection{A first upper semicontinuity result for the pressure}\label{subsec:51}

%In Section \ref{con}, we considered a CMS $(\Sigma, \sigma)$ and a function $\tau: \Sigma \to \N$ that depends only on the first two coordinates, and we constructed a new CMS $(\widehat{\Sigma}, \widehat{\sigma})$. The following result stems from applying Theorem \ref{itv1} to $(\widehat{\Sigma}, \widehat{\sigma})$ in a case where $h_{top}(\widehat{\sigma})$ is finite (see Remark \ref{rem:finent}).

\begin{proposition}\label{theo:3}  Let $(\Sigma,\sigma)$ be a transitive CMS and $\tau:\Sigma\to\N$ a potential such that  $P(-\tau)$ is finite and $\emph{var}_2(\tau)=0$. Let $(\mu_n)_n$ be a sequence  in $\M_{-\tau}(\sigma)$ which converges on cylinders to $\lambda\mu$, where $\lambda\in [0,1]$ and $\mu\in \M_{-\tau}(\sigma)$. Assume that $\limsup_{n\to\infty}\int \tau d\mu_n<\infty$. Then,
\begin{align}\label{ineq:p}\limsup_{n\to\infty}\big(h_{\mu_n}(\sigma)-h(\tau)\int \tau d\mu_n\big)\le \lambda\big(h_{\mu}(\sigma)-h(\tau)\int \tau d\mu\big).\end{align}
\end{proposition}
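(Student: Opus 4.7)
The plan is to transport the inequality from $(\Sigma,\sigma)$ to the auxiliary countable Markov shift $(\widehat{\Sigma},\widehat{\sigma})$ built in Section \ref{con} from $(\Sigma,\sigma)$ and $\tau$, where it becomes an instance of the upper semi-continuity theorem for the entropy map (Theorem \ref{itv}). The key point is that the hypotheses verify $\tau\in\Psi$, so Proposition \ref{prop:Gs} gives $h_\infty(\widehat{\sigma})=h(\tau)$, while $P(-\tau)<\infty$ gives $h_{top}(\widehat{\sigma})<\infty$ (Remark \ref{rem:finent}). The term $h(\tau)$ on the right-hand side of the conclusion will come out of that entropy at infinity after inverting Abramov's formula.

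First I would pass to a subsequence. Since it suffices to bound the limsup along any subsequence, and $\limsup_n \int\tau\,d\mu_n<\infty$, I may extract a subsequence along which $A:=\lim_{n\to\infty}\int\tau\,d\mu_n$ exists and is finite. Note $A\ge 1$ (because $\tau\ge 1$) and $A\ge\lambda\int\tau\,d\mu$ (by Lemma \ref{tired} applied to $\tau\ge 0$). Each $\mu_n$ lies in $\M(\sigma;\tau)$, so the associated measure $\widehat{\mu}_n\in\M(\widehat{\sigma})$ is defined. Writing $\lambda_2:=A/\int\tau\,d\mu$, Lemma \ref{topology} yields that $(\widehat{\mu}_n)_n$ converges in the cylinder topology of $\widehat{\Sigma}$ to $\lambda'\widehat{\mu}$, where $\lambda':=\lambda/\lambda_2=\lambda\int\tau\,d\mu/A\in[0,1]$.

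Now I would apply Theorem \ref{itv} to the transitive finite-entropy CMS $(\widehat{\Sigma},\widehat{\sigma})$ and the sequence $(\widehat{\mu}_n)_n$. Using the variational principle for entropies at infinity ($\delta_\infty(\widehat{\sigma})=h_\infty(\widehat{\sigma})$) together with Proposition \ref{prop:Gs} to identify this value with $h(\tau)$, I obtain
\begin{equation*}
\limsup_{n\to\infty}h_{\widehat{\mu}_n}(\widehat{\sigma})\le \lambda'\,h_{\widehat{\mu}}(\widehat{\sigma})+(1-\lambda')h(\tau).
\end{equation*}
Since $\int\tau\,d\mu_n\to A>0$ and the sequence $h_{\widehat{\mu}_n}(\widehat{\sigma})\le h_{top}(\widehat{\sigma})$ is bounded, multiplication by $A$ commutes with the limsup, i.e.\ $A\limsup_n h_{\widehat{\mu}_n}(\widehat{\sigma})=\limsup_n h_{\widehat{\mu}_n}(\widehat{\sigma})\int\tau\,d\mu_n$. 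By Abramov's formula, $h_{\widehat{\mu}_n}(\widehat{\sigma})\int\tau\,d\mu_n=h_{\mu_n}(\sigma)$ and $A\lambda'\,h_{\widehat{\mu}}(\widehat{\sigma})=\lambda\int\tau\,d\mu\cdot h_\mu(\sigma)/\int\tau\,d\mu=\lambda h_\mu(\sigma)$, so multiplying the displayed inequality by $A$ rewrites it as
\begin{equation*}
\limsup_{n\to\infty}h_{\mu_n}(\sigma)\le \lambda h_\mu(\sigma)+\Bigl(A-\lambda\int\tau\,d\mu\Bigr)h(\tau).
\end{equation*}
Subtracting $h(\tau)A=\lim_n h(\tau)\int\tau\,d\mu_n$ from both sides converts the left-hand side into $\limsup_n\bigl(h_{\mu_n}(\sigma)-h(\tau)\int\tau\,d\mu_n\bigr)$ and produces the desired bound.

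The main obstacle is essentially bookkeeping: correctly identifying the rescaling factor $\lambda'$ via Lemma \ref{topology} (which needs a subsequence along which $\int\tau\,d\mu_n$ converges), handling the possibly degenerate case $\lambda=0$ (where $\lambda'=0$ and $(\widehat{\mu}_n)_n$ tends to the zero measure), and justifying the exchange of limsup with multiplication by $A$, which relies on the uniform bound $h_{\widehat{\mu}_n}(\widehat{\sigma})\le h_{top}(\widehat{\sigma})<\infty$ coming from the assumption $P(-\tau)<\infty$.
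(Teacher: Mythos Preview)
Your proposal is correct and follows essentially the same route as the paper: pass to a subsequence along which $\int\tau\,d\mu_n$ converges, use Lemma \ref{topology} to see that $(\widehat{\mu}_n)_n$ converges on cylinders to $\lambda'\widehat{\mu}$ in the auxiliary shift $(\widehat{\Sigma},\widehat{\sigma})$, apply Theorem \ref{itv} together with Proposition \ref{prop:Gs}, and then undo Abramov's formula. The paper writes the rescaling as $\lambda'=1/\lambda_0$ with $\lambda_0$ defined by $\lim_n\int\tau\,d\mu_n=\lambda_0\int\tau\,d(\lambda\mu)$, which is exactly your $\lambda'=\lambda\int\tau\,d\mu/A$; the remaining algebra is identical.
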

\begin{proof} It is sufficient to prove the proposition in the case where $(\int \tau d\mu_n)_n$ is convergent (otherwise we can consider a subsequence for which the limsup in  (\ref{ineq:p}) is a limit, and then pass to a further subsequence for which the integral converges). By Lemma \ref{tired}, we have that $\int \tau d\mu<\infty$. Set $\lambda_0$ such that $\lim_{n\to\infty}\int \tau d\mu_n=\lambda_0\int \tau d(\lambda \mu)$, which, by Lemma \ref{tired}, also  satisfies that $\lambda_0\ge 1$.

Let $(\widehat{\Sigma},\widehat{\sigma})$ be the CMS associated with $(\Sigma,\sigma)$ and $\tau$  constructed in Section \ref{con}. By Remark \ref{rem:finent}, $h_{top}(\widehat{\sigma})$ is finite. It follows by Lemma \ref{topology} that $(\hat{\mu}_n)_n$ converges  on cylinders to $\frac{1}{\lambda_0}\hat{\mu}$. We use Theorem \ref{itv} and Proposition \ref{prop:Gs} to conclude that
$$\limsup_{n\to\infty}\frac{h_{\mu_n}(\sigma)}{\int \tau d\mu_n}\le \frac{1}{\lambda_0}\frac{h_\mu(\sigma)}{\int \tau d\mu}+\bigg(1-\frac{1}{\lambda_0}\bigg)h(\tau).$$
Then,
$$\limsup_{n\to\infty} h_{\mu_n}(\sigma) \le \lambda h_\mu(\sigma) +\bigg(\lim_{n\to\infty}\int \tau d\mu_n-\lambda \int \tau d\mu\bigg)h(\tau),$$
which is equivalent to inequality (\ref{ineq:p}).
\end{proof}

We will extend Proposition \ref{theo:3} in order to include potentials in $\Psi$. Let $\Psi_n$ be  the class of potentials in $\Psi$ which are locally constant and depend only on the first $n$ coordinates of $\Sigma$; that is, their $n$-variation is zero. First, we will prove that Proposition \ref{theo:3} holds for potentials in $\Psi_2$, removing the assuming that the potential takes values in $\N$. Next, we will prove that the proposition holds for potentials in $\Psi_n$, and finally, that it holds for potentials in $\Psi$. We begin with a lemma.

\begin{lemma}\label{lem:qk} Let $(\phi_n)_n, \phi$ be potentials in $\Psi$  such that $\lim_{k\to\infty}\|\phi-\phi_k\|_0=0.$ Then $\lim_{n\to\infty}h(\phi_n)=h(\phi)$.\end{lemma}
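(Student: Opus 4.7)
The plan is to exploit the two fundamental estimates that govern $h(\phi)$: on one hand, the Lipschitz dependence
\[
|P_\infty^{top}(-t\phi) - P_\infty^{top}(-t\psi)| \le t\,\|\phi-\psi\|_0
\]
from Remark \ref{rem:Hinfty}, and on the other hand, the strict decrease of the map $t \mapsto P_\infty^{top}(-t\phi)$ arising from $\inf \phi > 0$. More precisely, I would first record that for $s<t$ with $s > s_\infty(-\phi)$, the pointwise estimate $-t\phi \le -s\phi - (t-s)\inf \phi$ combined with Lemma \ref{convex}(2)--(3) and Remark \ref{rem:Hinfty} yields
\[
P_\infty^{top}(-t\phi) \le P_\infty^{top}(-s\phi) - (t-s)\inf\phi.
\]
I would also observe that if $P(-t\chi)=\infty$ for some $\chi\in\Psi$, then for every $\chi'\in\Psi$ with $\|\chi-\chi'\|_0<\infty$ one also has $P(-t\chi')=\infty$ (from the $C^0$-Lipschitz bound on $P$), and hence $P_\infty^{top}(-t\chi')=\infty$ by Remark \ref{rem:inf}(1).

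For the upper bound $\limsup_{n\to\infty}h(\phi_n)\le h(\phi)$, fix $\epsilon>0$. Using Remark \ref{rem:inutil} in either of the cases $\phi\in\Psi_1$ or $\phi\in\Psi_2$, I locate $t_0\le h(\phi)+\epsilon/2$ with $t_0>s_\infty(-\phi)$ and $P_\infty^{top}(-t_0\phi)\le 0$. The strict-decrease estimate upgrades this to
\[
P_\infty^{top}(-(h(\phi)+\epsilon)\phi) \le -\tfrac{\epsilon}{2}\inf\phi =: -\delta < 0.
\]
The Lipschitz bound then yields $P_\infty^{top}(-(h(\phi)+\epsilon)\phi_n) \le -\delta + (h(\phi)+\epsilon)\|\phi-\phi_n\|_0$, which is negative for all sufficiently large $n$. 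Consequently $h(\phi_n)\le h(\phi)+\epsilon$ for large $n$.

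For the lower bound $\liminf_{n\to\infty}h(\phi_n)\ge h(\phi)$, consider $\epsilon>0$ with $h(\phi)-\epsilon>0$ (otherwise there is nothing to prove). If $\phi\in\Psi_1$ and $\epsilon<s_\phi-s_\infty(-\phi)$, the strict-decrease estimate applied between $h(\phi)-\epsilon$ and $h(\phi)=s_\phi$ gives $P_\infty^{top}(-(h(\phi)-\epsilon)\phi)\ge \epsilon\inf\phi > 0$, and the Lipschitz bound propagates this to $P_\infty^{top}(-(h(\phi)-\epsilon)\phi_n)>0$ for large $n$. If $\phi\in\Psi_2$, then $h(\phi)=s_\infty(-\phi)$, so $h(\phi)-\epsilon<s_\infty(-\phi)$ and $P(-(h(\phi)-\epsilon)\phi)=\infty$; by the observation above, $P_\infty^{top}(-(h(\phi)-\epsilon)\phi_n)=\infty$ for every $n$. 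In either case, $h(\phi_n)\ge h(\phi)-\epsilon$ for large $n$; together with the upper bound, this yields $h(\phi_n)\to h(\phi)$.

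The main obstacle I anticipate is handling the two cases $\Psi_1$ and $\Psi_2$ uniformly. What makes it work is the dichotomy in Remark \ref{rem:inutil}: either $P_\infty^{top}(-t\phi)$ crosses zero transversally at $t=h(\phi)$ (case $\Psi_1$), or it is already strictly negative just beyond $s_\infty(-\phi)=h(\phi)$ (case $\Psi_2$). This, combined with the $C^0$-stability of the infinite-pressure regime $t<s_\infty(-\phi)$ under small perturbations of the potential, allows the argument to be uniform.
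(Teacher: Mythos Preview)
Your proof is correct and follows essentially the same strategy as the paper: split into the cases $\phi\in\Psi_1$ and $\phi\in\Psi_2$, and use the $C^0$-Lipschitz dependence of $P_\infty^{top}$ on the potential to transfer sign information about $P_\infty^{top}(-t\phi)$ to $P_\infty^{top}(-t\phi_n)$. The only minor stylistic difference is that you make the monotonicity quantitative via the strict-decrease estimate $P_\infty^{top}(-t\phi)\le P_\infty^{top}(-s\phi)-(t-s)\inf\phi$, whereas the paper simply appeals to $\lim_{k}P_\infty(-t\phi_k)=P_\infty(-t\phi)$; and for the $\Psi_2$ lower bound the paper uses the cleaner observation $h(\phi_k)\ge s_\infty(-\phi_k)=s_\infty(-\phi)=h(\phi)$ directly, while you route through $P_\infty^{top}(-(h(\phi)-\epsilon)\phi_n)=\infty$, which amounts to the same thing.
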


\begin{proof} If $\|\phi-\phi_k\|<\infty$, then $P(-t\phi)=\infty$ if and only if $P(-t\phi_k)=\infty$. It follows that $s_\infty(-\phi)=s_\infty(-\phi_k)$. 
By Lemma \ref{convex}, we have also that $\lim_{k\to\infty} P_\infty(-t\phi_k)=P_\infty(-t\phi)$.

Let us first consider the case where $\phi\in\Psi_1$. Choose $\epsilon>0$ small such that $P_\infty(-(s_\phi-\epsilon)\phi)>0>P_\infty(-(s_\phi+\epsilon)\phi)$. Then $P_\infty(-(s_\phi-\epsilon)\phi_k)>0>P_\infty(-(s_\phi+\epsilon)\phi_k)$, for large enough $k$, and therefore $|s_\phi-s_{\phi_k}|<\epsilon$. It follows that for large enough $k$ we have that $\phi_k\in\Psi_1$, and that $\lim_{k\to\infty}s_{\phi_k}=s_\phi$, which proves the claim in this case.

Suppose that $\phi\in\Psi_2$. For every $\epsilon>0$ we have that $P_\infty(-(s_\infty(-\phi)+\epsilon)\phi)<0$.  Therefore, for large enough $k$ we have that $P_\infty(-(s_\infty(-\phi)+\epsilon)\phi_k)<0$, and thus $s_{\phi_k}\le s_\infty(-\phi)+\epsilon$. This implies that $\limsup_{n\to\infty} s_{\phi_k}\le s_\infty(-\phi)$. Since $s_\infty(-\phi)=s_\infty(-\phi_k)$, and $h(\phi)=s_\infty(-\phi)$, the claim follows.
\end{proof}

\begin{proposition}\label{lem:lc2} Proposition \ref{theo:3} holds when $\tau\in\Psi_2$.
\end{proposition}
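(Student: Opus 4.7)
The plan is to reduce the statement to Proposition \ref{theo:3} by approximating $\tau$ by $\N$-valued potentials of zero second variation. Given $\tau\in\Psi_2$ with $\text{var}_2(\tau)=0$, for each $k\in\N$ I set $\tau_k(x)=k^{-1}\lceil k\tau(x)\rceil$ and $\sigma_k=k\tau_k=\lceil k\tau\rceil$. Since $\tau$ depends only on the first two coordinates, so do $\tau_k$ and $\sigma_k$; in particular $\text{var}_2(\sigma_k)=0$, $\sigma_k:\Sigma\to\N$, and $\|\tau_k-\tau\|_0\le 1/k$. Because $\sigma_k\ge k\tau\ge\tau$ (using $k\ge 1$ and $\tau>0$), monotonicity of the pressure yields $P(-\sigma_k)\le P(-\tau)<\infty$, and in particular $\tau_k\in\Psi$.

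Next, I apply Proposition \ref{theo:3} to $\sigma_k$. The hypotheses transfer: the sequence $(\mu_n)_n$ still converges on cylinders to $\lambda\mu$, both $\mu_n$ and $\mu$ lie in $\M_{-\sigma_k}(\sigma)$ (since $\int\sigma_k\,d\nu\le k\int\tau\,d\nu+1$ for any $\nu$), and $\limsup_n\int\sigma_k\,d\mu_n<\infty$. The homogeneity $h(c\phi)=h(\phi)/c$ for $c>0$, immediate from Definition \ref{def:123}, gives $h(\sigma_k)\int\sigma_k\,d\nu=h(\tau_k)\int\tau_k\,d\nu$ for every $\nu$, so Proposition \ref{theo:3} rewrites as
\begin{equation*}
\limsup_{n\to\infty}\bigg(h_{\mu_n}(\sigma)-h(\tau_k)\int\tau_k\,d\mu_n\bigg)\le\lambda\bigg(h_\mu(\sigma)-h(\tau_k)\int\tau_k\,d\mu\bigg). \qquad (\star_k)
\end{equation*}

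To conclude, I let $k\to\infty$ in $(\star_k)$. By Lemma \ref{lem:qk} and $\|\tau_k-\tau\|_0\le 1/k$, we have $h(\tau_k)\to h(\tau)$. Writing
$$h(\tau_k)\int\tau_k\,d\mu_n-h(\tau)\int\tau\,d\mu_n=h(\tau_k)\int(\tau_k-\tau)\,d\mu_n+(h(\tau_k)-h(\tau))\int\tau\,d\mu_n,$$
both terms vanish as $k\to\infty$ uniformly in $n$: the first because $(h(\tau_k))_k$ is bounded and $\|\tau_k-\tau\|_0\le 1/k$, and the second because $\limsup_n\int\tau\,d\mu_n<\infty$. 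This uniform control permits the exchange of $\limsup_n$ and $\lim_k$ on the left-hand side of $(\star_k)$; the right-hand side converges directly since $\mu$ is fixed. The main technical point is precisely this uniform-in-$n$ control of the error term, which is guaranteed by the hypothesis $\limsup_n\int\tau\,d\mu_n<\infty$.
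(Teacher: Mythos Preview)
Your proof is correct and follows essentially the same route as the paper: approximate $\tau$ by $\tau_k=k^{-1}\lceil k\tau\rceil$, apply Proposition~\ref{theo:3} to the $\N$-valued potential $k\tau_k$, use the scaling $h(k\tau_k)=k^{-1}h(\tau_k)$ to rewrite the inequality in terms of $\tau_k$, and then pass to the limit via Lemma~\ref{lem:qk}. You give slightly more detail than the paper on why the limit in $k$ can be interchanged with the $\limsup_n$ (the uniform-in-$n$ control of the error), which is welcome; a minor cosmetic point is that your choice of notation $\sigma_k$ for a potential risks confusion with the shift map $\sigma$.
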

\begin{proof} Define $\tau_k(x)=\frac{1}{k}\lceil  k\tau(x)\rceil,$ and observe that $\lim_{k\to 0}\|\tau-\tau_k\|_0=0$. Set $f_k=k\tau_k,$ and note that $h(f_k)=\frac{1}{k}h(\tau_k)$. The function $f_k$ takes values in $\N$ and $\text{var}_2(f_k)=0$. Proposition \ref{theo:3} holds for the functions $f_k$, which is equivalent to say that the proposition holds for $\tau_k$. By Lemma \ref{lem:qk}, it follows that $\lim_{k\to\infty}h(\tau_k)=h(\tau)$. Additionally, $|\int \tau_k d\nu-\int \tau d\nu|\le \|\tau-\tau_k\|_0$, for every $\nu\in\M(\sigma)$. All of this together implies that Proposition \ref{theo:3} holds for $\tau\in\Psi_2$.
\end{proof}

In subsection \ref{recode} we constructed a CMS $(\Sigma_m,\sigma_m)$, which is a recoding of $(\Sigma,\sigma)$, and defined a canonical conjugacy $\pi_m:\Sigma_m\to\Sigma$. The map $(\pi_m)_*:\M_{\le1}(\sigma_m)\to\M_{\le1}(\sigma)$ is a bijection. For $\mu\in \M_{\le1}(\sigma)$ we defined  $\tilde{\mu}=(\pi_m)^{-1}_*{\mu}\in \M_{\le1}(\sigma_m)$. 

\begin{remark}\label{rem:preclaim} Let $(\mu_n)_n$ be a sequence in $\M(\sigma)$ such that $\limsup_{n\to\infty}\int \tau d\mu_n<\infty$, where $\tau\in \Psi$. Then, $(\mu_n)_n$ converges on cylinders to $\mu\in \M_{\le1}(\sigma)$ if and only if $(\tilde{\mu}_n)_n$ converges on cylinders to $\tilde{\mu}$ (see Remark \ref{rem:topm} and Corollary \ref{lem:ui}).
\end{remark}

In the next lemma we have added superscripts $\Sigma$ and $\Sigma_m$ to the pressure to indicate the CMS we are considering. Let $\tau_m=\tau\circ \pi_m$. 
\begin{lemma}\label{lem:mvstau} $h(\tau)=h(\tau_m)$.\end{lemma}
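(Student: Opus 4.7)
The plan is to show that, for every $t > s_\infty(-\tau)$, both the pressure and the pressure at infinity are preserved under the recoding, i.e.\
$$P^{\Sigma_m}(-t\tau_m) = P^\Sigma(-t\tau), \qquad P^{\Sigma_m}_\infty(-t\tau_m) = P^\Sigma_\infty(-t\tau).$$
From these two identities, $h(\tau) = h(\tau_m)$ will follow from Definition \ref{def:123}: by Remark \ref{rem:inutil}, only values $t \ge s_\infty(-\tau)$ are relevant in the infimum defining $h$, and for such $t$ one has $-t\tau,-t\tau_m \in \H$, so Remark \ref{rem:Hinfty} allows replacing $P_\infty^{top}$ by the measure-theoretic $P_\infty$ throughout.

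The equality of the classical pressures, and hence of the thresholds $s_\infty$, is immediate: since $\pi_m$ is a topological conjugacy, $(\pi_m)_*:\M(\sigma_m)\to\M(\sigma)$ is a bijection that preserves entropy, and $\int \tau_m\,d\tilde\mu = \int \tau\,d(\pi_m)_*\tilde\mu$ by the definition $\tau_m = \tau\circ\pi_m$; the variational principle then yields the identity.

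For the pressures at infinity, the bound $P^\Sigma_\infty(-t\tau) \le P^{\Sigma_m}_\infty(-t\tau_m)$ is clear: any sequence $(\mu_n)\subseteq \M_{-t\tau}(\sigma)$ converging on cylinders to zero lifts, by Remark \ref{rem:topm}, to a sequence $(\tilde\mu_n)$ in $\M(\sigma_m)$ with the same property, and the corresponding entropies and integrals match. The reverse inequality is the delicate step, since the converse implication in Remark \ref{rem:topm} fails in general (Example \ref{counter}). I plan to dispose of it by a dichotomy on $(\int \tau_m\,d\tilde\mu_n)$. If some subsequence tends to $\infty$, fix $s\in(s_\infty(-\tau),t)$ and use the variational principle on $\Sigma_m$ to obtain
$$h_{\tilde\mu_n}(\sigma_m) - t\int \tau_m\,d\tilde\mu_n \;\le\; P^{\Sigma_m}(-s\tau_m) - (t-s)\int\tau_m\,d\tilde\mu_n \;\longrightarrow\; -\infty,$$
so this subsequence cannot contribute to the supremum. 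Otherwise $\int \tau\,d\mu_n = \int\tau_m\,d\tilde\mu_n$ is bounded; since $\pi_m$ sends length-one cylinders in $\Sigma_m$ to length-$m$ cylinders in $\Sigma$, one has $\mu_n(C)\to 0$ for every cylinder $C\subseteq\Sigma$ of length $m$, and Corollary \ref{lem:ui} (applied with $\mu=0$) then forces $(\mu_n)$ itself to converge on cylinders to zero in $\Sigma$, at which point the equality of entropies and integrals finishes the bound.

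The main obstacle is precisely the failure of the converse in Remark \ref{rem:topm}, resolved by the dichotomy above; the assumption $t > s_\infty(-\tau)$ is exactly what enables the auxiliary parameter $s$ to be chosen, ensuring that sequences with unbounded $\tau_m$-integral cannot interfere with the supremum defining $P^{\Sigma_m}_\infty(-t\tau_m)$.
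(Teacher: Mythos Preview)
Your proposal is correct and follows essentially the same route as the paper: both arguments reduce to showing $P^{\Sigma_m}(-t\tau_m)=P^{\Sigma}(-t\tau)$ and $P^{\Sigma_m}_\infty(-t\tau_m)=P^{\Sigma}_\infty(-t\tau)$ for $t>s_\infty(-\tau)$, the nontrivial direction being handled by bounding $\int\tau_m\,d\tilde\mu_n$ and then invoking Corollary~\ref{lem:ui} (equivalently Remark~\ref{rem:preclaim}). The only cosmetic difference is that where the paper cites Lemma~\ref{prop:gapr} to conclude that a realizing sequence for $P^{\Sigma_m}_\infty(-t\tau_m)$ must have bounded $\int\tau_m\,d\tilde\mu_n$, you inline that argument via the inequality $h_{\tilde\mu_n}(\sigma_m)-t\int\tau_m\,d\tilde\mu_n\le P^{\Sigma_m}(-s\tau_m)-(t-s)\int\tau_m\,d\tilde\mu_n$ with $s\in(s_\infty(-\tau),t)$---this is precisely the content of Lemma~\ref{lem:-inf}, on which Lemma~\ref{prop:gapr} rests.
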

\begin{proof} Observe that  $h_\mu(\sigma)=h_{\tilde{\mu}}(\sigma_m)$, and $\int \tau d\mu=\int \tau_m d\tilde{\mu}$, for every $\mu\in \M(\sigma)$. It follows that $P^{\Sigma_m}(-t\tau_m)=P^{\Sigma}(-t\tau)$, and therefore $s_\infty(-\tau_m)=s_\infty(-\tau)$. 

We will prove that if $t>s_\infty(-\tau)$, then  $P_\infty^{\Sigma_m}(-t\tau_m)=P^{\Sigma}_\infty(-t\tau)$. It follows from this fact that $\tau\in \Psi_1$ if and only if $\tau_m\in \Psi_1$, and that ${s}_\tau={s}_{\tau_m}$, which finishes the proof of the lemma, as we can conclude that $h(\tau)=h(\tau_m)$.

If there exists a sequence $(\mu_n)_n$ in $\M(\sigma)$ that converges on cylinders to the zero measure, and such that $\lim_{n\to\infty}\big(h_{\mu_n}(\sigma)-t\int \tau d\mu_n\big)=P_\infty(-t\tau)$, then $(\tilde{\mu}_n)_n$ converges to the zero measure (see Remark \ref{rem:topm}), and $\lim_{n\to\infty}(h_{\tilde{\mu}_n}(\sigma_m)-t\int \tau_m d\tilde{\mu}_n)=P_\infty(-t\tau)$. Therefore, we have $P_\infty^{\Sigma_m}(-t\tau_m)\ge P^{\Sigma}_\infty(-t\tau)$. If no such sequence exists, then we still have the inequality $P_\infty^{\Sigma_m}(-t\tau_m)\ge P^{\Sigma}_\infty(-t\tau)$. Note that if  $P_\infty^{\Sigma_m}(-t\tau_m)=-\infty$, then $P_\infty^{\Sigma}(-t\tau)=-\infty$. We will now suppose that $P_\infty^{\Sigma_m}(-t\tau_m)>-\infty$. 

Let  $(\tilde{\mu}_n)_n$ be a sequence in $\M_{-\tau_m}(\sigma_n)$  that converges on cylinders to the zero measure and such that $P_\infty^{\Sigma_m}(-t\tau_m)=\lim_{n\to\infty}(h_{\tilde{\mu}_n}(\sigma_m)-t\int \tau_m d\tilde{\mu}_n)$. Since $t>s_\infty(-\tau_m),$ we have that $\limsup_{n\to\infty}\int \tau_m d\tilde{\mu}_n<\infty$ (see  Lemma \ref{prop:gapr}); equivalently, $\limsup_{n\to\infty}\int \tau d\mu_n<\infty$. By Remark \ref{rem:preclaim}, $(\mu_n)_n$ converges to the zero measure, and therefore $P_\infty^{\Sigma_m}(-t\tau_m)\le P^{\Sigma}_\infty(-t\tau)$. We conclude that $P_\infty^{\Sigma_m}(-t\tau_m)=P^{\Sigma}_\infty(-t\tau)$, for every $t>s_\infty(-\tau)$.
\end{proof}

\begin{proposition}\label{toF_n2}   Proposition \ref{theo:3} holds when $\tau\in\Psi_m$, for $m\ge3$.
\end{proposition}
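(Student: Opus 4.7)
The plan is to reduce the case $\tau\in\Psi_m$, $m\ge 3$, to the already-established case of Proposition \ref{lem:lc2} via the $m$-block recoding $\pi_m:(\Sigma_m,\sigma_m)\to(\Sigma,\sigma)$ from subsection \ref{recode}. The key observation is that the first coordinate of a point in $\Sigma_m$ is the block $(x_1,\ldots,x_m)\in S_m$, so the pulled-back potential $\tau_m:=\tau\circ\pi_m$ depends only on the first coordinate of $\Sigma_m$; in particular $\mathrm{var}_1(\tau_m)=0$, hence $\mathrm{var}_2(\tau_m)=0$. Since $\pi_m$ is a topological conjugacy, we have $\inf\tau_m=\inf\tau>0$ and $P^{\Sigma_m}(-\tau_m)=P^{\Sigma}(-\tau)<\infty$; transitivity of $(\Sigma,\sigma)$ passes to $(\Sigma_m,\sigma_m)$. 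Thus $\tau_m$ satisfies the hypotheses required to apply Proposition \ref{lem:lc2} on $(\Sigma_m,\sigma_m)$.

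Next I would transfer the hypotheses on $(\mu_n)_n$. Let $\tilde{\mu}_n=(\pi_m)^{-1}_*\mu_n$ and $\tilde{\mu}=(\pi_m)^{-1}_*\mu$. Since the conjugacy yields $h_\nu(\sigma)=h_{\tilde{\nu}}(\sigma_m)$ and $\int\tau\,d\nu=\int\tau_m\,d\tilde{\nu}$ for every $\nu\in\M(\sigma)$, the assumption $\limsup_{n\to\infty}\int\tau\,d\mu_n<\infty$ becomes $\limsup_{n\to\infty}\int\tau_m\,d\tilde{\mu}_n<\infty$. Combined with Remark \ref{rem:preclaim}, the convergence on cylinders of $(\mu_n)_n$ to $\lambda\mu$ gives convergence on cylinders of $(\tilde{\mu}_n)_n$ to $\lambda\tilde{\mu}$ in $\Sigma_m$. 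Applying Proposition \ref{lem:lc2} to the sequence $(\tilde{\mu}_n)_n$ on $(\Sigma_m,\sigma_m)$ with potential $\tau_m$ yields
\begin{equation*}
\limsup_{n\to\infty}\Bigl(h_{\tilde{\mu}_n}(\sigma_m)-h(\tau_m)\int\tau_m\,d\tilde{\mu}_n\Bigr)\le\lambda\Bigl(h_{\tilde{\mu}}(\sigma_m)-h(\tau_m)\int\tau_m\,d\tilde{\mu}\Bigr).
\end{equation*}

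Finally I would translate this back to $(\Sigma,\sigma)$: Lemma \ref{lem:mvstau} gives $h(\tau_m)=h(\tau)$, and the conjugacy identities above rewrite the displayed inequality as precisely inequality (\ref{ineq:p}) for $\tau$. There is no serious obstacle here; the only work is the routine verification that $\tau_m$ belongs to the appropriate class on $(\Sigma_m,\sigma_m)$ and that the cylinder convergence transfers correctly, both of which are handled by Remark \ref{rem:preclaim} and the conjugacy property of $\pi_m$.
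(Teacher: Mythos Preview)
Your proof is correct and follows essentially the same approach as the paper: recode via $\pi_m$ so that $\tau_m=\tau\circ\pi_m$ has $\mathrm{var}_1(\tau_m)=0$ (hence lies in $\Psi_2$ on $\Sigma_m$), apply Proposition \ref{lem:lc2} there, and transfer back using the conjugacy identities together with Lemma \ref{lem:mvstau}. The only minor difference is that for the transfer of cylinder convergence the paper invokes Remark \ref{rem:topm} (which already gives the forward direction without the integral bound), whereas you use the slightly stronger Remark \ref{rem:preclaim}; both are valid here.
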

\begin{proof} 
Let $(\mu_n)_n$ be a sequence in $\M_{-\tau}(\sigma)$ which converges on cylinders to $\lambda\mu$, where $\mu\in \M_{-\tau}(\sigma),$ and $\lambda\in [0,1]$. By assumption, $\limsup_{n\to\infty}\int \tau d\mu_n<\infty$. Consider $(\Sigma_m,\sigma_m)$, and note that $\tau_m\in\Psi_1$, as it only depends on the first coordinate of $\Sigma_m$. We can apply Proposition \ref{lem:lc2} to the pair $(\Sigma_m,\sigma_m)$ and $\tau_m$. Since $(\tilde{\mu}_n)_n$ converges on cylinders to $\lambda\tilde{\mu}$ (see Remark \ref{rem:topm}), we have that
\begin{align*}\limsup_{n\to\infty}\big(h_{\tilde{\mu}_n}(\sigma_m)-h(\tau_m)\int \tau_m d{\tilde{\mu}_n}\big)\le \lambda\big(h_{\tilde{\mu}}(\sigma_m)-h(\tau_m)\int \tau_m d{\tilde{\mu}}\big).\end{align*}
Finally, recall that  $h_\nu(\sigma)=h_{\tilde{\nu}}(\sigma_m)$, and $\int \tau d\nu=\int \tau_m d\tilde{\nu}$, for every $\nu\in \M(\sigma)$, and that $h(\tau)=h(\tau_m)$ (see Lemma \ref{lem:mvstau}). We conclude that inequality (\ref{ineq:p}) holds for $\tau$. 
\end{proof}

\begin{theorem}\label{theo:4}  Let $(\Sigma,\sigma)$ be a transitive CMS and $\tau\in\Psi$. Let $(\mu_n)_n$ be a sequence  in $\M_{-\tau}(\sigma)$ which converges on cylinders to $\lambda\mu$, where $\mu\in \M_{-\tau}(\sigma)$, and $\lambda\in [0,1]$. Suppose that $\limsup_{n\to\infty}\int \tau d\mu_n<\infty$. Then,
$$\limsup_{n\to\infty}\big(h_{\mu_n}(\sigma)-h(\tau)\int \tau  d\mu_n\big)\le \lambda\big(h_{\mu}(\sigma)-h(\tau)\int \tau d\mu\big).$$
\end{theorem}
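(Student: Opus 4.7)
The plan is to reduce Theorem \ref{theo:4} to Proposition \ref{toF_n2} by approximating $\tau \in \Psi$ uniformly by locally constant potentials in $\Psi_m$, applying the locally constant case to each approximant, and then passing to the limit using Lemma \ref{lem:qk}.

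First, I would define $\tau_m(x) = \sup\{\tau(y) : y \in [x_1, \ldots, x_m]\}$. Since $\tau \in C_{uc}(\Sigma)$, we have $\|\tau - \tau_m\|_0 \le \mathrm{var}_m(\tau) \to 0$ as $m \to \infty$, and by construction $\tau_m$ depends only on the first $m$ coordinates, so $\mathrm{var}_m(\tau_m)=0$. I need to verify $\tau_m \in \Psi_m$ for $m$ large enough: since $\tau_m \ge \tau$ we have $\inf \tau_m \ge \inf \tau > 0$; the bound $\mathrm{var}_2(\tau_m) \le \mathrm{var}_2(\tau) + 2\|\tau-\tau_m\|_0 < \infty$ gives the second variation estimate; and $P(-\tau_m) \le P(-\tau) + \|\tau-\tau_m\|_0 < \infty$ gives the pressure bound. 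Hence $\tau_m \in \Psi_m$ for all sufficiently large $m \ge 3$.

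Next, observe that $\limsup_n \int \tau_m d\mu_n \le \limsup_n \int \tau d\mu_n + \|\tau - \tau_m\|_0 < \infty$, so Proposition \ref{toF_n2} applies to $\tau_m$ and yields
\begin{equation*}
\limsup_{n\to\infty}\Big(h_{\mu_n}(\sigma) - h(\tau_m)\!\int \tau_m d\mu_n\Big) \le \lambda\Big(h_{\mu}(\sigma) - h(\tau_m)\!\int \tau_m d\mu\Big).
\end{equation*}
To convert this into an inequality for $\tau$, I would write
\begin{equation*}
h_{\mu_n}(\sigma) - h(\tau)\!\int \tau d\mu_n = \Big(h_{\mu_n}(\sigma) - h(\tau_m)\!\int \tau_m d\mu_n\Big) + E_m(n),
\end{equation*}
where the error $E_m(n) = (h(\tau_m) - h(\tau))\!\int \tau d\mu_n + h(\tau_m)\bigl(\int \tau_m d\mu_n - \int \tau d\mu_n\bigr)$ satisfies
\begin{equation*}
|E_m(n)| \le |h(\tau_m) - h(\tau)| \cdot \sup_k\!\!\int\! \tau d\mu_k + h(\tau_m)\,\|\tau - \tau_m\|_0.
\end{equation*}
The hypothesis $\limsup_n \int \tau d\mu_n < \infty$ gives a uniform bound on $\sup_k \int \tau d\mu_k$, and Lemma \ref{lem:qk} yields $h(\tau_m) \to h(\tau)$; hence $\sup_n |E_m(n)| \to 0$ as $m \to \infty$. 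Analogously, $\lambda\bigl(h_{\mu}(\sigma) - h(\tau_m)\!\int \tau_m d\mu\bigr) \to \lambda\bigl(h_{\mu}(\sigma) - h(\tau)\!\int \tau d\mu\bigr)$ as $m \to \infty$, since $\int \tau d\mu < \infty$ is given.

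Combining these facts and letting $m \to \infty$ in the inequality from Proposition \ref{toF_n2} yields the desired bound. The main obstacle is purely notational: verifying that the natural uniform approximants $\tau_m$ remain in $\Psi_m$ and that the dependence of $h(\tau_m)$ on $m$ is continuous in the $C^0$ sense; both are taken care of by the structure of $\Psi$ and Lemma \ref{lem:qk}. No subsequence extraction is needed because the error term $E_m(n)$ is controlled uniformly in $n$.
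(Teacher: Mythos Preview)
Your proof is correct and follows essentially the same approach as the paper: define $\tau_m(x)=\sup\{\tau(y):y\in[x_1,\ldots,x_m]\}$, check $\tau_m\in\Psi_m$, apply Proposition~\ref{toF_n2}, and pass to the limit using $\|\tau-\tau_m\|_0\to 0$ together with Lemma~\ref{lem:qk}. Your error decomposition $E_m(n)$ is just a more explicit bookkeeping of the same limiting argument the paper sketches in its final line.
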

\begin{proof} Since $\tau\in \Psi$ we have that $\text{var}_2(\tau)<\infty$, and that $\lim_{n\to\infty}\text{var}_n(\tau)=0$. Fix $m\ge 2$ and define $\tau_m(x)=\sup\{\tau(y):y\in[x_1,\ldots, x_{m}]\}$, where $x=(x_1,x_2,\ldots)$. Note that $\|\tau-\tau_m\|_0\le \text{var}_m(\tau)$, and that $\text{var}_2(\tau_m)<\infty$. Moreover, $\tau_m\in\Psi_m$ and by Proposition \ref{toF_n2}  we have that 
$$\limsup_{n\to\infty}\big(h_{\mu_n}(\sigma)-h(\tau_m)\int \tau_m d\mu_n\big)\le \lambda\big(h_{\mu}(\sigma)-h(\tau_m)\int \tau_m d\mu\big).$$
Since $|\int \tau_m d\nu-\int \tau d\nu|\le \|\tau_m-\tau\|_0$, for every $\nu\in \M(\sigma)$, and $\|\tau_m-\tau\|_0\to 0$, we obtain the desired inequality (see Lemma \ref{lem:qk}).
\end{proof}

\subsection{Proof of Theorem \ref{thm:1}} This subsection is devoted to the proof of Theorem \ref{thm:1}. First, we prove that the pressure map is upper semi-continuous on the full shift over a countable alphabet for a suitable class of potentials. For a general CMS our strategy is to induce on a cylinder of length one and use the results obtained for the full shift.

\begin{lemma}\label{lem:vale1} Let $(\N^\N,\sigma)$ be the full shift. Let $\phi\in C_{uc}(\N^\N)$ be a potential such that $\emph{var}_1(\phi)$ and $P(-\phi)$ are finite. Then, \begin{align}\label{valep}\lim_{n\to\infty} \inf_{x\in [n]} \phi(x)=\infty.\end{align}
Moreover, if a sequence $(\nu_n)_n$ in $\M(\sigma)$ converges on cylinders to the zero measure, then $\lim_{n\to\infty}\int \phi d\nu_n=\infty$. 
\end{lemma}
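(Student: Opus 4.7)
The plan is to reduce the problem to the locally-constant-in-first-coordinate case, where Example \ref{ex:full} provides an explicit pressure formula. Let $a(n)=\inf_{x\in [n]}\phi(x)$, which is a real number because $\text{var}_1(\phi)<\infty$, and set $\phi^-(x)=a(x_1)$. The pointwise estimate $\phi^-\le \phi\le \phi^-+\text{var}_1(\phi)$, together with continuity of the pressure in the $C^0$-norm, gives $|P(-\phi)-P(-\phi^-)|\le \text{var}_1(\phi)$, so in particular $P(-\phi^-)<\infty$.

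For part (1), since $\phi^-$ has $\text{var}_1(\phi^-)=0$, Example \ref{ex:full} applied to $-\phi^-$ yields
\[ P(-\phi^-)=\log\sum_{n\in \N} e^{-a(n)}. \]
Finiteness of the left-hand side forces the series to converge, so $e^{-a(n)}\to 0$, which is exactly the first assertion $a(n)\to\infty$.

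For part (2), observe that $a(n)\to\infty$ implies $L:=\inf_{n\in\N}a(n)$ is a finite real number and $\phi\ge L$ on all of $\N^\N$. Given any $k>L$, I would choose $N$ so that $a(n)\ge k$ for all $n\ge N$, and then estimate
\[ \int\phi\,d\nu_m \ge L\cdot\nu_m\Big(\bigcup_{n<N}[n]\Big)+k\cdot\nu_m\Big(\bigcup_{n\ge N}[n]\Big). \]
Since $(\nu_m)$ converges on cylinders to the zero measure, the finite sum $\nu_m(\bigcup_{n<N}[n])=\sum_{n<N}\nu_m([n])$ tends to zero, and since each $\nu_m$ is a probability measure the complementary mass tends to $1$. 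Hence $\liminf_m\int\phi\,d\nu_m\ge k$, and letting $k\to\infty$ gives the conclusion. The only step that requires genuine care is the reduction from $\phi$ to its locally-constant minorant $\phi^-$ in order to apply Example \ref{ex:full}; after that, the second statement is a straightforward partition estimate combined with the escape of mass.
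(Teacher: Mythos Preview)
Your proof is correct and follows essentially the same approach as the paper's: reduce to the locally constant minorant, apply Example~\ref{ex:full} to conclude $a(n)\to\infty$, and then use a partition estimate over $\bigcup_{n<N}[n]$ and its complement. The only cosmetic difference is that you argue part~(2) directly while the paper phrases the same estimate as a proof by contradiction.
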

\begin{proof} Let $a:\N\to \R,$ be given by $a(n)=\inf_{x\in [n]}\phi(x)$. Define $\phi_1:\N^\N\to \R$, where   $\phi_1(x)=a(k),$ if $x\in [k]$. Note that $\|\phi-\phi_1\|_0\le \text{var}_1(\phi)$. In particular, $P(-\phi_1)=\log\big(\sum_{n=1}^\infty e^{-a(n)}\big)$ is finite (see Example \ref{ex:full}). We conclude that $\lim_{n\to\infty} a(n)=\infty$.

Let us now suppose that  $(\nu_n)_n$ converges on cylinders to the zero measure. We argue by contradiction and suppose that $\liminf_{n\to\infty}\int \phi d\nu_n<\infty$. In this case, the inequality
$$1-\nu_n(\bigcup_{k< n}[k])=\nu_n(\bigcup_{k\ge n}[k])\le \frac{\int \phi d\nu_n}{\inf \{\phi(x):x\in \bigcup_{k\ge n}[k]\}}=\frac{\int \phi d\nu_n}{\inf_{k\ge n} a(k)}, $$
implies that $(\nu_n)_n$ does not converge on cylinders to the zero measure.\end{proof}

\begin{remark}\label{rem:fshift} If $\phi: \N^\N \to \R$ is a potential such that both $P(-\phi)$ and $\text{var}_1(\phi)$ are finite, then it follows by Lemma \ref{lem:vale1} that $\phi$ is bounded below. Additionally, suppose $\phi \in \Psi$ and $t > s_\infty(-\phi)$. In this case, $P_\infty(-t\phi) = -\infty$ (see Lemma \ref{prop:gapr}
 and Lemma \ref{lem:vale1}). In particular, $\phi \in \Psi_2$, and therefore $h(\phi)=s_\infty(-\phi)$.
\end{remark}

\begin{proposition}\label{cor:v} Let $(\N^\N,\sigma)$ be the full shift and $\phi\in C_{uc}(\N^\N)$ a potential such that $\emph{var}_1(\phi)$ and $P(-\phi)$ are finite. Let $(\mu_n)_n$ be a sequence in $\M_{-\phi}(\sigma)$ which converges weak$^*$ to $\mu\in \M_{-\phi}(\sigma)$. Suppose that $\limsup_{n\to\infty} \int \phi d\mu_n<\infty$. Then, $$\limsup_{n\to\infty}\big( h_{\mu_n}(\sigma)-\int \phi d\mu_n\big)\le h_\mu(\sigma)-\int \phi d\mu.$$
\end{proposition}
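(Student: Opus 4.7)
The plan is to reduce to Theorem \ref{theo:4} after shifting $\phi$ into the class $\Psi$, and then to upgrade the coefficient from $h(\psi)$ to $1$ using the non-negativity provided by the shift together with Lemma \ref{tired}.

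\textbf{Setup and shift.} First I would observe that, by Lemma \ref{lem:vale1}, $\inf_{x \in [n]} \phi(x) \to \infty$; together with $\text{var}_1(\phi) < \infty$, which bounds the oscillation of $\phi$ on each cylinder $[n]$, this forces $\phi$ to be bounded below on $\N^{\N}$. Pick a constant $C > -\inf \phi$ and set $\psi = \phi + C$, so that $\inf \psi > 0$, $\text{var}_2(\psi) \leq \text{var}_1(\phi) < \infty$, $P(-\psi) = P(-\phi) - C < \infty$, and $\psi \in C_{uc}(\N^{\N})$. Hence $\psi \in \Psi$, and since we are on the full shift, Remark \ref{rem:fshift} gives $\psi \in \Psi_2$ with $h(\psi) = s_\infty(-\psi) \in [0,1]$; this is the structural fact special to the full shift that I will exploit.

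\textbf{Applying Theorem \ref{theo:4}.} Weak$^*$ convergence of $(\mu_n)$ to the probability measure $\mu$ implies convergence on cylinders with $\lambda = 1$, and the assumption $\limsup \int \phi\, d\mu_n < \infty$ translates to $\limsup \int \psi\, d\mu_n < \infty$. Theorem \ref{theo:4} applied with $\tau = \psi$ then yields
$$\limsup_{n \to \infty}\bigl(h_{\mu_n}(\sigma) - h(\psi) \textstyle\int \psi\, d\mu_n\bigr) \leq h_\mu(\sigma) - h(\psi) \textstyle\int \psi\, d\mu,$$
and since $\psi = \phi + C$ and all measures involved are probabilities, the constants $h(\psi) C$ cancel, giving
$$\limsup_{n \to \infty}\bigl(h_{\mu_n}(\sigma) - h(\psi) \textstyle\int \phi\, d\mu_n\bigr) \leq h_\mu(\sigma) - h(\psi) \textstyle\int \phi\, d\mu. \quad (\star)$$

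\textbf{Upgrading the coefficient.} To promote the coefficient of $\int \phi\, d\mu_n$ from $h(\psi)$ to $1$, the plan is to apply Lemma \ref{tired} to the non-negative uniformly continuous potential $\psi$ to obtain $\liminf \int \psi\, d\mu_n \geq \int \psi\, d\mu$, equivalently $\liminf \int \phi\, d\mu_n \geq \int \phi\, d\mu$. Decomposing
$$h_{\mu_n} - \textstyle\int \phi\, d\mu_n = \bigl(h_{\mu_n} - h(\psi) \textstyle\int \phi\, d\mu_n\bigr) - (1 - h(\psi)) \textstyle\int \phi\, d\mu_n,$$
taking $\limsup$, and using that $1 - h(\psi) \geq 0$ together with $(\star)$ and the reverse-Fatou bound from Lemma \ref{tired}, one obtains
$$\limsup\bigl(h_{\mu_n} - \textstyle\int \phi\, d\mu_n\bigr) \leq h_\mu - h(\psi) \textstyle\int \phi\, d\mu - (1 - h(\psi)) \textstyle\int \phi\, d\mu = h_\mu - \textstyle\int \phi\, d\mu,$$
which is the desired inequality. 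The main subtlety, and the reason the statement is restricted to the full shift, is precisely the bound $h(\psi) \leq 1$: on a general CMS one could have $\psi \in \Psi_1$ with $h(\psi) > 1$, in which case this coefficient-upgrade step would go in the wrong direction.
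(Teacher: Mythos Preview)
Your proof is correct and follows essentially the same route as the paper's: shift $\phi$ by a constant so that it lies in $\Psi$, invoke Remark~\ref{rem:fshift} to get $\psi\in\Psi_2$ and $h(\psi)=s_\infty(-\psi)\le 1$, apply Theorem~\ref{theo:4} with $\lambda=1$, and then close the gap between the coefficient $h(\psi)$ and $1$ using Lemma~\ref{tired} on the non-negative $\psi$. The paper simply relabels the shifted potential as $\phi$ and writes the two inequalities separately before adding them, whereas you keep $\phi$ and $\psi$ distinct and phrase the last step as a decomposition; mathematically the arguments are identical.
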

\begin{proof} By Remark \ref{rem:fshift}, the potential $\phi$ is bounded below. After adding a positive constant, we can assume that $\phi$ is bounded away from zero, and therefore $\phi\in \Psi$. By Remark \ref{rem:fshift}, we also have that $\phi\in \Psi_2$, and therefore $h(\phi)=s_\infty(-\phi)$. It follows by Theorem \ref{theo:4} that
$$\limsup_{n\to\infty}\big( h_{\mu_n}(\sigma)-s_\infty(-\phi)\int \phi d\mu_n\big)\le h_\mu(\sigma)-s_\infty(-\phi)\int \phi d\mu.$$
Lemma \ref{tired} implies that 
$$\limsup_{n\to\infty}-(1-s_\infty(-\phi))\int \phi d\mu_n\le -(1-s_\infty(-\phi))\int \phi d\mu.$$
The result follows from adding these inequalities together.
\end{proof}
 
\begin{remark} Under the assumptions in Proposition \ref{cor:v}, assume further that $s_\infty(-\phi)<1$. If  $\lim_{n\to\infty}\big(h_{\mu_n}(\sigma)-\int \phi d\mu_n\big)=h_\mu(\sigma)-\int \phi d\mu$,  then $\lim_{n\to\infty}\int \phi d\mu_n=\int \phi d\mu$, and $\lim_{n\to\infty}h_{\mu_n}(\sigma)=h_\mu(\sigma)$.
\end{remark}

The next proposition is the key ingredient in the proof of Theorem \ref{thm:1}.

\begin{proposition}\label{prop:5}  Let $(\Sigma,\sigma)$ be a mixing CMS and $\phi$ a weakly H\"older potential such that $P(-\phi)$ and $\emph{var}_2(\phi)$ are finite.  Let $(\mu_n)_n$ be a sequence  in $\M_{-\phi}(\sigma)$ which converges on cylinders to $\lambda\mu$, where $\lambda\in [0,1]$ and $\mu\in \M_{-\phi}(\sigma)$. Suppose that $\limsup_{n\to\infty}\int \phi  d\mu_n<\infty$. Then,
$$\limsup_{n\to\infty}\big(h_{\mu_n}(\sigma)-\int \phi d\mu_n\big)\le \lambda\big(h_\mu(\sigma)-\int \phi d\mu\big)+(1-\lambda)P(-\phi).$$
%If $P_\infty(-\phi)=-\infty$, then $\lim_{n\to\infty}\big(h_{\mu_n}(\sigma)-\int \phi d\mu_n\big)=-\infty$ whenever $\lambda\ne 1$. 
\end{proposition}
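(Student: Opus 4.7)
The plan is to reduce Proposition \ref{prop:5} to the full-shift upper semi-continuity result Proposition \ref{cor:v} via Sarig's inducing scheme. First, since the target inequality is invariant under adding a constant to $\phi$ (both sides shift by the same amount), I may assume $\inf\phi>0$, so $\phi\in\Psi$. Second, the case $\lambda=0$ is immediate from the variational principle $h_{\mu_n}-\int\phi d\mu_n\le P(-\phi)$, so I restrict to $\lambda>0$ and pick $a\in S$ with $\mu([a])>0$, which exists because $\mu$ is a probability measure. Then $c_n:=\mu_n([a])\to c:=\lambda\mu([a])>0$ by cylinder convergence, so the induced measures below are well defined for all $n$ sufficiently large.

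Using that $(\Sigma,\sigma)$ is mixing, the first-return map $(\Sigma(a),\sigma_a)$ is topologically conjugate to the full shift $(\N^\N,\sigma)$, with return time $\tau_a$ and induced potential $\bar\phi(x)=\sum_{k=0}^{\tau_a(x)-1}\phi(\sigma^k x)$. Setting $\mu_n^a=\mu_n|_{[a]}/c_n$ and $\mu^a=\mu|_{[a]}/\mu([a])$, cylinder convergence in $\Sigma$ together with $c_n\to c>0$ transfers to cylinder convergence $\mu_n^a\to\mu^a$ in $\N^\N$, which upgrades to weak$^*$ convergence by Remark \ref{rem:compatibility}. Because $P^{\Sigma_a}(-\bar\phi)$ may well be infinite, I would feed into Proposition \ref{cor:v} not $\bar\phi$ itself but the shifted potential $\psi_\epsilon:=\bar\phi+(P(-\phi)+\epsilon)\tau_a$ for $\epsilon>0$. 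Sarig's Discriminant Theorem (Theorem \ref{disc}) guarantees $P^{\Sigma_a}(-\psi_\epsilon)<\infty$; the weak H\"older property of $\phi$ combined with $\text{var}_1(\tau_a)=0$ yields $\psi_\epsilon\in C_{uc}(\N^\N)$ with $\text{var}_1(\psi_\epsilon)<\infty$; and $\limsup\int\psi_\epsilon d\mu_n^a<\infty$ follows from the hypothesis $\limsup\int\phi d\mu_n<\infty$ combined with Kac's formula $\int\tau_a d\mu_n^a=1/c_n\to 1/c$. Proposition \ref{cor:v} then yields
$$\limsup_{n\to\infty}\bigl(h_{\mu_n^a}-\int\psi_\epsilon\, d\mu_n^a\bigr)\le h_{\mu^a}-\int\psi_\epsilon\, d\mu^a.$$
Expanding $\psi_\epsilon$, multiplying through by $c_n\to c=\lambda\mu([a])$, and invoking Abramov's identity $c_n(h_{\mu_n^a}-\int\bar\phi d\mu_n^a)=h_{\mu_n^v}-\int\phi d\mu_n^v$ (where $\mu_n^v$ denotes the portion of $\mu_n$ arising from ergodic components visiting $[a]$) rearranges the bound into $\limsup(h_{\mu_n^v}-\int\phi d\mu_n^v)\le\lambda(h_\mu-\int\phi d\mu)+(1-\lambda)(P(-\phi)+\epsilon)$, and letting $\epsilon\to 0$ handles the visible part.

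The principal obstacle is controlling the non-visible portion $\mu_n^{nv}$ of $\mu_n$ coming from ergodic components supported on the invariant sub-CMS $T_a\subseteq\Sigma$ of orbits never entering $[a]$, since Abramov's identity above applies only to $\mu_n^v$. I would address this by enlarging $\{a\}$ to a finite set $F\subseteq S$ with $\mu([F])$ arbitrarily close to $1$, repeating the inducing argument on $[F]$ (whose first-return system is again a full shift on a countable alphabet), and bounding the non-visible contribution crudely by $(1-c_n)P^{T_F}(-\phi)$, where $T_F\subseteq\Sigma$ is the sub-CMS avoiding $F$. For mixing CMS with $P(-\phi)<\infty$ the tail pressures $P^{T_F}(-\phi)$ tend to $P_\infty^{top}(-\phi)\le P(-\phi)$ as $F$ exhausts $S$ (an estimate in the spirit of Proposition \ref{inf}); choosing $F$ adaptively and combining the visible and non-visible bounds then produces the claimed inequality for arbitrary $\mu_n$.
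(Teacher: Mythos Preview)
Your core strategy—induce on a $1$-cylinder $[a]$ and feed the induced measures into Proposition~\ref{cor:v} on the resulting full shift—is exactly the paper's approach. The main difference is in how you guarantee that the induced pressure is finite. The paper normalises $P(-\phi)=0$ and then splits according to whether $-\phi$ is recurrent or transient: in the recurrent case $P(-\overline{\phi})=0$ directly (Remark~\ref{rem:induce0}); in the transient case the paper replaces $-\phi$ by $-\phi - t_0 1_{[a]}$ with $t_0=\Delta_a(-\phi)$, which lands on the recurrent boundary, and the extra term $t_0\,\mu_n([a])$ cancels \emph{exactly} against $\lambda t_0\,\mu([a])$ because of cylinder convergence. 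Your $\psi_\epsilon=\overline{\phi}+(P(-\phi)+\epsilon)\tau_a$ trick is a legitimate alternative (and your appeal to $P^*(-\phi,a)\le P(-\phi)$ is correct), but the $1_{[a]}$-perturbation is slightly cleaner since it avoids the final $\epsilon\to 0$.

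The paper does \emph{not} carry out the ``non-visible'' analysis you propose: it simply applies Kac's formula $\int\tau_a\,d\overline{\nu}=1/\nu([a])$ and Abramov's formula as if every ergodic component of $\mu_n$ and of $\mu$ entered $[a]$, and never mentions $T_a$, finite exhaustions $F$, or tail pressures. Your concern is real, and your exhaustion idea can be made to work, but two points deserve sharpening. First, the non-visible contribution of $\mu_n$ is harmless: its free energy is bounded above by $P(-\phi)$, which is precisely the coefficient of $(1-\lambda)$ in the target, so the tail-pressure claim $P^{T_F}(-\phi)\to P_\infty^{top}(-\phi)$ is not needed. Second, the step ``rearranges the bound into $\limsup(h_{\mu_n^v}-\int\phi\,d\mu_n^v)\le\lambda(h_\mu-\int\phi\,d\mu)+\cdots$'' is where the real difficulty hides: what Abramov actually yields on the right is $\lambda(h_{\mu^v}-\int\phi\,d\mu^v)$, the \emph{visible} part of $\mu$, not the full $\mu$. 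It is this discrepancy—coming from $\mu^{nv}$, not $\mu_n^{nv}$—that your exhaustion $F\uparrow S$ (chosen so that $\mu([F])\to 1$) is really designed to kill.
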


\begin{proof} \iffalse It is enough to prove the result when in the inequalities  above we replace $P_\infty(-\phi)$  by $P(-\phi)$. Under such simplification  we can use the result for the potential $-\phi-tV$, where $V=\sum_{i\in \N}\frac{1}{i}1_{[i]}$. Observe that $\lim_{n\to\infty} \int Vd\mu_n=\lambda\int Vd\mu$. The inequalities in part $(a)$ and $(b)$ get replaced by 
$$\limsup_{n\to\infty}\big(h_{\mu_n}(\sigma)-\int \phi d\mu_n\big)\le \lambda\big(h_\mu(\sigma)-\int \phi d\mu\big)+(1-\lambda)P(-\phi-tV),$$
for every $t\in \R$. Finally use Proposition \ref{inf} to conclude the result for the pressure at infinity. Our goal now is to prove that parts (\ref{yapoo}) and (\ref{yapoo2}) hold when we replace $P_\infty(-\phi)$ by $P(-\phi)$. \fi

The case $\lambda=0$ follows from the variational principle. Suppose that  $\lambda>0$. We will further suppose that  $P(-\phi)=0$ (otherwise, we can substract $P(-\phi)$ from both sides and consider the potential $\phi'=\phi+P(-\phi)$).

 Fix  $a\in S$ such that $\mu([a])>0$. We will induce on the 1-cylinder $[a]$. As described in subsection \ref{sec:spr}, the induced system $(\Sigma(a),\sigma_a)$ is topologically conjugate to the full shift, and cylinders in $\Sigma(a)$ are certain cylinders in $\Sigma$. Let $\tau:\Sigma(a)\to\N$ be the first return time map, and  $\overline{\phi}:\Sigma(a)\to\R$ the induced potential, which is defined by $\overline{\phi}(x)=\sum_{i=0}^{\tau(x)-1}\phi(\sigma^i x)$. Let $\M^{[a]}(\sigma)=\{\mu\in \M(\sigma): \mu([a])>0\}$. For $\nu\in\M^{[a]}(\sigma)$, define $\overline{\nu}(\cdot)=\frac{1}{\nu([a])}\nu(\cdot\cap [a]),$ which is a measure supported on $\Sigma(a)$. By Kac's formula we have that $\int \tau d\overline{\nu}=\frac{1}{\nu([a])}$, and $\int \phi d\nu=\frac{1}{\int \tau d\overline{\nu}}\int \overline{\phi} d\overline{\nu}$, for every $\nu\in \M^{[a]}(\sigma)$. Moreover, the  map $\nu\mapsto \overline{\nu}$ is a bijection between $\M^{[a]}(\sigma)$ and $\M_\tau(\sigma_a)$.

 Since $\lim_{n\to\infty}\mu_n([a])=\lambda\mu([a])$, we can assume that $\mu_n([a])>0$, for all $n\in\N$. Then $\mu, (\mu_n)_n$ are in $\M^{[a]}(\sigma)$. Since $(\mu_n)_n$ converges on cylinders to $\lambda\mu$, we have that \begin{align}\label{va} \lim_{n\to\infty}\int \tau d\overline{\mu}_n=\lim_{n\to\infty}\frac{1}{\mu_n([a])}=\frac{1}{\lambda\mu([a])}=\frac{1}{\lambda}\int \tau d\overline{\mu}.\end{align}
%For simplicity, we will still denote by $\tau$ and $\overline{\phi}$ the corresponding potentials on the full shift $\N^\N$.  A measure $\nu\in \M(\Sigma,\sigma)$ such that $\nu([a])>0$ induces a measure $\overline{\nu}\in \M(\N^\N,\sigma_1)$; this is given by $\overline{\nu}(E)=\nu(j(E))/\nu([a])$, where $j:\N^\N\to\Sigma(a)$ is a conjugacy (see Section \ref{sec:spr}). 
Moreover, since cylinders in $\Sigma(a)$ are cylinders in $\Sigma$, it follows that $$\lim_{n\to\infty}\overline{\mu}_n(C)=\lim_{n\to\infty}\frac{1}{\mu_n([a])}\mu_n(C)=\frac{1}{\mu([a])}\mu(C)=\overline{\mu}(C),$$
for every cylinder $C\subseteq \Sigma(a)$. In particular, $(\overline{\mu}_n)_n$ converges weak$^*$ to $\overline{\mu}$ (see Remark \ref{rem:compatibility}). Set $M:=\sup_n \int \phi d\mu_n$. It follows from Kac's formula that $\int \overline{\phi}d\overline{\mu}_n \le M  \int \tau d\overline{\mu}_n$, and therefore $\limsup_{n\to\infty} \int \overline{\phi}d\overline{\mu}_n<\infty$.  Note that $\text{var}_1(\overline{\phi})\le \text{var}_2(\phi)<\infty$. 

We will consider two cases:

{\bf Case 1} ($-\phi$ is recurrent): In this case we have that $P(-\overline{\phi})=0$ (see Remark \ref{rem:induce0}). It follows by Proposition \ref{cor:v} that 
$$\limsup_{n\to\infty}\big( h_{\overline{\mu}_n}(\sigma_a)-\int \overline{\phi} d\overline{\mu}_n\big)\le h_{\overline{\mu}}(\sigma_a)-\int \overline{\phi} d\overline{\mu}.$$
Combining this with (\ref{va}), we obtain that 
\begin{align*} \limsup_{n\to\infty}\big( h_{\mu_n}(\sigma)-\int \phi d\mu_n\big)=\limsup_{n\to\infty}\bigg(\frac{h_{\overline{\mu}_n}(\sigma_a)-\int \overline{\phi} d\overline{\mu}_n}{\int \tau d\overline{\mu}_n}\bigg)&\le \lambda\bigg(\frac{h_{\overline{\mu}}(\sigma_a)-\int \overline{\phi} d\overline{\mu}}{\int \tau d\overline{\mu}}\bigg)\\ 
 &=\lambda\big(h_\mu(\sigma)-\int \phi d\mu\big).\end{align*}

{\bf Case 2} ($-\phi$ is transient): Set $t_0 := \Delta_a(-\phi) < 0$, where $\Delta_a(-\phi)$ is the $a$-discriminant of $-\phi$ (see equation (\ref{discriminant})). By the discriminant theorem (see Theorem \ref{disc}) $-\phi + t1_{[a]}$ is transient for every $t \in [0, -t_0)$. It follows that $P(-\phi + t1_{[a]}) = 0,$ for every $t \in [0, -t_0]$ (see \cite[Lemma 4.1]{cs}). Since $\Delta_a(-\phi - t_0 1_{[a]}) = 0$, the discriminant theorem implies that $-\phi_0 := -\phi - t_0 1_{[a]}$ is recurrent. Note that $P(-\phi_0)=0$, and that  $-\phi_0$ satisfies the assumption in Case 1, so we obtain that 
$$\limsup_{n\to\infty}\big(h_{\mu_n}(\sigma)-\int \phi d\mu_n-t_0\mu_n([a]) \big)\le \lambda\big(h_\mu(\sigma)-\int \phi d\mu-t_0\mu([a])\big).$$
Since $\lim_{n\to\infty}\mu_n([a])=\lambda\mu([a])$, we conclude that 
\begin{align}\label{ine:xxx}\limsup_{n\to\infty}\big(h_{\mu_n}(\sigma)-\int \phi d\mu_n \big)\le \lambda\big(h_\mu(\sigma)-\int \phi d\mu\big).\end{align}
In particular, inequality (\ref{ine:xxx}) holds in both cases.
\end{proof}

\begin{proposition}\label{theo:valecul}  Let $(\Sigma,\sigma)$ be a transitive CMS and $\phi\in C_{uc}(\Sigma)$ a potential such that $P(-\phi)$ and $\emph{var}_2(\phi)$ are finite.  Let $(\mu_n)_n$ be a sequence  in $\M_{-\phi}(\sigma)$ which converges on cylinders to $\lambda\mu$, where $\lambda\in [0,1]$ and $\mu\in\M_{-\phi}(\sigma)$. Suppose that $\limsup_{n\to\infty}\int \phi  d\mu_n<\infty$. Then,
$$\limsup_{n\to\infty}\big(h_{\mu_n}(\sigma)-\int \phi d\mu_n\big)\le \lambda\big(h_\mu(\sigma)-\int \phi d\mu\big)+(1-\lambda)P_\infty(-\phi).$$
\end{proposition}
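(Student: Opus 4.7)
The plan is to deduce the proposition from Proposition \ref{prop:5} in two stages, following the hint sketched in the commented passage preceding the proof of Proposition \ref{prop:5}: first I establish the inequality with $P(-\phi)$ in place of $P_\infty(-\phi)$ in the full generality of the statement, then sharpen the bound to $P_\infty(-\phi)$ by a perturbation with the test function $V = \sum_{k \ge 1} \tfrac{1}{k} 1_{[k]} \in C_0(\Sigma)$ together with Proposition \ref{inf}.

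For the first stage, two independent extensions of Proposition \ref{prop:5} are needed. To go from weakly H\"older to $C_{uc}$ with finite second variation (still in the mixing case), I would approximate $\phi$ by the locally constant (hence weakly H\"older) potentials $\phi_m(x) := \sup\{\phi(y) : y \in [x_1, \ldots, x_m]\}$, which satisfy $\|\phi - \phi_m\|_0 \le \text{var}_m(\phi) \to 0$, $\text{var}_2(\phi_m) \le \text{var}_2(\phi) + 2\,\text{var}_m(\phi) < \infty$ and $P(-\phi_m) \le P(-\phi) < \infty$; applying Proposition \ref{prop:5} to each $\phi_m$ and sending $m \to \infty$ yields the $C_{uc}$ mixing version. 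To pass from mixing to transitive, I would use the spectral decomposition $\Sigma = \Sigma_1 \sqcup \cdots \sqcup \Sigma_p$ from Remark \ref{mixtotra}, where $(\Sigma_1, \sigma^p)$ is mixing and the induced potential $\bar\phi = S_p \phi$ remains in $C_{uc}(\Sigma_1)$ with finite second variation and $P^{\Sigma_1}(-\bar\phi) = p P^{\Sigma}(-\phi) < \infty$. Every $\nu \in \M(\sigma)$ satisfies $\nu(\Sigma_i) = 1/p$, so the normalized restriction $\bar\nu := p\nu|_{\Sigma_1}$ is a bijection between $\M(\sigma)$ and $\M(\Sigma_1, \sigma^p)$ obeying the Abramov--Kac identities $h_{\bar\nu}(\sigma^p) = p\, h_\nu(\sigma)$ and $\int \bar\phi\, d\bar\nu = p \int \phi\, d\nu$. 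A direct computation on cylinders $[\tilde a^1, \ldots, \tilde a^k] \subseteq \Sigma_1$ (each corresponding to a cylinder of length $pk$ in $\Sigma$) shows that if $(\mu_n)_n$ converges on cylinders to $\lambda \mu$ in $\M_{\le 1}(\sigma)$, then $(\bar\mu_n)_n$ converges on cylinders to $\lambda \bar\mu$ in $\M_{\le 1}(\Sigma_1)$; applying the $C_{uc}$ mixing version on $(\Sigma_1, \sigma^p)$ and dividing by $p$ delivers the intermediate inequality with $P(-\phi)$ on the right-hand side.

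For the second stage, I apply the intermediate inequality to the potential $\phi + tV$ for each $t \ge 0$; since $V$ is bounded with $\text{var}_1(V) = 0$ and $-\phi - tV \le -\phi$, all hypotheses are preserved. Because $V \in C_0(\Sigma)$, convergence on cylinders yields $\lim_n \int V\, d\mu_n = \lambda \int V\, d\mu$, so the $t$-terms cancel on both sides and we are left with
\[\limsup_n \bigl(h_{\mu_n}(\sigma) - \int \phi\, d\mu_n\bigr) \le \lambda\bigl(h_\mu(\sigma) - \int \phi\, d\mu\bigr) + (1-\lambda)\, P(-\phi - tV).\]
Taking $t \to \infty$ and invoking Proposition \ref{inf}, combined with a sandwich by the locally constant approximants $\phi_m^-(x) := \inf\{\phi(y) : y \in [x_1, \ldots, x_m]\} \le \phi$ (which have summable variations, so $P(-\phi - tV) \le P(-\phi_m^- - tV)$ and hence $\lim_t P(-\phi - tV) \le P_\infty(-\phi_m^-) \to P_\infty(-\phi)$ as $m \to \infty$), completes the proof.

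I expect the main obstacle to lie in the transitive-to-mixing reduction, specifically in confirming that the cylinder topology transports correctly under the normalized restriction $\nu \mapsto \bar\nu$: since $\Sigma_1$ is a countable disjoint union of $1$-cylinders, $\nu(\Sigma_1)$ is in principle an infinite sum, but the identity $\nu(\Sigma_1) = 1/p$ for every $\sigma$-invariant probability supplies the uniform normalization needed to make the cylinder-by-cylinder limit go through even when mass escapes to infinity. A secondary but more technical hurdle is the summable-variation requirement of Proposition \ref{inf}, which necessitates the approximation sandwich in the last step.
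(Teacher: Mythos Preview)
Your proposal is correct and follows essentially the same approach as the paper: locally constant approximation $\phi_m$, the $tV$ perturbation together with Proposition \ref{inf} to pass from $P$ to $P_\infty$, and the spectral decomposition of Remark \ref{mixtotra} for the transitive case. The only difference is the order of operations: the paper applies Proposition \ref{prop:5} directly to $\phi_m + tV$ (so that Proposition \ref{inf} is invoked only at the locally constant level, and the limit $m\to\infty$ is taken last), which makes the separate $\phi_m^-$ sandwich you introduce in Stage~2 unnecessary.
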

\begin{proof} We will assume that $(\Sigma,\sigma)$ is mixing and explain how to derive the transitive case from the mixing case in the remark below. Fix $m\ge 2$ and define $\phi_m(x)=\sup\{\phi(y):y\in[x_1,\ldots, x_{m}]\}$, where $x=(x_1,x_2,\ldots)$. Note that $\|\phi-\phi_m\|_0\le \text{var}_m(\phi)$, and that $\text{var}_2(\phi_m)\le \text{var}_2(\phi)$. Moreover, the potential $\phi_m$ is locally constant (in particular, weakly H\"older) and $\limsup_{n\to\infty}\int \phi_m d\mu_n<\infty$.  Let $V=\sum_{k\in\N}\frac{1}{k}1_{[k]}$. Note that $\phi_m+tV$ satisfies the hypothesis in Proposition \ref{prop:5}, and therefore 
$$\limsup_{n\to\infty}\big(h_{\mu_n}(\sigma)-\int (\phi_m+tV) d\mu_n\big)\le \lambda\big(h_\mu(\sigma)-\int (\phi_m+tV) d\mu\big)+(1-\lambda)P(-\phi_m-tV).$$
Since $\lim_{n\to\infty}\int Vd\mu_n=\lambda \int Vd\mu$, we obtain that
$$\limsup_{n\to\infty}\big(h_{\mu_n}(\sigma)-\int \phi_m d\mu_n\big)\le \lambda\big(h_\mu(\sigma)-\int \phi_m d\mu\big)+(1-\lambda)P(-\phi_m-tV).$$
It follows from Proposition \ref{inf} that 
$$\limsup_{n\to\infty}\big(h_{\mu_n}(\sigma)-\int \phi_m d\mu_n\big)\le \lambda\big(h_\mu(\sigma)-\int \phi_m d\mu\big)+(1-\lambda)P_\infty(-\phi_m).$$
Finally, use that $\|\phi-\phi_m\|_0\to0$ to conclude the desired inequality.
\end{proof}

\begin{remark}\label{rem:valeric} Let $(\Sigma,\sigma)$ be a transitive CMS. In Remark \ref{mixtotra} we described the spectral decomposition of a transitive CMS; in this paragraph we will follow the notation introduced in Remark \ref{mixtotra}.  Recall that $\Sigma=\bigcup_{i=1}^p \Sigma_i$, where $p$ is the period of $\Sigma$, $\sigma(\Sigma_i)=\Sigma_{i+1},$ $\Sigma_{p+1}=\Sigma_1$, and $(\Sigma_i,\sigma^p)$ is mixing. We induce on $\Sigma_1\subseteq \Sigma$, where the first return time map is constant with values equal to $p$. It worth noticing that $(\Sigma,\sigma)$ can be identified with the CMS constructed from the pair $(\Sigma_1,\sigma^p)$ and $\tau:\Sigma_1\to\N$, where $\tau$  is constant equal to $p$. Note that $h_{\bar{\mu}}(\sigma^p)+\int \overline{\phi}d\bar{\mu}=p(h_\mu(\sigma)+\int \phi d\mu)$, where $\bar{\mu}$ is the measure in $\M(\Sigma_1)$ corresponding to $\mu\in \M(\sigma)$, and that a sequence $(\mu_n)_n$ in $\M(\sigma)$ converges on cylinders to $\lambda\mu\in\M_{\le 1}(\sigma)$ if and only if $(\bar{\mu}_n)_n$ converges on cylinders to $\lambda\bar{\mu}\in\M_{\le1}(\sigma^p)$. It follows that  $P_\infty(\overline{\phi})=pP_\infty(\phi)$. Since $(\Sigma_1,\sigma^p)$ is mixing, we can apply Proposition \ref{theo:valecul} to the sequence $(\bar{\mu}_n)_n$ and deduce the desired conclusion for the sequence $(\mu_n)_n$.
\end{remark} 

\begin{lemma}\label{lem:valeo}  Let $(\Sigma,\sigma)$ be a transitive CMS and $\phi\in C_{uc}(\Sigma)$ a potential such that $P(-\phi)$ and $\emph{var}_2(\phi)$ are finite.  Let $(\mu_n)_n$ be a sequence  in $\M_{-\phi}(\sigma)$ which converges on cylinders to $\lambda\mu$, where $\lambda\in [0,1]$ and $\mu\in\M_{-\phi}(\sigma)$. Suppose that $s_\infty(-\phi)<1$. Then,
$$\limsup_{n\to\infty}\big(h_{\mu_n}(\sigma)-\int \phi d\mu_n\big)\le \lambda\big(h_\mu(\sigma)-\int \phi d\mu\big)+(1-\lambda)P_\infty(-\phi).$$
\end{lemma}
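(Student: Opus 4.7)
The plan is to reduce to Proposition \ref{theo:valecul} by a subsequence argument, using Lemma \ref{lem:-inf} (applied to the potential $-\phi$) to rule out the case where $\int \phi\, d\mu_n$ is unbounded along a subsequence realizing the limsup.

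Concretely, set
$$L=\limsup_{n\to\infty}\bigg(h_{\mu_n}(\sigma)-\int \phi\, d\mu_n\bigg).$$
If $L=-\infty$, the desired inequality is trivial since the right-hand side is bounded below by $(1-\lambda)P_\infty(-\phi)>-\infty$ (or the inequality reads $-\infty\le$ something). Assume then $L>-\infty$ and extract a subsequence $(\mu_{n_k})_k$ with $\lim_{k\to\infty}(h_{\mu_{n_k}}(\sigma)-\int \phi\, d\mu_{n_k})=L$. Note that $(\mu_{n_k})_k$ still converges on cylinders to $\lambda\mu$.

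The key step is to show $\limsup_{k\to\infty}\int \phi\, d\mu_{n_k}<\infty$. If this failed, we could pass to a further subsequence along which $\int \phi\, d\mu_{n_k}\to \infty$, equivalently $\int (-\phi)\, d\mu_{n_k}\to -\infty$. Since $P(-\phi)<\infty$ and $s_\infty(-\phi)<1$, Lemma \ref{lem:-inf} applied to the potential $-\phi$ would then give
$$\lim_{k\to\infty}\bigg(h_{\mu_{n_k}}(\sigma)+\int (-\phi)\, d\mu_{n_k}\bigg)=\lim_{k\to\infty}\bigg(h_{\mu_{n_k}}(\sigma)-\int \phi\, d\mu_{n_k}\bigg)=-\infty,$$
contradicting the choice of the subsequence as realizing the finite limit $L$.

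Therefore the subsequence $(\mu_{n_k})_k$ satisfies all the hypotheses of Proposition \ref{theo:valecul}: it lies in $\M_{-\phi}(\sigma)$, converges on cylinders to $\lambda\mu$, and has $\limsup_k\int \phi\, d\mu_{n_k}<\infty$. Applying that proposition yields
$$L=\limsup_{k\to\infty}\bigg(h_{\mu_{n_k}}(\sigma)-\int \phi\, d\mu_{n_k}\bigg)\le \lambda\bigg(h_\mu(\sigma)-\int \phi\, d\mu\bigg)+(1-\lambda)P_\infty(-\phi),$$
which is the claimed inequality. No step is really a serious obstacle here; the only subtlety is recognizing that Lemma \ref{lem:-inf}, combined with the hypothesis $s_\infty(-\phi)<1$, is exactly the tool that lets one remove the integrability assumption from Proposition \ref{theo:valecul}.
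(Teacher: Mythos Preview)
Your proof is correct and follows essentially the same approach as the paper: pass to a subsequence realizing the $\limsup$, use Lemma~\ref{lem:-inf} (with the hypothesis $s_\infty(-\phi)<1$) to rule out $\int\phi\,d\mu_{n_k}\to\infty$, and then invoke Proposition~\ref{theo:valecul}. The paper's version phrases the case split slightly differently (it first passes to a subsequence along which $\int\phi\,d\mu_n$ converges in $\R\cup\{\infty\}$, then treats the two cases), but the logic is identical.
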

\begin{proof} Maybe after passing to a subsequence that maximizes the LHS, we can assume that $(\int \phi d\mu_n)_n$ converges to a real number, or to $\infty$. If $\lim_{n\to\infty}\int \phi d\mu_n<\infty$, the result follows from Proposition \ref{theo:valecul}. If $\lim_{n\to\infty}\int \phi d\mu_n=\infty$, then $\lim_{n\to\infty}h_{\mu_n}(\sigma)-\int \phi d\mu_n=-\infty$ (see Lemma \ref{lem:-inf}), in which case the inequality trivially holds.
\end{proof}

\begin{proof}[Proof of Theorem \ref{thm:1}] The proof of the inequality follows by  Proposition \ref{theo:valecul} and Lemma \ref{lem:valeo} for the potential $-\phi$. To see that the inequality is sharp, consider a sequence $(\nu_n)_n$ in $\M_\phi(\sigma)$ which converges to the zero measure and such that $\lim_{n\to\infty}\big(h_{\mu_n}(\sigma)+\int \phi d\mu_n\big)=P_\infty(\phi)$. Set $\eta_n=\lambda \mu+(1-\lambda)\nu_n$. Note that $(\eta_n)_n$ converges to $\lambda\mu$ and that it realizes the equality.
\end{proof}

\begin{corollary}\label{cor:finent} Let $(\Sigma,\sigma)$ be a transitive CMS with finite entropy and $\phi\in C_{b,uc}(\Sigma)$.  Let $(\mu_n)_n$ be a sequence  in $\M(\sigma)$ which converges on cylinders to $\lambda\mu$, where $\lambda\in [0,1]$ and $\mu\in\M(\sigma)$. Then, $\limsup_{n\to\infty}\big(h_{\mu_n}(\sigma)+\int \phi d\mu_n\big)\le \lambda\big(h_\mu(\sigma)+\int \phi d\mu\big)+(1-\lambda)P_\infty(\phi).$
\end{corollary}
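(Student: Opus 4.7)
The plan is to derive this directly from Theorem \ref{thm:1} by checking that all of its hypotheses are implied by the weaker assumptions of the corollary. The ingredients of Theorem \ref{thm:1} required on the potential are $\phi\in C_{uc}(\Sigma)$, $\mathrm{var}_2(\phi)<\infty$, $P(\phi)<\infty$, and either $s_\infty(\phi)<1$ or $\liminf_{n\to\infty}\int\phi\,d\mu_n>-\infty$; in addition, the measures must lie in $\M_\phi(\sigma)$. I will verify each of these using only that $(\Sigma,\sigma)$ has finite topological entropy and $\phi\in C_{b,uc}(\Sigma)$.

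First, the inclusion $C_{b,uc}(\Sigma)\subseteq C_{uc}(\Sigma)$ is immediate. Since $\phi$ is bounded, one has $\mathrm{var}_n(\phi)\le 2\|\phi\|_0<\infty$ for all $n$, so in particular $\mathrm{var}_2(\phi)<\infty$. The variational principle combined with boundedness gives
\[
P(\phi)\le h_{top}(\sigma)+\sup\phi\le h_{top}(\sigma)+\|\phi\|_0<\infty,
\]
using the hypothesis that $h_{top}(\sigma)<\infty$. Boundedness of $\phi$ also implies that $|\!\int\phi\,d\nu|\le\|\phi\|_0$ for every $\nu\in\M(\sigma)$, so both $\mu$ and each $\mu_n$ automatically lie in $\M_\phi(\sigma)$, and
\[
\liminf_{n\to\infty}\int\phi\,d\mu_n\ge -\|\phi\|_0>-\infty.
\]

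With all the hypotheses of Theorem \ref{thm:1} verified, the conclusion of that theorem applies verbatim to $\phi$ and the sequence $(\mu_n)_n$, yielding exactly the inequality in the statement. There is no real obstacle: the corollary is essentially a packaging remark, the only observation being that bounded uniformly continuous potentials on a finite-entropy transitive CMS automatically satisfy the integrability and regularity conditions used throughout Section \ref{infinite}.
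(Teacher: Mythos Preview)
Your proposal is correct and matches the paper's approach: the corollary is stated immediately after Theorem~\ref{thm:1} without proof, so the intended argument is precisely to verify that a bounded uniformly continuous potential on a finite-entropy transitive CMS satisfies the hypotheses of Theorem~\ref{thm:1}, which you do cleanly. As a minor remark, you could equally have noted that $P(t\phi)\le h_{top}(\sigma)+t\|\phi\|_0<\infty$ for all $t\ge 0$, giving $s_\infty(\phi)=0<1$; either route suffices.
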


\section{Existence of equilibrium states and maximizing measures}\label{maxme}
In this section we derive applications of Theorem \ref{thm:bsctm} and Theorem \ref{thm:1} to the problem of existence and non-existence of equilibrium states and maximizing measures. Recall that we have defined $\H=\{\phi\in C_{uc}(\Sigma): \sup(\phi), \text{var}_2(\phi),P(\phi)<\infty\}$.

\subsection{Existence and non-existence of equilibrium states} We will now prove Theorem \ref{thm:2}, which provides a criterion for the existence of equilibrium states and describes what occurs when no equilibrium state exists. 

\begin{proof}[Proof of Theorem \ref{thm:2}] 
\noindent\\\\
(\ref{122}): Observe that if $s_\infty(\phi)<1,$ and $\liminf_{n\to\infty}\int \phi d\mu_n=-\infty$, then $\liminf_{n\to\infty} \big(h_{\mu_n}(\sigma)+\int \phi d\mu_n\big)=-\infty$ (see Lemma \ref{lem:-inf}), which is not possible. We can therefore assume that $\liminf_{n\to\infty}\int \phi  d\mu_n>-\infty$. It follows by Theorem \ref{thm:bsctm} that $(\mu_n)_n$ has a subsequence that converges on cylinders to $\lambda\mu\in\M_{\le 1}(\sigma)$, where $\lambda\in [0,1]$ and $\mu\in \M_{\phi}(\sigma)$. By Theorem \ref{thm:1},
$$\limsup_{n\to\infty}\big(h_{\mu_n}(\sigma)+\int \phi d\mu_n\big)\le \lambda\big(h_\mu(\sigma)+\int \phi d\mu\big)+(1-\lambda)P_\infty(\phi).$$
Therefore, $P(\phi)= \lambda\big(h_\mu(\sigma)+\int \phi d\mu\big)+(1-\lambda)P_\infty(\phi)$. Finally, note that $P_\infty(\phi)<P(\phi)$ implies that $\lambda=1$, and therefore $\mu$ is an equilibrium state.\\\\
(\ref{222}): If there is no subsequence for which the integral goes to $-\infty$, then we have that  $\liminf_{n \to \infty} \int \phi \, d\mu_n > -\infty$.  In this case, Theorem \ref{thm:bsctm} implies that $(\mu_n)_n$ has a subsequence that converges on cylinders to a measure $\lambda\mu\in\M_{\le 1}(\sigma)$, where $\lambda\in [0,1]$ and $\mu\in \M_{\phi}(\sigma)$. By Theorem \ref{thm:1}, we have that 
$$\limsup_{n\to\infty}\big(h_{\mu_n}(\sigma)+\int \phi d\mu_n\big)\le \lambda\big(h_\mu(\sigma)+\int \phi d\mu\big)+(1-\lambda)P_\infty(\phi).$$
Therefore, $P(\phi)=\lambda\big(h_\mu(\sigma)+\int \phi d\mu\big)+(1-\lambda)P_\infty(\phi)$. Since $h_\mu(\sigma)+\int \phi d\mu<P(\phi)$, we conclude that $\lambda=0$
\end{proof}

\begin{remark} Let $\phi\in \H$ and define $f(t)=P(t\phi)$. If $t>s_\infty(\phi)$ and $t\phi$ is SPR, then $t\phi$  has an equilibrium state (note that  $s_\infty(t\phi)<1$). If $\phi$ is SPR and $s_\infty(\phi)=1$, then it admits an equilibrium state if and only if the slopes of the supporting lines for the graph of $f$ at $(t,f(t))$ remain bounded as $t\to 1^+$.
\end{remark}

%\begin{remark} 
%An equivalent formulation of Theorem \ref{qne} is the following: under the assumptions of  Theorem \ref{qne} there exists a subsequence  $(\mu_{n_k})_k$ that either converges on cylinders to the zero measure or satisfies that $\lim_{k\to\infty}h_{\mu_{n_k}}(\sigma)=\infty$.
%\end{remark}

\begin{remark} 
Let $\phi\in \H$ be a potential which does not have an equilibrium state. Let $(\mu_n)_n$ be a sequence in $\M_{\phi}(\sigma)$ such that $P(\phi)=\lim_{n\to\infty} \big(h_{\mu_n}(\sigma)+\int \phi d\mu_n\big).$ Then:
\begin{enumerate}
\item If $(\Sigma,\sigma)$ has finite topological entropy, then $(\mu_n)_n$ converges to the zero measure. 
\item If $(\Sigma,\sigma)$ is the full shift on a countable alphabet and $\text{var}_1(\phi)<\infty$, then \\ $\lim_{n\to\infty}\int \phi d\mu_n=-\infty$ (see Lemma \ref{lem:vale1}).
\item If $P_\infty(\phi)=-\infty$, then $(\mu_n)_n$ does not have any subsequence converging to the zero measure (otherwise, $P_\infty(\phi)=P(\phi)$). Therefore, 
$\lim_{n\to\infty}\int \phi d\mu_n=-\infty$.
\end{enumerate}
\end{remark}

\begin{example}\label{addex} Let $(\Sigma,\sigma)$ be a null recurrent finite entropy CMS. As described in subsection  \ref{sec:spr}, the induced transformation over a $1$-cylinder $[a]$ is conjugate to the full shift $(\N^\N,\sigma_f)$. Let $\tau:\N^\N\to\N$ be the first return time map and set $f(t)=P(-t\tau)$. Note that $\tau$ depends only on the first coordinate, that is,  $\text{var}_1(\tau)=0$.  Since $(\Sigma,\sigma)$ null recurrent, we have that $f(t)=\infty$ if $t<h_{top}(\sigma)$ and $f(h_{top}(\sigma))=0$. The potential $\phi:=-h_{top}(\sigma)\tau$ is positive recurrent (see \cite[Corollary 2]{sap}; see also Proposition \ref{lem:finalfull}). Let $\mu_\phi$ be the RPF measure of $\phi$. Since $(\Sigma,\sigma)$ has no measure of maximal entropy, we have that $\int \tau d\mu_\phi=\infty$, or $\int \phi d\mu_\phi=-\infty$; equivalently, $\phi$ does not have an equilibrium state in the classical sense. For $t>1,$ the potential $t\phi$ has a unique equilibrium state, which we denote by $\mu_t$. Then,   $$0=P(\phi)=\lim_{t\to1^+}\big( h_{\mu_t}(\sigma)+\int \phi d\mu_t\big),\text{ and }\lim_{t\to 1^+}\int \phi d\mu_t=-\infty.$$
In this example, we observe that the finiteness  condition on the integral of the sequence of measures cannot be removed from the assumptions in Theorem \ref{thm:2}(\ref{122}) in order to guarantee the existence of an equilibrium state.
\end{example}

%In this section we discuss two applications of the results obtain in previous  sections. In the non-compact case it could happen that a potential does not have any maximizing measure and certain assumption is required to rule out such behaviour. In Section \ref{sec:flow} we study the ergodic theory of suspension flows over CMSs.  We obtain a escape of mass/entropy formula for the suspension flow analogous to Theorem \ref{itv1}, this is Theorem \ref{thm:susp}. In this set up Theorem \ref{theo:4} is the main input. Our results apply to an arbitrary transitive CMS and large family of potentials (see Definition \ref{def:Q}).

\subsection{Ergodic optimization} For $\phi \in C(\Sigma)$ we define $$\beta(\phi )=\sup_{\mu\in \M(\sigma)}\int \phi d\mu.$$ We say that $\mu\in \M(\sigma)$ is a maximizing measure of $\phi $ if $\beta(\phi )=\int \phi d\mu$. As mentioned in the introduction, we consider a quantity analogous to the pressure at infinity. Define
$$\beta_\infty(\phi)=\sup_{(\mu_n)_n\to 0}\limsup_{n\to\infty} \int \phi d\mu_n,$$
where the supremum runs over sequences in $\M(\sigma)$ that converge on cylinders to the zero measure. If there is no such sequence we define $\beta_\infty(\phi)=-\infty$.  Note that  $P_\infty(\phi)\ge \beta_\infty(\phi)$.

Continuous potentials on subshifts of finite type admit maximizing measures. This is not generally the case for countable Markov shifts, and we are interested in establishing a criterion for the existence of a maximizing measure in this setting. It is fairly easy to construct potentials without maximizing measures, as we can see in the example below.

\begin{example}\label{ex:nomea} Let $(\Sigma,\sigma)$ be a CMS for which there exists a sequence $(\mu_n)_n$ in $\M(\sigma)$ which converges on cylinders to the zero measure (for instance,  if $h_{top}(\sigma)$ is finite; see also Theorem \ref{cry}). Let $\phi=1-\sum_{k=1}^\infty \frac{1}{k}1_{[k]}$. Note that $\lim_{n\to\infty}\int \phi d\mu_n=1$, and therefore $\beta(\phi)\ge 1.$ Since $\phi<1$, we conclude that $\beta(\phi)=1$, and that $\phi$ does not have any maximizing measure. Furthermore, note that $\beta_\infty(\phi)=1$. 
\end{example}

We will prove a result that describes the limiting behavior of the integral of a potential when escape of mass is allows, which is analogous to Theorem \ref{thm:1} and improves Lemma \ref{tired}. For this, we will use the fact that  $\beta_\infty$ is the asymptotic slope of the map $t\mapsto P_\infty(t\phi)$.

\begin{lemma}\label{prop:betaas}
Let $\phi\in \H$. Then $$\beta_\infty(\phi)=\lim_{t\to\infty}\frac{1}{t}P_\infty(t\phi).$$
\end{lemma}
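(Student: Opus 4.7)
The plan is to sandwich $\frac{1}{t}P_\infty(t\phi)$ between two expressions that both tend to $\beta_\infty(\phi)$ as $t\to\infty$. Concretely, I will prove $P_\infty(t\phi)\geq t\beta_\infty(\phi)$ for every $t>0$ and $P_\infty(t\phi)\leq P(\phi)+(t-1)\beta_\infty(\phi)$ for every $t>1$; dividing by $t$ and letting $t\to\infty$ will simultaneously show that the limit exists and identify its value. Throughout, I will tacitly use that since $\sup\phi<\infty$, the value of $\beta_\infty(\phi)$ is unchanged if one restricts the defining supremum to sequences in $\M_\phi(\sigma)$ (one may pass to a subsequence whenever $\limsup_n\int\phi \, d\mu_n>-\infty$), so the two suprema defining $P_\infty$ and $\beta_\infty$ run over the same class.

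The lower bound will be immediate from the nonnegativity of entropy: for any sequence $(\mu_n)_n$ in $\M_\phi(\sigma)$ converging on cylinders to the zero measure and any $t>0$, one has $h_{\mu_n}(\sigma)+t\int\phi \, d\mu_n\geq t\int\phi \, d\mu_n$; taking the limsup and then the supremum over all such sequences gives $P_\infty(t\phi)\geq t\beta_\infty(\phi)$.

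For the upper bound I will appeal to the ordinary variational principle. Since $\phi\in\H$, the pressure $P(\phi)$ is finite, and for every $\mu\in\M_\phi(\sigma)$,
$$h_\mu(\sigma)+t\int\phi \, d\mu=\bigl(h_\mu(\sigma)+\int\phi \, d\mu\bigr)+(t-1)\int\phi \, d\mu\leq P(\phi)+(t-1)\int\phi \, d\mu.$$
For $t>1$ the coefficient $t-1$ is positive, so taking the limsup along any sequence $(\mu_n)_n\to 0$ and then the supremum yields $P_\infty(t\phi)\leq P(\phi)+(t-1)\beta_\infty(\phi)$; dividing by $t$ and sending $t\to\infty$ produces the required upper bound. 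I do not expect a serious obstacle, since the proof reduces to the variational principle and the nonnegativity of entropy. The only point requiring care is the corner case $\beta_\infty(\phi)=-\infty$: either no sequence in $\M(\sigma)$ converges on cylinders to the zero measure (so $P_\infty(t\phi)=-\infty$ by definition), or every such sequence satisfies $\int\phi \, d\mu_n\to-\infty$, in which case the same per-sequence estimate forces $P_\infty(t\phi)=-\infty$; in both situations the claimed identity reads $-\infty=-\infty$.
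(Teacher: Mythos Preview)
Your argument is correct. The lower bound is identical to the paper's, and your handling of the case $\beta_\infty(\phi)=-\infty$ is fine.

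The upper bound, however, proceeds differently from the paper. The paper sets $A=\limsup_{t\to\infty}\frac{1}{t}P_\infty(t\phi)$, picks a sequence $t_n\to\infty$ realizing $A$, and then constructs a diagonal sequence $(\nu_n)_n\to 0$ with $\frac{1}{t_n}h_{\nu_n}(\sigma)+\int\phi\,d\nu_n\to A$; from $h_{\nu_n}(\sigma)+\int\phi\,d\nu_n\le P(\phi)$ it extracts a uniform bound on $h_{\nu_n}(\sigma)$, whence $\frac{1}{t_n}h_{\nu_n}(\sigma)\to 0$ and $A=\lim_n\int\phi\,d\nu_n\le\beta_\infty(\phi)$. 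Your route is more direct: the same variational inequality $h_\mu(\sigma)+\int\phi\,d\mu\le P(\phi)$ is used \emph{pointwise} to obtain the uniform estimate $P_\infty(t\phi)\le P(\phi)+(t-1)\beta_\infty(\phi)$ for all $t>1$, which after dividing by $t$ sandwiches $\frac{1}{t}P_\infty(t\phi)$ between $\beta_\infty(\phi)$ and $\frac{P(\phi)}{t}+\frac{t-1}{t}\beta_\infty(\phi)$. This avoids the diagonal construction entirely and simultaneously proves existence of the limit. Both arguments hinge on the same ingredient (the ordinary variational principle for $\phi$), but yours is shorter and yields the explicit non-asymptotic bound as a byproduct.
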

\begin{proof} First, note that $P_\infty(t\phi)\ge \beta_\infty(t\phi)=t\beta_\infty(\phi)$, and therefore $\liminf_{t\to\infty}\frac{1}{t}P_\infty(t\phi)\ge \beta_\infty(\phi)$. Set $A=\limsup_{t\to\infty}\frac{1}{t}P_\infty(t\phi)$. Observe that $P_\infty(t\phi)\le tP_\infty(\phi)$, for all $t\ge 1$, and therefore $A\le P_\infty(\phi)<\infty$. We claim that $A\le \beta_\infty(\phi)$. We will assume that $P_\infty(t\phi)>-\infty$, for all $t>0$, otherwise $A=\beta_\infty(\phi)=-\infty$. 

Let $(t_n)_n$ be a sequence going to infinity such that $\lim_{n\to\infty}\frac{1}{t_n}P_\infty(t_n\phi)=A$. Denote by $\nu_0$ the zero measure. Let $(\nu_n)_n$ be a sequence in $\M(\sigma)$ such that $\rho(\nu_n,\nu_0)\le \frac{1}{n}$ (see (\ref{metricc})), and $\big(h_{\nu_n}(\sigma)+t_n\int \phi d\nu_n\big)\in [P_\infty(t_n\phi)-\frac{1}{n},P_\infty(t_n\phi)+\frac{1}{n}]$. We conclude that 
$$\lim_{n\to\infty}\bigg(\frac{1}{t_n}h_{\nu_n}(\sigma)+\int \phi d\nu_n\bigg)=A,$$
and that $(\nu_n)_n$ converges to the zero measure. Note that 
$$\frac{t_n-1}{t_n}h_{\nu_n}(\sigma)+\bigg(\frac{1}{t_n}h_{\nu_n}(\sigma)+\int \phi d\nu_n\bigg)\le P(\phi),$$
and therefore $\limsup_{n\to\infty}h_{\nu_n}(\sigma)$ is finite. We obtain that $\lim_{n\to\infty}\frac{1}{t_n}h_{\nu_n}(\sigma)=0$. Finally, note that $A=\lim_{n\to\infty}\int \phi d\nu_n\le \beta_\infty(\phi)$, which concludes the proof of the lemma. \end{proof}

\begin{remark}\label{what} It is known that if $\phi\in \H$, then $\beta(\phi)=\lim_{t\to\infty}\frac{1}{t}P(t\phi)$ (the proof is analogous to the proof of Lemma \ref{prop:betaas}). %Observe that $\beta_\infty(\phi)=-\infty$ implies that $P_\infty(t\phi)=-\infty$, for every $t\ge 1$ (note that $P_\infty(t\phi)\le P_\infty(\phi)+(t-1)\beta_\infty(\phi)$).
\end{remark}

\begin{theorem}\label{thm:maxm}  Let $(\Sigma,\sigma)$ be a transitive CMS and $\phi\in\H$.  Let $(\mu_n)_n$ be a sequence  in $\M(\sigma)$ which converges on cylinders to $\lambda\mu$, where $\lambda\in [0,1]$ and $\mu\in\M(\sigma)$. Then,
$$\limsup_{n\to\infty} \int \phi d\mu_n \le \lambda\int \phi d\mu+(1-\lambda)\beta_\infty(\phi).$$
Moreover, the inequality is sharp.
\end{theorem}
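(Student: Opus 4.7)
The plan is to apply Theorem \ref{thm:1} to the rescaled potential $t\phi$ for $t>1$, divide by $t$, and pass to the limit $t\to\infty$, invoking Lemma \ref{prop:betaas} which identifies $\beta_\infty(\phi)$ with the asymptotic slope $\lim_{t\to\infty} t^{-1}P_\infty(t\phi)$. Before doing so I would check that $t\phi$ satisfies the hypotheses of Theorem \ref{thm:1} for every $t>1$: uniform continuity and finiteness of $\text{var}_2(t\phi)$ are immediate, while $\sup\phi<\infty$ combined with $t(\phi-\sup\phi)\le \phi-\sup\phi$ for $t\ge 1$ yields
$$P(t\phi)=P(t(\phi-\sup\phi))+t\sup\phi\le P(\phi)+(t-1)\sup\phi<\infty,$$
and the same monotonicity shows $s_\infty(t\phi)\le 1/t<1$, so the $s_\infty$-hypothesis of Theorem \ref{thm:1} will be automatic.

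In the principal case $\int\phi\,d\mu>-\infty$, after discarding the indices $n$ with $\int\phi\,d\mu_n=-\infty$ (which cannot affect the $\limsup$), Theorem \ref{thm:1} applied to $t\phi$ gives
$$\limsup_{n\to\infty}\Bigl(h_{\mu_n}(\sigma)+t\textstyle\int\phi\,d\mu_n\Bigr)\le \lambda\Bigl(h_\mu(\sigma)+t\textstyle\int\phi\,d\mu\Bigr)+(1-\lambda)P_\infty(t\phi).$$
Dropping the non-negative entropy term on the left, dividing by $t$, and using the variational principle bound $h_\mu(\sigma)\le P(\phi)-\int\phi\,d\mu<\infty$, the desired inequality will follow by sending $t\to\infty$ and applying Lemma \ref{prop:betaas}.

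The complementary case $\int\phi\,d\mu=-\infty$ must be argued directly, since Theorem \ref{thm:1} requires $\mu\in\M_\phi(\sigma)$. For $\lambda>0$, Lemma \ref{tired} applied to the non-negative uniformly continuous function $\sup\phi-\phi$ and to $(\mu_n)\to\lambda\mu$ forces
$$\liminf_{n\to\infty}\int(\sup\phi-\phi)\,d\mu_n\ge \int(\sup\phi-\phi)\,d(\lambda\mu)=+\infty,$$
so $\limsup\int\phi\,d\mu_n=-\infty$, matching the right-hand side. For $\lambda=0$ the inequality is just the definition of $\beta_\infty(\phi)$. Sharpness will be witnessed by the standard construction $\eta_n=\lambda\mu+(1-\lambda)\nu_n\in\M(\sigma)$, where $(\nu_n)$ is a sequence in $\M(\sigma)$ converging on cylinders to the zero measure with $\int\phi\,d\nu_n\to\beta_\infty(\phi)$; then $\eta_n\to\lambda\mu$ on cylinders and $\int\phi\,d\eta_n\to\lambda\int\phi\,d\mu+(1-\lambda)\beta_\infty(\phi)$.

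I expect the main obstacle to be the bookkeeping around the degenerate case $\int\phi\,d\mu=-\infty$, where the variational bound on $h_\mu(\sigma)$ collapses and Theorem \ref{thm:1} cannot be applied as a black box; the Fatou-type estimate from Lemma \ref{tired} is what covers this gap, together with the subsequence reduction that removes indices with $\int\phi\,d\mu_n=-\infty$.
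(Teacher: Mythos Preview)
Your proposal is correct and follows essentially the same route as the paper: apply Theorem \ref{thm:1} to $t\phi$ for $t\ge 1$, divide by $t$, and send $t\to\infty$ using Lemma \ref{prop:betaas}; sharpness is handled by the same convex combination $\eta_n=\lambda\mu+(1-\lambda)\nu_n$. If anything, your write-up is more careful than the paper's: you explicitly verify that $t\phi$ satisfies the hypotheses of Theorem \ref{thm:1} (in particular $s_\infty(t\phi)\le 1/t<1$), you explicitly discard the indices with $\int\phi\,d\mu_n=-\infty$, and you treat the case $\int\phi\,d\mu=-\infty$ separately via Lemma \ref{tired}, whereas the paper invokes Lemma \ref{tired} once to assert $\int\phi\,d\mu>-\infty$ after passing to a subsequence with convergent integrals (implicitly disposing of the $-\infty$ limit and the $\lambda=0$ cases as trivial).
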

\begin{proof} Maybe after passing to a subsequence we can assume that $\lim_{n\to\infty}\int \phi d \mu_n$ is well defined. By Lemma \ref{tired}, we have that $\int \phi d\mu>-\infty$. By Theorem \ref{thm:1}, applied to the potential $t\phi$, for $t\ge 1$, and obtain that
$$\limsup_{n\to\infty}\bigg(\frac{1}{t}h_{\mu_n}(\sigma)+\int \phi d\mu_n\bigg)\le \lambda\bigg(\frac{1}{t}h_\mu(\sigma)+\int \phi d\mu\bigg)+(1-\lambda)\frac{1}{t}P_\infty(t\phi).$$
Sending $t$ to infinity we obtain the desired inequality (see Lemma \ref{prop:betaas}). To see that the inequality is sharp, consider a sequence $(\nu_n)_n$ converging to the zero measure and such that $\lim_{n\to\infty}\int \phi d\nu_n=\beta(\phi)$ and define $\eta_n=\lambda\mu+(1-\lambda)\nu_n$. The sequence $(\eta_n)_n$ converges to $\lambda\mu$ and achieves equality in the inequality above.
\end{proof}

The next result is analogous to Theorem \ref{thm:2} and the proof is essentially the same, considering Theorem \ref{thm:maxm} instead of Theorem \ref{thm:1}. We leave the details to the reader.

\begin{theorem}\label{maxva} Let $(\Sigma,\sigma)$ be a transitive CMS and $\phi\in \H$.  Let $(\mu_n)_n$ be a sequence in $\M(\sigma)$ such that $\lim_{n\to\infty}\int \phi d\mu_n=\beta(\phi)$. Then:
\begin{enumerate}
\item If $\beta_\infty(\phi)<\beta(\phi)$, then $(\mu_n)_n$ has a subsequence which converges weak$^*$ to a maximizing measure of $\phi$. In particular, $\phi$ admits a maximizing measure.  
\item If $\phi$ does not have a maximizing measure, then $(\mu_n)_n$ converges on cylinders to the zero measure.
\end{enumerate}
\end{theorem}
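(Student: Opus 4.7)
The plan is to adapt the proof of Theorem \ref{thm:2} by replacing the upper semi-continuity statement for the pressure (Theorem \ref{thm:1}) with the corresponding statement for the integral (Theorem \ref{thm:maxm}), while using Theorem \ref{thm:bsctm} as the compactness input. Since $\phi \in \H$ is bounded above, $\beta(\phi) \le \sup \phi < \infty$, and because $\lim_n \int \phi \, d\mu_n = \beta(\phi)$, the hypothesis $\liminf_n \int \phi \, d\mu_n$ finite needed for Theorem \ref{thm:bsctm} holds.

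For (1), by Theorem \ref{thm:bsctm} there is a subsequence, still denoted $(\mu_n)_n$, which converges on cylinders to some $\lambda\mu \in \M_{\le 1}(\sigma)$ with $\lambda \in [0,1]$ and $\mu \in \M_\phi(\sigma)$. Theorem \ref{thm:maxm} then gives
\begin{align*}
\beta(\phi) = \lim_{n\to\infty} \int \phi \, d\mu_n \le \lambda \int \phi \, d\mu + (1-\lambda)\,\beta_\infty(\phi).
\end{align*}
Using $\int \phi \, d\mu \le \beta(\phi)$ together with the standing hypothesis $\beta_\infty(\phi) < \beta(\phi)$, equality in the displayed line forces $\lambda = 1$ and $\int \phi \, d\mu = \beta(\phi)$. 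Thus $\mu$ is a probability measure and a maximizing measure for $\phi$. Since the cylinder limit is a probability measure, Remark \ref{rem:compatibility} upgrades the cylinder convergence to weak$^*$ convergence, which completes part (1).

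For (2), suppose $\phi$ has no maximizing measure. It suffices to show that every subsequence of $(\mu_n)_n$ has a further subsequence converging on cylinders to the zero measure. Given any subsequence, Theorem \ref{thm:bsctm} extracts a further subsubsequence converging on cylinders to $\lambda\mu$ with $\lambda \in [0,1]$ and $\mu \in \M_\phi(\sigma)$, and Theorem \ref{thm:maxm} yields
\begin{align*}
\beta(\phi) \le \lambda \int \phi \, d\mu + (1-\lambda)\, \beta_\infty(\phi).
\end{align*}
Since no maximizing measure exists, $\int \phi \, d\mu < \beta(\phi)$, and by definition $\beta_\infty(\phi) \le \beta(\phi)$. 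If $\lambda > 0$, the right-hand side is strictly less than $\beta(\phi)$, a contradiction. Hence $\lambda = 0$, that is, the subsubsequence converges on cylinders to the zero measure. A standard subsequential argument in the metrizable cylinder topology then shows that the whole sequence $(\mu_n)_n$ converges on cylinders to zero.

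There is no real obstacle beyond correctly invoking the two main inputs; the only subtle point is upgrading cylinder convergence to weak$^*$ convergence in part (1), which is automatic once $\lambda = 1$ is established via Remark \ref{rem:compatibility}.
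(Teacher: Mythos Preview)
Your proof is correct and follows exactly the approach the paper indicates: adapting the proof of Theorem \ref{thm:2} with Theorem \ref{thm:maxm} in place of Theorem \ref{thm:1}, using Theorem \ref{thm:bsctm} for compactness and Remark \ref{rem:compatibility} to upgrade to weak$^*$ convergence. Your handling of part (2) via the ``every subsequence has a further subsequence'' argument is appropriate, since here the stronger conclusion that the whole sequence converges to zero is claimed (unlike in Theorem \ref{thm:2}(\ref{222}), where the integral could diverge to $-\infty$).
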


Note that Theorem \ref{thm:mm} is a direct consequence of Theorem \ref{maxva}.  We finish this subsection with a result about zero temperature limits for a suitable class of potentials.

\begin{proposition}\label{fv1}  Let $(\Sigma,\sigma)$ be a transitive CMS and $\phi\in \H$ a potential with summable variations such that $\beta_\infty(\phi)<\beta(\phi)$. Then, there exists $t_0\in [0,\infty)$ such that $t\phi$ has a unique equilibrium state $\mu_t$, for every $t\in (t_0,\infty)$. Moreover, if $(t_n)_n$ goes to $\infty$, then $(\mu_{t_n})_n$ has a subsequence which converges weak$^*$ to a maximizing measure of $\phi$. 
\end{proposition}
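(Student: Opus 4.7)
The plan is to combine the asymptotic slope identities $P(t\phi)/t\to\beta(\phi)$ (Remark \ref{what}) and $P_\infty(t\phi)/t\to\beta_\infty(\phi)$ (Lemma \ref{prop:betaas}) to force $t\phi$ to be SPR for all sufficiently large $t$, produce the unique equilibrium state $\mu_t$ via Theorem \ref{thm:2}(\ref{122}) together with the uniqueness result \cite[Theorem 1.2]{bs}, and then verify that $(\mu_{t_n})_n$ is automatically a maximizing sequence for $\phi$ as $t_n\to\infty$, so that Theorem \ref{maxva}(1) delivers the desired weak$^*$ subsequential limit.

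For the first step, the hypothesis $\beta_\infty(\phi)<\beta(\phi)$ combined with the two slope limits yields $t_0\ge 1$ such that $P_\infty(t\phi)<P(t\phi)$, i.e.\ $t\phi$ is SPR, for every $t>t_0$. Membership $t\phi\in\H$ for $t\ge 1$ is routine: $\sup(t\phi)\le t\sup\phi$, $\text{var}_2(t\phi)=t\,\text{var}_2(\phi)$, and the decomposition $h_\mu+t\int\phi\,d\mu=(h_\mu+\int\phi\,d\mu)+(t-1)\int\phi\,d\mu$ gives $P(t\phi)\le P(\phi)+(t-1)\sup\phi<\infty$. Moreover $s_\infty(t\phi)\le 1/t<1$, since $P((1/t)\cdot t\phi)=P(\phi)<\infty$. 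Applying Theorem \ref{thm:2}(\ref{122}) to any sequence attaining $P(t\phi)$ in the variational principle then yields an equilibrium state for $t\phi$; uniqueness follows from \cite[Theorem 1.2]{bs} because $t\phi$ inherits summable variations from $\phi$. Denote this unique equilibrium state by $\mu_t$.

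For the second half, let $(t_n)_n$ tend to infinity. The goal is to show $\int\phi\,d\mu_{t_n}\to\beta(\phi)$, after which Theorem \ref{maxva}(1) immediately produces a weak$^*$-convergent subsequence whose limit is a maximizing measure. I would combine the identity $h_{\mu_{t_n}}+t_n\int\phi\,d\mu_{t_n}=P(t_n\phi)$ with the variational bound $h_{\mu_{t_n}}+\int\phi\,d\mu_{t_n}\le P(\phi)$ (valid because $\mu_{t_n}\in\M_{t_n\phi}(\sigma)=\M_\phi(\sigma)$). Subtracting the second from the first and dividing by $t_n-1$ gives
\[
\int\phi\,d\mu_{t_n}\;\ge\;\frac{P(t_n\phi)-P(\phi)}{t_n-1},
\]
whose right-hand side tends to $\beta(\phi)$ by Remark \ref{what}. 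Paired with the trivial upper bound $\int\phi\,d\mu_{t_n}\le\beta(\phi)$, this proves the claim.

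The only genuinely subtle point is this subtraction trick yielding the lower bound on $\int\phi\,d\mu_{t_n}$; I prefer it to an argument based on differentiability of $t\mapsto P(t\phi)$, which would require extra work and might fail for arbitrarily chosen $t_n$. Everything else is a direct appeal to the SPR criterion, the Buzzi--Sarig uniqueness theorem, and the ergodic-optimization compactness already developed in Section \ref{maxme}.
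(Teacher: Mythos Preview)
Your proposal is correct, and the first half (existence and uniqueness of $\mu_t$ for large $t$) is essentially identical to the paper's argument. The one genuine difference is how you establish $\int\phi\,d\mu_{t_n}\to\beta(\phi)$: the paper invokes the derivative formula $P'(t\phi)=\int\phi\,d\mu_t$ from \cite{sa2} and then uses convexity of $t\mapsto P(t\phi)$ to identify $\lim_{t\to\infty}P'(t\phi)$ with the asymptotic slope $\beta(\phi)$ from Remark \ref{what}, whereas you bypass differentiability entirely via the elementary inequality
\[
(t_n-1)\int\phi\,d\mu_{t_n}\ \ge\ P(t_n\phi)-P(\phi),
\]
obtained by subtracting $h_{\mu_{t_n}}+\int\phi\,d\mu_{t_n}\le P(\phi)$ from the equilibrium identity. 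Your route is more self-contained since it does not import the differentiability result; the paper's route is shorter once that result is granted. Both feed the same conclusion into Theorem \ref{maxva}(1).
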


\begin{proof} Since $\beta_\infty(\phi)=\lim_{t\to\infty}\frac{1}{t}P_\infty(t\phi),$ and $\beta_\infty(\phi)<\beta(\phi)$, it follows that $P_\infty(t\phi)< t\beta(\phi)\le P(t\phi)$ for large enough $t$. Therefore, there exists $t_0\ge 1$ such that if $t>t_0$, then $t\phi$ is SPR. Since $s_\infty(t\phi)=\frac{1}{t}s_\infty(\phi)<1$, for all $t>t_0$, we conclude that $t\phi$ has an equilibrium state (see Theorem \ref{thm:2}). Since $t\phi$ has summable variations, the equilibrium state is unique.   Note that $P'(t\phi)=\int \phi d\mu_t$, for every $t>t_0$ (see \cite{sa2}). By convexity of the map $t\mapsto P(t\phi)$, $\lim_{t\to\infty}P'(t\phi)=\lim_{t\to\infty}\frac{1}{t}P(t\phi)=\beta(\phi)$ (see Remark \ref{what}). Then, $\lim_{t\to\infty}\int \phi d\mu_t=\beta(\phi)$, and the result follows from Theorem \ref{maxva}.
\end{proof}

\section{Pressure at infinity of suspension flows}\label{sec:flow}
 Let $(\Sigma, \sigma)$ be a transitive CMS and $\tau$ a continuous potential  bounded away from zero, that is, there exists $c=c(\tau)>0$ such that $\tau(x)\ge c$, for every $x\in \Sigma$.  Define
\begin{equation*} 
Y= \left\{ (x,t)\in \Sigma  \times \R \colon 0 \le t \le\tau(x) \right\}/\sim,
\end{equation*}
where we identify the points $(x,\tau(x))$ and $(\sigma(x),0)$, for all $x\in \Sigma $. The suspension flow over $\sigma$
with roof function $\tau$ is the semi-flow $\Theta = (
\theta_t)_{t \ge 0}$ on $Y$ defined by
\begin{equation*}
 \theta_t(x,s)= (x,s+t), \ \text{whenever $s+t\in[0,\tau(x)]$.}
\end{equation*} 

Denote by $\M(\Theta)$  the space of $\Theta$-invariant probability measures on $Y$ and $\M_{\le1}(\Theta)$  the space $\Theta$-invariant sub-probability measures on $Y$. Let $\M(\sigma;\tau)=\{\mu\in\M(\sigma): \int\tau d\mu<\infty\}$. It follows from a classical result of Ambrose and Kakutani \cite{ak} that the map $\varphi: \M(\sigma;\tau) \to \M(\Theta),$ defined by $\varphi(\mu)= \frac{\mu \times Leb}{\int \tau d\mu},$ 
where $Leb$ is the one dimensional Lebesgue measure, is a bijection. In particular, a measure $\nu\in \M_{\le 1}(\Theta)$ different from the zero measure can be uniquely  written as $\nu=\lambda  \frac{\mu \times Leb}{\int \tau d\mu}$, for some $\mu\in \M(\sigma;\tau)$ and $\lambda\in (0,1]$. 

Recall that $\Psi=\{\psi\in C_{uc}(\Sigma): \inf \psi>0, \text{var}_2(\psi)<\infty, P(-\psi)<\infty\}.$ For our applications we will be mainly interested in the case where $\tau\in \Psi$. 

\subsection{Topology of convergence on cylinders} In \cite[Section 6]{iv}, the authors introduced a topology on the space $\M_{\le 1}(\Theta)$ which is analogous to the cylinder topology on $\M_{\le1}(\sigma)$. 

\begin{definition} \label{def:top1} Let $(\nu_n)_n$ and $\nu$ be measures in $\M_{\le 1}(\Theta)$.  We say that $(\nu_n)_n$ converges on cylinders to $\nu$ if $$\lim_{n\to\infty}\nu_n(C\times [0,c])=\nu(C\times[0,c]),$$
for every cylinder $C\subseteq \Sigma$, and $c=c(\tau)$. 
\end{definition}

This notion of convergence induces the cylinder topology on $\M_{\le 1}(\Theta)$. Let $(C_i)_{i\in\N}$ be an enumeration of the cylinders of $\Sigma$. The function
$\rho_\tau:\M_{\le1}(\Theta)\times \M_{\le1}(\Theta)\to\R_{\ge 0},$ given by 
$$ \rho_\tau(\nu_1,\nu_2)=\sum_{i\in \N}\frac{1}{2^i}|\nu_1(C_i\times [0,c])-\nu_2(C_i\times [0,c])|,$$
is a metric on $\M_{\le1}(\Theta)$, compatible with the cylinder topology (see \cite[Lemma 6.6]{iv}). A sequence $(\nu_n)_n$ in $\M(\Theta)$ converges on cylinders to $\nu\in \M(\Theta)$ if and only if $(\nu_n)_n$ converges  in the weak$^*$ topology to $\nu$ (see \cite[Lemma 6.7]{iv}). In other words, the cylinder topology on $\M_{\le1}(\Theta)$ induces the weak$^*$ topology on  $\M(\Theta)$.

 By Kac's formula we have that $\nu(C\times [0,c])=\frac{c\mu(C)}{\int \tau d\mu},$ whenever $\nu\in \M(\Theta)$ and $\nu=\varphi(\mu)$. In particular, a sequence $(\nu_n)_n$ in $\M(\Theta)$ converges on cylinders to $\lambda \nu$, where $\lambda\in [0,1]$ and $\nu\in \M(\Theta)$, if and only if
 \begin{align}\label{valeric} \lim_{n\to\infty}\frac{\mu_n(C)}{\int \tau d\mu_n}=\lambda\frac{\mu(C)}{\int \tau d\mu},\end{align}
for every cylinder $C\subseteq \Sigma$, where $\varphi(\mu_n)=\nu_n$ and $\varphi(\mu)=\nu$ (see \cite[Remark 6.5]{iv}). A more detailed analysis on the relation given by (\ref{valeric}) give us the following useful fact.

\begin{lemma}\label{topsv} Let $(\mu_n)_n, \mu$ be measures in $\M(\sigma;\tau)$ and set $\nu_n=\varphi^{-1}(\mu_n)$, $\nu=\varphi^{-1}(\mu)$. Then the following staments are equivalent:
\begin{enumerate}
\item  $(\nu_n)_n$ converges on cylinders to the zero measure. 
\item Every subsequence of $(\mu_n)_n$ has a subsubsequence $(\mu_{n_k})_k$ which converges on cylinders to the zero measure or 
such that $\lim_{k\to\infty}\int \tau d\mu_{n_k}=\infty$.
\end{enumerate}
Similarly, the following statements are equivalent:
\begin{enumerate}
\item  $(\nu_n)_n$ converges on cylinders to $\lambda\nu$, where $\lambda\in (0,1]$. 
\item Every subsequence of $(\mu_n)_n$ has a subsubsequence $(\mu_{n_k})_k$ which converges on cylinders to $\lambda_1\mu$ and 
 $\lim_{k\to\infty}\int \tau d\mu_{n_k}=\lambda_2\int \tau d\mu$, for some $\lambda_1\in (0,1]$ and $\lambda_2\in [1,\infty)$ satisfying   $\lambda=\lambda_1/\lambda_2$.
\end{enumerate}
\end{lemma}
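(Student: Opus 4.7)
The identity (cf.~(\ref{valeric}))
\[\nu_n(C\times[0,c])=\frac{c\,\mu_n(C)}{\int\tau\,d\mu_n}, \qquad \nu(C\times[0,c])=\frac{c\,\mu(C)}{\int\tau\,d\mu},\]
valid for every cylinder $C\subseteq\Sigma$, reduces both statements to an analysis of the ratios $\mu_n(C)/\int\tau\,d\mu_n$. Since the cylinder topology on $\M_{\le1}(\Theta)$ is metrizable, a sequence converges on cylinders to a given limit iff every subsequence has a further subsubsequence doing so. The basic move is, given any subsequence, to extract $(\mu_{n_k})_k$ such that $\int\tau\,d\mu_{n_k}\to A\in[c,\infty]$, which is possible since $\tau\ge c>0$.

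\textbf{Part 1.} For $(1)\Rightarrow(2)$, given the subsubsequence above, the case $A=\infty$ is the second alternative of (2); if $A<\infty$ then rearranging the identity as $\mu_{n_k}(C)=(\int\tau\,d\mu_{n_k}/c)\,\nu_{n_k}(C\times[0,c])$ gives $\mu_{n_k}(C)\to 0$ for every cylinder, which is the first alternative. For $(2)\Rightarrow(1)$ I take a subsequence and apply (2); in the first alternative $\nu_{n_k}(C\times[0,c])\le\mu_{n_k}(C)\to 0$ using $\int\tau\,d\mu_{n_k}\ge c$, while in the second $\nu_{n_k}(C\times[0,c])\le c/\int\tau\,d\mu_{n_k}\to 0$, so in either case $(\nu_{n_k})_k$ converges on cylinders to the zero measure.

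\textbf{Part 2.} For $(1)\Rightarrow(2)$, extract $(\mu_{n_k})_k$ as above; I first rule out $A=\infty$, as it would force $\nu_{n_k}(C\times[0,c])\to 0$, contradicting $\nu_{n_k}(C\times[0,c])\to\lambda c\mu(C)/\int\tau\,d\mu>0$ on cylinders with $\mu(C)>0$. With $A<\infty$, the same rearrangement gives $\mu_{n_k}(C)\to (A\lambda/\int\tau\,d\mu)\,\mu(C)=:\lambda_1\mu(C)$, so $(\mu_{n_k})_k$ converges on cylinders to $\lambda_1\mu$; setting $\lambda_2:=A/\int\tau\,d\mu$ makes $\lambda=\lambda_1/\lambda_2$ tautological. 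The bound $\lambda_1\le 1$ holds because cylinder limits of probability measures have total mass at most one, and $\lambda_2\ge 1$ follows by combining Lemma \ref{tired} applied to the nonnegative potential $\tau$ (yielding $A\ge\lambda_1\int\tau\,d\mu$) with the lower bound $\tau\ge c$, which forces the escaping mass of total weight $1-\lambda_1$ to contribute at least $c(1-\lambda_1)$ to $A$. The direction $(2)\Rightarrow(1)$ is a direct substitution: for the subsubsequence supplied by (2),
\[\nu_{n_k}(C\times[0,c])=\frac{c\,\mu_{n_k}(C)}{\int\tau\,d\mu_{n_k}}\longrightarrow\frac{c\lambda_1\mu(C)}{\lambda_2\int\tau\,d\mu}=\lambda\,\nu(C\times[0,c]).\]

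The one genuinely delicate point is verifying the range $\lambda_2\ge 1$; everything else is careful bookkeeping on the Kac identity, compactness of $[c,\infty]$, and the subsubsequence principle for convergence in a metrizable topology.
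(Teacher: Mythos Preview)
The paper states this lemma without proof, so there is nothing to compare against; your argument via the Kac identity $\nu_n(C\times[0,c])=c\,\mu_n(C)/\int\tau\,d\mu_n$, compactness of $[c,\infty]$, and the subsubsequence principle is the natural one, and Parts~1 and~2 are handled correctly except at the single point you yourself flag.

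Your justification of $\lambda_2\ge 1$ does not work, and in fact the bound is false as stated. What you actually establish (and what a refinement of Lemma~\ref{tired} gives) is
\[
A=\lim_{k\to\infty}\int\tau\,d\mu_{n_k}\ \ge\ \lambda_1\int\tau\,d\mu\ +\ c(1-\lambda_1),
\]
so $\lambda_2\ge \lambda_1 + c(1-\lambda_1)/\int\tau\,d\mu$; this yields $\lambda_2\ge\lambda_1$ (equivalently $\lambda=\lambda_1/\lambda_2\le 1$), but not $\lambda_2\ge 1$ unless $\int\tau\,d\mu\le c$. A concrete counterexample: take a finite-entropy transitive CMS, fix $a\in S$, set $\tau=c+1_{[a]}\in\Psi$, choose $\mu\in\M(\sigma)$ with $\mu([a])>0$, and let $(\eta_n)_n\subset\M(\sigma)$ converge on cylinders to the zero measure. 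For $\mu_n=\tfrac12\mu+\tfrac12\eta_n$ one has $\lambda_1=\tfrac12$ and
\[
\lambda_2=\frac{c+\tfrac12\mu([a])}{c+\mu([a])}<1.
\]
So the range $\lambda_2\in[1,\infty)$ in the lemma's statement is inaccurate; the correct range is $\lambda_2\in[\lambda_1,\infty)$. This does not affect any application in the paper: the proof of Theorem~\ref{thm:susp} uses only $\lambda_2<\infty$, $\lambda_1\in(0,1]$, and the identity $\lambda=\lambda_1/\lambda_2$, all of which your argument does prove. In short, your proof is correct for everything that is actually true; the ``delicate point'' you isolate is an artifact of a slightly mis-stated lemma rather than a gap in your reasoning.
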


\begin{remark} It follows by Lemma \ref{topsv} that $(\nu_n)_n$ converges weak$^*$ to $\nu$ if and only if $(\mu_n)_n$ converges weak$^*$ to $\mu$ and $\lim_{n\to\infty} \int\tau d\mu_n=\int \tau d\mu$, where $\nu_n=\varphi^{-1}(\mu_n)$, $\nu=\varphi^{-1}(\mu)$.
\end{remark}

We now provide a proof of Theorem \ref{compactv}, which is analogous to Theorem \ref{compact} in the finite entropy case.

\begin{proof}[Proof of Theorem \ref{compactv}] Let $(\nu_n)_n$ be a sequence in $\M(\Theta)$ and set $\mu_n=\varphi^{-1}(\nu_n)$. We will prove that $(\nu_n)_n$ has a convergent subsequence. Maybe after passing to a subsequence we can assume that $(\int \tau d\mu_n)_n$ converges to $\infty$, or a real number. 
Note that if $\lim_{n\to\infty}\int \tau d\mu_n=\infty$, then by Lemma \ref{topsv}, $(\nu_{n})_n$ converges on cylinders to the zero measure. Let us now suppose that $\lim_{n\to\infty}\int\tau d\mu_n<\infty$. It follows by Theorem \ref{thm:bsctm} (applied to $-\tau$) that there exists a subsequence $(\mu_{n_k})_k$ which converges on cylinders to $\lambda \mu$, where $\lambda\in [0,1]$ and  $\mu\in \M(\sigma;\tau)$. By Lemma \ref{topsv}, the sequence $(\nu_{n_k})_k$ converges on cylinders to a measure in $\M_{\le 1}(\Theta)$. In general, if $(\nu_n)_n$ is any sequence in $\M_{\le1}(\Theta)$, we consider a subsequence where the mass converges and apply the discussion above.

To prove the density of $\M(\Theta)$ in $\M_{\le 1}(\Theta),$ it is enough to prove that there exists a sequence $(\nu_n)_n$ in $\M(\Theta)$ that converges to the zero measure. Indeed, if $\lambda\mu\in \M_{\le 1}(\Theta)$, then we can consider the sequence $(\lambda\nu+(1-\lambda)\nu_n)_n$ which converges to $\lambda\nu$. If $(\Sigma,\sigma)$ has finite entropy, then there exists a sequence $(\mu_n)_n$ which converges to the zero measure (see \cite[Lemma 4.6]{iv}). If $(\Sigma,\sigma)$ has infinite entropy, consider a sequence $(\mu_n)_n$ in $\M(\sigma)$ such that $\lim_{n\to\infty}h_{\mu_n}(\sigma)=\infty$; since $h_{\mu_n}(\sigma)-\int \tau d\mu_n\le P(-\tau)$, we conclude that $\lim_{n\to\infty}\int \tau d\mu_n=\infty$. In both cases it follows by Lemma \ref{topsv} that $(\varphi(\mu_n))_n$ converges to the zero measure. 
\end{proof}

\subsection{Entropy and pressure at infinity for suspension flows}

The entropy of a measure $\nu\in \M(\Theta)$ is denoted  by $h_\nu(\Theta)$ (this is defined as the entropy of $\nu$ with respect to the time one map $\theta_1$).  By Abramov's formula, we have that $h_\nu(\Theta)=\frac{h_\mu(\sigma)}{\int \tau d\mu},$ where $\varphi(\mu)=\nu$.  The entropy of  the suspension flow $(Y,\Theta)$ is defined by $h_{top}(\Theta)=\sup_{\nu\in\M(\Theta)} h_\nu(\Theta).$
Similarly, for a potential $\phi:Y\to\R$ we define its pressure by the formula 
$$P^\Theta(\phi)=\sup_{\nu\in\M(\Theta)} \bigg(h_\nu(\Theta)+\int \phi d\nu\bigg).$$
We added the superscript $\Theta$ to emphasize that the base dynamical system is the suspension flow $(Y,\Theta)$. We define the pressure at infinity of $\phi$ by the formula
$$P^\Theta_\infty(\phi)=\sup_{(\nu_n)_n\to 0} \limsup_{n\to\infty}\bigg( h_{\nu_n}(\Theta)+\int \phi d\nu_n\bigg),$$
where the supremum runs over all sequences $(\nu_n)_n$ in $\M(\Theta)$ which converge on cylinders to the zero measure. If  there is no such sequence, we set $P^\Theta_\infty(\phi)=-\infty$.  The entropy at infinity of $(Y,\Theta)$, which we denote by $h_{\infty}(\Theta)$, is defined as the pressure at infinity of the zero potential. 

For a potential $\phi:Y\to\R$, we define $\Delta_\phi:\Sigma\to\R$ by the formula $\Delta_\phi(x)=\int_0^{\tau(x)}\phi(\theta_t x)dt$. It follows by Kac's formula that  $\int \phi d\nu=\frac{\int \Delta_\phi d\mu}{\int \tau d\mu}$, where $\mu=\varphi^{-1}(\nu)$. Therefore, \begin{align}\label{presusp} P^\Theta(\phi)=\inf\{t: P(\Delta_\phi- t\tau)\le 0\}.\end{align}
We will prove that a similar formula holds for the pressure at infinity.

\begin{lemma}\label{lem:formulapara} $P_\infty^\Theta(\phi)=\inf\{t: P^{top}_\infty(\Delta_\phi- t\tau)\le 0\}.$
\end{lemma}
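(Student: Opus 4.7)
Set $s^\ast = \inf\{t : P^{top}_\infty(\Delta_\phi - t\tau) \leq 0\}$. The argument mirrors the derivation of equation (\ref{presusp}), transplanted to the ``at infinity'' setting. The key identity from Abramov and Kac is
\[
h_\nu(\Theta) + \int \phi\, d\nu \;=\; t + \frac{h_\mu(\sigma) + \int(\Delta_\phi - t\tau)\, d\mu}{\int \tau\, d\mu},
\]
valid for $\mu = \varphi^{-1}(\nu) \in \M(\sigma;\tau)$ and every $t \in \R$. The bridge between cylinder convergence on the base and on the flow is Lemma \ref{topsv}.

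\textbf{Upper bound.} Fix $t > s^\ast$, so $P^{top}_\infty(\Delta_\phi - t\tau) \leq 0$ and hence $P(\Delta_\phi - t\tau) < \infty$ by Remark \ref{rem:inf}. Let $(\nu_n)$ be a sequence in $\M(\Theta)$ converging on cylinders to the zero measure, and set $\mu_n = \varphi^{-1}(\nu_n) \in \M(\sigma;\tau)$. Along any subsequence, Lemma \ref{topsv} produces a further subsubsequence of one of two types. If $\int \tau\, d\mu_{n_k} \to \infty$, the second summand in the identity is bounded above by $P(\Delta_\phi - t\tau)/\int \tau\, d\mu_{n_k} \to 0$. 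If instead $(\mu_{n_k})$ converges on cylinders to zero, the variational principle (Theorem \ref{varpriplus}, extended to $\H$ via Remark \ref{rem:Hinfty}) yields $\limsup_k (h_{\mu_{n_k}}(\sigma) + \int(\Delta_\phi - t\tau)\, d\mu_{n_k}) \leq 0$, and the lower bound $\int \tau\, d\mu_{n_k} \geq c(\tau) > 0$ forces the second summand to have non-positive $\limsup$. Either way $\limsup_n (h_{\nu_n}(\Theta) + \int \phi\, d\nu_n) \leq t$; sending $t \searrow s^\ast$ gives $P^\Theta_\infty(\phi) \leq s^\ast$.

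\textbf{Lower bound.} Fix $t < s^\ast$, so $P^{top}_\infty(\Delta_\phi - t\tau) > 0$. When this value is finite, Proposition \ref{presinf} provides $(\mu_n)$ converging on cylinders to zero with $h_{\mu_n}(\sigma) + \int(\Delta_\phi - t\tau)\, d\mu_n \to P^{top}_\infty(\Delta_\phi - t\tau) > 0$; when it is $+\infty$, the standard variational principle provides $(\mu_n)$ with this sum tending to infinity, and the estimate $h_{\mu_n}(\sigma) - \int \tau\, d\mu_n \leq P(-\tau)$ combined with $|\Delta_\phi| \leq \|\phi\|_0 \tau$ forces $\int \tau\, d\mu_n \to \infty$. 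One checks that $\mu_n \in \M(\sigma;\tau)$, so $\nu_n := \varphi(\mu_n) \in \M(\Theta)$; Lemma \ref{topsv} then shows $(\nu_n)$ converges on cylinders to the zero measure. The identity, with numerator eventually positive and $\int \tau\, d\mu_n \geq c(\tau)$, gives $\liminf_n (h_{\nu_n}(\Theta) + \int \phi\, d\nu_n) \geq t$, hence $P^\Theta_\infty(\phi) \geq t$ for every $t < s^\ast$.

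\textbf{Main obstacle.} The delicate point is the dichotomy in Lemma \ref{topsv}: a sequence $(\nu_n)$ tending on cylinders to zero in $\M(\Theta)$ does not force the base sequence $(\mu_n)$ to do so—only that along any subsequence one of two alternatives must be extracted, and both must be handled uniformly in the upper bound. A secondary technicality in the lower bound is verifying $\mu_n \in \M(\sigma;\tau)$ for the measures supplied by Proposition \ref{presinf} (or by the variational principle in the $+\infty$ case), which relies on the hypothesis $\tau \in \Psi$ and on the pointwise control $|\Delta_\phi| \leq \|\phi\|_0 \tau$ coming from $\phi \in C_b(Y)$.
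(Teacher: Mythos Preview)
Your proof is correct and follows essentially the same route as the paper's. The only differences are organizational: in the upper bound the paper works with a single sequence realizing $P_\infty^\Theta(\phi)$ and passes to a subsequence along which $\int\tau\,d\mu_n$ converges (to a finite value or to $+\infty$), whereas you treat an arbitrary sequence via the dichotomy of Lemma~\ref{topsv}; in the lower bound the paper splits according to whether $\inf\{t:P_\infty^{top}(\Delta_\phi-t\tau)<\infty\}$ is strictly below or equal to $s^\ast$ and works at $t=s^\ast$ (respectively $t=s^\ast-\epsilon$), while you fix $t<s^\ast$ and split on whether $P_\infty^{top}(\Delta_\phi-t\tau)$ is finite or infinite --- these are the same two scenarios. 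The technicalities you flag (that the measures produced lie in $\M(\sigma;\tau)$, and the regularity needed to invoke the variational principle at infinity for $\Delta_\phi-t\tau$) are treated at the same level of detail in the paper.
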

\begin{proof} Since $\tau$ is bounded away from zero, the function $f(t)=P^{top}_\infty(\Delta_\phi- t\tau)$ is decreasing. Define $s=\inf\{t: P^{top}_\infty(\Delta_\phi- t\tau)<\infty\}$ and set $a=\inf\{t: P^{top}_\infty(\Delta_\phi- t\tau)\le 0\}$. Note that $f(t)=\infty$ if $t<s$ and $f(t)<\infty$ if $t>s$. %Furthermore, note that $f(t)<\infty$ if and only if $P(\Delta_\phi-t\tau)<\infty$.

If there is no sequence in $\M(\Theta)$ that converges to the zero measure, then there is no sequence in $\M(\sigma)$ that converges to the zero measure (see Lemma \ref{topsv}). In this case, we have that $P_\infty^\Theta(\phi) = -\infty,$ and that $P^{top}_\infty(\Delta_\phi - t\tau) = -\infty$, for all $t$ (see Remark \ref{rem:inf}), and thus the formula holds. Let us now assume that there exists a sequence in $\M(\Theta)$ which converges to the zero measure. In particular, $P_\infty(\phi)$ can be realized by a sequence of measures converging to the zero measure.

Fix $t>a$. Let $(\nu_n)_n$ be a sequence in $\M(\Theta)$ that converges to the zero measure and such that $P_\infty^\Theta(\phi)=\lim_{n\to\infty}\big(h_{\nu_n}(\Theta)+\int \phi d\nu_n\big)$. Set $\mu_n\in \M(\sigma)$ such that $\varphi(\mu_n)=\nu_n$. Maybe after passing to a subsequence, we can assume that $(\int \tau d\mu_n)_n$ converges to a real number or $\infty$. In the first case, we have that $(\mu_n)_n$ converges to the zero measure (see Lemma \ref{topsv}), and therefore $\limsup_{n\to\infty}\big(h_{\mu_n}(\sigma)+\int (\Delta_\phi-t\tau) d\mu_n\big)< 0$, which implies that $\lim_{n\to\infty}\big(h_{\nu_n}(\Theta)+\int \phi d\nu_n\big)< t$. In the second case, consider $\epsilon>0$ and note that $$h_{\mu_n}(\sigma)+\int (\Delta_\phi-t\tau) d\mu_n\le P(\Delta_\phi- (t+\epsilon)\tau)+\epsilon\int \tau d\mu_n,$$ which implies that $\lim_{n\to\infty}h_{\nu_n}(\Theta)+\int \phi d\nu_n\le t+\epsilon$ (observe that $P(\Delta_\phi- (t+\epsilon)\tau)<\infty$,  because $P^{top}_\infty(\Delta_\phi- (t+\epsilon)\tau)<\infty$, see Remark \ref{rem:inf}(1)). In both cases we conclude that $P_\infty^\Theta(\phi)<t$, for every $t>a$, and therefore $P_\infty^\Theta(\phi)\le a$. 

It remains to prove that $P_\infty^\Theta(\phi)\ge a$. We consider two cases:\\
Case 1 ($s<a$): In this case, we have that $P^{top}_\infty(\Delta_\phi-a\tau)=0$. Therefore, there exists a sequence $(\mu_n)_n$ that converges on cylinders to the zero measure such that $\lim_{n\to\infty}\big(h_{\mu_n}(\sigma)+\int (\Delta_\phi-a\tau)d\mu_n\big)=0$, and thus $\lim_{n\to\infty}\big(h_{\nu_n}(\Theta)+\int \phi d\nu_n\big)=a$, where $\nu_n=\varphi(\mu_n)$. The sequence $(\nu_n)_n$ converges to zero, and therefore $P_\infty^\Theta(\phi)\ge a$.\\
Case 2 ($s=a$): Fix $\epsilon>0$. Since  $P^{top}_\infty(\Delta_\phi-(s-\epsilon)\tau)=\infty$, it follows that $P(\Delta_\phi-(s-\epsilon)\tau)=\infty$. There exists a sequence $(\mu_n)_n$ in $\M(\sigma)$ such that 
$$\lim_{n\to\infty}\big(h_{\mu_n}(\sigma)+\int (\Delta_\phi-(s-\epsilon)\tau)d\mu_n\big)=\infty.$$ Then, $\limsup_{n\to\infty}\big(h_{\nu_n}(\Theta)+\int \phi d\nu_n\big)\ge s-\epsilon$, where $\nu_n=\varphi(\mu_n)$. Note that $$h_{\mu_n}(\sigma)+\int (\Delta_\phi-(s-\epsilon)\tau)d\mu_n\le P(\Delta_\phi-(s+\epsilon)\tau)+2\epsilon \int \tau d\mu_n,$$ and therefore $\lim_{n\to\infty} \int \tau d\mu_n=\infty$. We conclude that $(\nu_n)_n$ converges to zero, and then $P_\infty^\Theta(\phi)\ge s-\epsilon$. Since $\epsilon>0$ was arbitrary, it follows that  $P_\infty^\Theta(\phi)\ge s$.
\end{proof}

%\begin{remark} If there is no sequence in $\M(\sigma)$ which converges to the zero measure, then $P_\infty(\Delta_\phi- t\tau)=-\infty$ for all $t\in\R$, and therefore $\inf\{t: P_\infty(\Delta_\phi- t\tau)\le 0\}=-\infty$. \end{remark}

%\begin{lemma} If $P^\Theta(\phi)=\infty$, then $P_\infty^\Theta(\phi)=\infty$.
%\end{lemma}

\begin{remark}\label{Psisusp} Let $\tau\in \Psi$. Note that $h_{top}(\Theta)<\infty$ (see equation (\ref{presusp})). In particular, if $\phi\in C_b(Y)$, then $P^\Theta(\phi)<\infty$. %Additionally, by Theorem \ref{compactsusp}, there are sequences in $\M(\Theta)$ which converge to the zero measure, which is relevant for the definition of the pressure at infinity. 
Furthermore, the entropy at infinity of $(Y,\Theta)$ is given by $h_\infty(\Theta)=h(\tau)$ (see Definition \ref{def:123} and Lemma \ref{lem:formulapara}).
\end{remark}

We now provide a proof of Theorem \ref{thm:susp}, which is similar to Theorem \ref{thm:1} and analogous to Corollary \ref{cor:finent}, where we considered a finite entropy CMS and a bounded potential.

\begin{proof}[Proof of Theorem \ref{thm:susp}] Maybe after passing to a subsequence we can assume that $(h_{\nu_n}(\Theta)+\int \phi d\nu_n)_n$ is convergent and converges to the limsup. Set $\mu_n=\varphi^{-1}(\nu_n)$ and $\mu=\varphi^{-1}(\nu)$. Assume that $\lambda\in (0,1]$, otherwise the result follows by definition of the pressure at infinity. By Lemma \ref{topsv}, maybe after passing to a subsequence, we can assume that $(\mu_{n})_n$  converges on cylinders to $\lambda_1\mu$, and $\lim_{k\to\infty}\int \tau d\mu_{n}=\lambda_2\int \tau d\mu$, where $\lambda_1\in (0,1]$, $\lambda_2\in [1,\infty)$ and $\lambda=\lambda_1/\lambda_2$. 

By Lemma \ref{lem:formulapara}, we have that if $t>P_\infty^\Theta(\phi)$, then $P^{top}_\infty(\Delta_\phi-t\tau)<0$. Note that $\Delta_\phi-t\tau$ is uniformly continuous with finite second variation and finite pressure (see Lemma \ref{lem:infpre}). In particular, we can apply Theorem \ref{thm:1} and conclude that 
 $$\limsup_{n\to\infty}\big(h_{\mu_{n}}(\sigma)-\int (\Delta_\phi-t\tau)  d\mu_{n}\big)\le \lambda_1\big(h_{\mu}(\sigma)- \int (\Delta_\phi-t\tau) d\mu\big),$$
and therefore, 
\begin{align*}
\limsup_{n\to\infty} (h_{\nu_n}(\Theta)-\int (\phi-t) d\nu_n)&=\limsup_{n\to\infty}\bigg(\frac{h_{\mu_{n}}(\sigma)-\int (\Delta_\phi-t\tau)  d\mu_{n}}{\int \tau d\mu_{n}}\bigg)\\
&\le \frac{\lambda_1}{\lambda_2}\bigg(\frac{h_{\mu}(\sigma)- \int (\Delta_\phi-t\tau) d\mu}{\int \tau d\mu}\bigg)\\
&=\lambda(h_\nu(\Theta)- \int (\phi-t)d\nu),
\end{align*}
which is equivalent to 
$$\limsup_{n\to\infty} \bigg(h_{\nu_n}(\Theta)+\int \phi d\nu_n\bigg)\le \lambda (h_{\nu}(\Theta)+\int\phi d\nu)+(1-\lambda)t.$$
Since $t>P_\infty^\Theta(\phi)$ was arbitrary we obtain the desired inequality. 

We now prove that the inequality is sharp. Let $(\eta_n)_n$ be a sequence in $\M(\Theta)$ that converges to the zero measure and such that $\lim_{n\to\infty} \big(h_{\eta_n}(\Theta)+\int \phi d\eta_n\big)=P_\infty^\Theta(\phi)$. The existence of such sequence follows by Theorem \ref{compactv}. Set $\nu_n=\lambda\nu+(1-\lambda)\eta_n.$ Note that $(\nu_n)_n$ converges to $\lambda\nu$ and it achives equality in the pressure inequality. 
\end{proof}

Analogous to the case of CMS, we say that $\phi:Y\to\R$ is SPR if  $P^\Theta(\phi)>P_\infty^\Theta(\phi)$.  The next result follows from combining  Theorem \ref{compactv} and Theorem \ref{thm:susp}. The proof is analogous to the proof of Theorem \ref{thm:2} and we leave the details to the reader. 

\begin{theorem}\label{fine:sus} Let $\tau\in \Psi$ and $\phi\in \H_Y$. Let $(\nu_n)_n$ be a sequence in $\M(\Theta)$ such that  $P^\Theta(\phi)=\lim_{n\to\infty} \big( h_{\nu_n}(\Theta)+\int \phi d\nu_n\big)$. Then:
\begin{enumerate}
\item If $\phi$ is SPR, then $(\nu_n)_n$ has a subsequence which converges in the weak$^*$ topology to an equilibrium state of $\phi$. In particular, $\phi$ admits equilibrium states.
\item If $\phi$ does not have an equilibrium state, then $(\nu_n)_n$ converges to the zero measures. In this case, $P_\infty^\Theta(\phi)=P^\Theta(\phi)$.
\end{enumerate}

\end{theorem}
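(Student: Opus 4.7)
The plan is to mirror the proof of Theorem \ref{thm:2}, with Theorem \ref{compactv} playing the role of Theorem \ref{thm:bsctm} and Theorem \ref{thm:susp} replacing Theorem \ref{thm:1}. Since $\tau\in\Psi$ and $\phi\in\H_Y\subseteq C_b(Y)$, Remark \ref{Psisusp} ensures $P^\Theta(\phi)<\infty$, and $\int\phi\,d\nu$ is finite for every $\nu\in\M(\Theta)$, so the hypothesis $P^\Theta(\phi)=\lim_n(h_{\nu_n}(\Theta)+\int\phi\,d\nu_n)$ makes sense as an equality with finite limit. The boundedness of $\phi$ also bypasses the extra complications with $\int\phi\,d\mu_n\to-\infty$ that had to be handled in Theorem \ref{thm:2}.

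For part (1), I would first use the compactness in Theorem \ref{compactv} to extract a subsequence, which I still denote by $(\nu_n)_n$, converging on cylinders to some $\lambda\nu$ with $\lambda\in[0,1]$ and $\nu\in\M(\Theta)$. Applying Theorem \ref{thm:susp} then yields
$$P^\Theta(\phi)=\lim_{n\to\infty}\bigl(h_{\nu_n}(\Theta)+\int\phi\,d\nu_n\bigr)\le \lambda\bigl(h_\nu(\Theta)+\int\phi\,d\nu\bigr)+(1-\lambda)P_\infty^\Theta(\phi).$$
Since $\phi$ is SPR we have $P_\infty^\Theta(\phi)<P^\Theta(\phi)$, while the variational principle gives $h_\nu(\Theta)+\int\phi\,d\nu\le P^\Theta(\phi)$; the only convex combination of these two quantities that reaches $P^\Theta(\phi)$ is the one with $\lambda=1$ and $h_\nu(\Theta)+\int\phi\,d\nu=P^\Theta(\phi)$. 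Hence $\nu$ is an equilibrium state, and because cylinder convergence in $\M(\Theta)$ to a probability measure coincides with weak$^*$ convergence (cf.\ Lemma 6.7 of \cite{iv}), the extracted subsequence converges weak$^*$ to $\nu$.

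For part (2), I would argue by contradiction: if some subsequence of $(\nu_n)_n$ failed to converge on cylinders to the zero measure, then by Theorem \ref{compactv} a further subsubsequence would converge on cylinders to $\lambda\nu$ with $\lambda\in(0,1]$ and $\nu\in\M(\Theta)$. Applying Theorem \ref{thm:susp} once again, together with the strict inequality $h_\nu(\Theta)+\int\phi\,d\nu<P^\Theta(\phi)$ (since there is no equilibrium state) and the general bound $P_\infty^\Theta(\phi)\le P^\Theta(\phi)$, the right-hand side of the pressure inequality is strictly less than $P^\Theta(\phi)$ whenever $\lambda>0$, contradicting the assumed limit. Hence every subsequence has a further subsequence converging on cylinders to the zero measure, and therefore $(\nu_n)_n$ itself converges on cylinders to the zero measure. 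This then yields $P^\Theta(\phi)=\lim_n(h_{\nu_n}(\Theta)+\int\phi\,d\nu_n)\le P_\infty^\Theta(\phi)$ directly from the definition of $P_\infty^\Theta$, and combined with the reverse inequality gives the claimed equality.

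The only real subtlety is the bookkeeping of strict versus non-strict inequalities that pins down $\lambda$ from the convex combination of $P^\Theta(\phi)$, $h_\nu(\Theta)+\int\phi\,d\nu$ and $P_\infty^\Theta(\phi)$; this is precisely the same step as in the CMS case of Theorem \ref{thm:2}, and no analytic ingredients beyond Theorems \ref{compactv} and \ref{thm:susp} are required.
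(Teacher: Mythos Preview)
Your proposal is correct and follows exactly the approach the paper indicates: the paper's own proof reads ``The proof is analogous to the proof of Theorem \ref{thm:2} and we leave the details to the reader,'' and you carry out precisely that analogy using Theorem \ref{compactv} and Theorem \ref{thm:susp}. Your subsequence-of-subsequence argument in part (2) is the right way to upgrade from ``some subsequence goes to zero'' to ``the whole sequence goes to zero,'' which is what Theorem \ref{fine:sus}(2) actually claims, and the boundedness of $\phi\in C_b(Y)$ is exactly what eliminates the $\int\phi\,d\mu_n\to-\infty$ branch that appears in Theorem \ref{thm:2}.
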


\appendix

\section{Tightness in the space of invariant probability measures}

Let $(\Sigma,\sigma)$ be a transitive countable Markov shift. Let $(\mu_n)_n$ be a sequence in $\M(\sigma)$. The condition
\begin{align}\label{pivale}\lim_{k\to\infty}\limsup_{n\to\infty}\mu_n\big(\bigcup_{s\ge k}[s]\big)=0,\end{align}
says that $(\mu_n)_n$ does not lose mass in the cylinder topology. In Lemma \ref{lem:nicev}, we prove that this condition is equivalent to saying that  $(\mu_n)_n$ is tight. Since $\Sigma$ may not be locally compact, it is convenient to replace tightness with something more suitable to our setup. 

\begin{lemma}\label{lem:nicev} Let $(\mu_n)_n$ be a sequence in $\M(\sigma)$ such that (\ref{pivale}) holds. Then, there exists a subsequence of $(\mu_n)_n$ which converges in the weak$^*$ topology.
\end{lemma}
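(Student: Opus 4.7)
The plan is to extract a subsequence that converges on cylinders, verify that the limit is a bona fide invariant probability measure (no mass lost, countably additive), and then upgrade cylinder convergence to weak$^*$ convergence via Remark \ref{rem:compatibility}. Since the collection of cylinders in $\Sigma$ is countable, a standard diagonal extraction produces a subsequence $(\mu_{n_k})_k$ and a non-negative finitely additive set function $\mu$ on the algebra generated by cylinders such that $\mu_{n_k}(C)\to \mu(C)$ for every cylinder $C$.

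The central task, which I expect to be the main obstacle, is to show that $\mu$ extends to a shift-invariant Borel probability measure on $\Sigma$. The key is to combine shift-invariance of each $\mu_n$ with the hypothesis (\ref{pivale}): since $\{x:x_i>N\}=\sigma^{-(i-1)}\{x:x_1>N\}$, invariance gives $\mu_n(\{x:x_i>N\})=\mu_n(\{x:x_1>N\})$, so the tail estimate supplied by (\ref{pivale}) controls mass in every coordinate. To obtain $\mu(\Sigma)=1$, I will decompose $1=\mu_{n_k}(\bigcup_{s\le K}[s])+\mu_{n_k}(\bigcup_{s>K}[s])$, pass to the limit in $k$, and then send $K\to\infty$, using (\ref{pivale}) to dispose of the tail. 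Countable additivity on the algebra of cylinders reduces to verifying $\mu([a_1,\ldots,a_m])=\sum_b\mu([a_1,\ldots,a_m,b])$; the inequality $\ge$ is immediate from the finite case plus non-negativity, while for $\le$ I will write
\begin{equation*}
\mu_n([a_1,\ldots,a_m])\le \sum_{b\le N}\mu_n([a_1,\ldots,a_m,b])+\mu_n(\{x:x_{m+1}>N\}),
\end{equation*}
bound the tail by $\mu_n(\{x:x_1>N\})$ using shift-invariance, pass to the limit in $k$, and finally send $N\to\infty$, invoking (\ref{pivale}) once more. Shift-invariance of $\mu$ then follows by applying the identity $\mu_{n_k}(\sigma^{-1}C)=\mu_{n_k}(C)$ to any cylinder $C$, expanding $\sigma^{-1}C$ as a countable disjoint union of cylinders, and using the countable additivity of $\mu$ just established.

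Once $\mu\in\M(\sigma)$ is secured and $(\mu_{n_k})_k$ converges on cylinders to $\mu$, the fact that $\mu$ is a probability measure allows me to apply Remark \ref{rem:compatibility} directly, promoting cylinder convergence to weak$^*$ convergence and completing the proof. The essential point is that (\ref{pivale}) is used twice: once to prevent loss of total mass, and once to upgrade finite additivity of the limit to countable additivity; absent this hypothesis, mass could escape to infinity and the limit $\mu$ would be only a sub-probability measure, so no weak$^*$-convergent subsequence would exist.
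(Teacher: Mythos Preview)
Your proposal is correct and follows essentially the same approach as the paper: diagonal extraction on cylinders, then verifying countable additivity of the limit by bounding the tail $\mu_n([a_1,\ldots,a_m,b])$ for $b>N$ by $\mu_n([b])$ via shift-invariance and invoking (\ref{pivale}), then checking the limit has full mass, and finally upgrading cylinder convergence to weak$^*$ convergence. The only cosmetic difference is that the paper outsources the countable-additivity step and the invariance of the limit to \cite[Proposition 4.11 and Lemma 4.1]{iv}, whereas you spell out those details directly; the underlying arguments are the same.
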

\begin{proof} We follow closely the idea of the proof of \cite[Proposition 4.11]{iv}. By countability of the set of cylinders, we can find a subsequence $(\mu_{n_\ell})_\ell$ such that $\lim_{\ell\to\infty}\mu_{n_\ell}(C)=:F(C)$ is well defined for every cylinder $C\subseteq \Sigma$. We  claim that $F$ extends to a countably additive measure on $\Sigma$. As explained in \cite[Proposition 4.11]{iv}, it is enough to prove that 
\begin{align}\label{valericca}\lim_{k\to\infty}\lim_{\ell\to\infty}\mu_{n_\ell}\big(\bigcup_{s\ge k}[a_1,\ldots, a_m,s]\big)=0,\end{align}
for every cylinder $[a_1,\ldots, a_m]$. Since $\mu_n([s])=\mu_n(\sigma^{-m}[s])\ge \mu_n([a_1,\ldots, a_m,s])$, it follows that (\ref{pivale}) implies that (\ref{valericca}). We conclude that $F$ extends to a countably additive measure $\mu$, and that $(\mu_{n_\ell})_\ell$ converges on cylinders to $\mu\in \M_{\le1}(\sigma)$ (see \cite[Proposition 4.11 and Lemma 4.1]{iv}). Observe that (\ref{pivale}) implies $\lim_{k\to\infty}(1-\mu(\bigcup_{s<k} [s]))=0$, and therefore $\mu$ is a probability measure. Since $(\mu_{n_\ell})_\ell$ converges on cylinders to a probability measure we conclude that the sequence converges in the weak$^*$ topology. \end{proof}

We apply Lemma \ref{lem:nicev} to prove that, for the full shift, the escape of mass can be completely ruled out in certain cases.  

\begin{proposition}\label{lem:finalfull} Let $(\N^\N, \sigma)$ be the full shift and $\psi\in C_{uc}(\Sigma)$ a potential such that $\text{var}_1(\psi)$ and $P(\psi)$ are finite. Let $(\mu_n)_n$ be a sequence in $\M(\sigma)$ such that $$P(\psi)=\lim_{n\to\infty}\big(h_{\mu_n}(\sigma)+\int \psi d\mu_n\big).$$ Then, $(\mu_n)_n$ has a subsequence which converges in the weak$^*$ topology. 
\end{proposition}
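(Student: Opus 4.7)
The plan is to reduce the problem to verifying the ``no loss of mass'' condition (\ref{pivale}), at which point Lemma \ref{lem:nicev} provides a weak$^*$ convergent subsequence. To verify (\ref{pivale}), I will construct an auxiliary function $\tilde f(x)=f(x_1)$ depending only on the first coordinate, with $f:\N\to[0,\infty)$ and $f(s)\to\infty$, such that $\psi+\tilde f\in\H$. The variational principle applied to $\psi+\tilde f$ will then yield a uniform bound on $\int\tilde f\,d\mu_n$, and Markov's inequality will deliver (\ref{pivale}).

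Set $a(s):=\sup_{[s]}\psi$. The hypotheses $P(\psi)<\infty$ and $\text{var}_1(\psi)<\infty$ together with Lemma \ref{lem:vale1} applied to $-\psi$ imply that $\psi$ is bounded above and $a(s)\to-\infty$, so that $\psi\in\H$. Comparing $\psi$ with the locally constant potential $x\mapsto \inf_{[x_1]}\psi$ and invoking Example \ref{ex:full}, the finiteness of $P(\psi)$ forces $\sum_s e^{a(s)}<\infty$; this summability is the starting point for constructing $f$.

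I construct $f$ by a de la Vall\'ee-Poussin-type choice. Put $T_s:=\sum_{t\ge s}e^{a(t)}$, so that $T_s\downarrow 0$, and let
\begin{equation*}
f(s):=C_0-\tfrac{1}{2}\log T_s,
\end{equation*}
where $C_0\in\R$ is chosen so that $f\ge 0$. Then $f(s)\to\infty$, and the elementary inequality $\sqrt{T_s}-\sqrt{T_{s+1}}\ge (T_s-T_{s+1})/(2\sqrt{T_s})$ telescopes to give
\begin{equation*}
\sum_s e^{a(s)+f(s)}=e^{C_0}\sum_s \frac{T_s-T_{s+1}}{\sqrt{T_s}}\le 2e^{C_0}\sqrt{T_1}<\infty.
\end{equation*}
Since $T_s\ge e^{a(s)}$, we also have $f(s)\le C_0-a(s)/2$, hence $a(s)+f(s)\le C_0+a(s)/2$ is bounded above; in particular $\psi+\tilde f$ is bounded above. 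Combined with $\text{var}_n(\tilde f)=0$ for $n\ge 1$ and the bound $P(\psi+\tilde f)\le \text{var}_1(\psi)+\log\sum_s e^{a(s)+f(s)}<\infty$ (coming from approximation of $\psi+\tilde f$ by the locally constant upper envelope $a(x_1)+f(x_1)$), this places $\psi+\tilde f$ in $\H$, so the variational principle is available for this potential.

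To conclude, the variational principle for $\psi+\tilde f$ gives
\begin{equation*}
h_{\mu_n}(\sigma)+\int\psi\,d\mu_n+\int\tilde f\,d\mu_n\le P(\psi+\tilde f).
\end{equation*}
The hypothesis $h_{\mu_n}(\sigma)+\int\psi\,d\mu_n\to P(\psi)>-\infty$ forces $\int\psi\,d\mu_n>-\infty$ for all large $n$, and since $\tilde f\ge 0$ and $\psi+\tilde f$ is bounded above, additivity of the integral yields $\int\tilde f\,d\mu_n<\infty$ and $\sup_n\int\tilde f\,d\mu_n<\infty$. Markov's inequality in the form
\begin{equation*}
\mu_n\Big(\bigcup_{s\ge k}[s]\Big)\le \frac{\int\tilde f\,d\mu_n}{\min_{s\ge k}f(s)},
\end{equation*}
combined with $\min_{s\ge k}f(s)\to\infty$, yields (\ref{pivale}), and Lemma \ref{lem:nicev} finishes the proof. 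The delicate point in this plan is the construction of $f$: it must grow fast enough to force tightness through Markov, yet slowly enough that $\psi+\tilde f$ retains finite pressure, and the de la Vall\'ee-Poussin-style choice above threads that needle.
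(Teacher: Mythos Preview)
Your proof is correct and takes a genuinely different route from the paper's argument. The paper argues by contradiction: assuming that $\lim_{n\to\infty}\mu_n\big(\bigcup_{s\ge k}[s]\big)\ge\e>0$ for every $k$, it tests against the potentials $\psi+t1_{A_k}$ to obtain $P(\psi+t1_{A_k})\ge P(\psi)+t\e$ for all $t\ge0$ and all $k$, and then derives a contradiction by an explicit pressure computation (via Example \ref{ex:full}) after replacing $\psi$ by its first-coordinate upper envelope. Your approach is direct and constructive: you build a single coercive function $\tilde f$ via a de la Vall\'ee-Poussin choice so that $P(\psi+\tilde f)<\infty$, apply the variational principle once to bound $\sup_n\int\tilde f\,d\mu_n$, and conclude tightness from Markov's inequality. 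The paper's proof is perhaps more elementary in that it avoids constructing $f$, while your argument is cleaner conceptually---it isolates exactly the uniform-integrability mechanism behind the result and would transfer verbatim to any setting where a de la Vall\'ee-Poussin function with finite pressure can be produced from the summability $\sum_s e^{a(s)}<\infty$.
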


\begin{proof} Maybe after passing to a subsequence, we can assume that $\lim_{n\to\infty}\mu_n(C)$ is well defined for every cylinder $C\subseteq\Sigma$. By Lemma \ref{lem:nicev}, it is enough to prove that $$\lim_{k\to\infty}\lim_{n\to\infty}\mu_n\big(\bigcup_{s\ge k}[s]\big)=0.$$
We argue by contradiction and suppose this is not the case.  Since the sequence \\$(\lim_{n\to\infty}\mu_n\big(\bigcup_{s\ge k}[s]\big))_k$ is decreasing, there exists $\e>0$ such that $$\lim_{n\to\infty}\mu_n\big(\bigcup_{s\ge k}[s]\big)\ge \e,$$ for every $k\in \N$. Set $A_k=\bigcup_{s\ge k}[s]$. In particular,
$$P\big(\psi+t1_{A_k}\big)\ge \limsup_{n\to\infty} \big(h_{\mu_n}(\sigma)+\int \psi d\mu_n+t\mu_n(A_k)\big)\ge P(\psi)+t\e,$$
for all $t\in [0,+\infty)$ and $k\in\N$. Consider $\psi_0(x)=\sup\{\psi(y):y\in [x_1]\}$, where $x=(x_1,\ldots)\in\N^\N$. Note that  since  $\text{var}_1(\psi)$ is finite, $\|\psi-\psi_0\|_0=: M<\infty$. Also note that $\text{var}_1(\psi_0)=0$. The inequality above implies that 
$$P\big(\psi_0+t1_{A_k}\big)\ge P(\psi_0)+t\e-2M,$$
for all $t\in [0,+\infty)$ and $k\in\N$. Note that $P(\psi_0)\le P(\psi)+M<\infty$. By Example \ref{ex:full}, we have that $P(\psi_0)=\log(\sum_{i\in\N}e^{\psi_0(i)})$, and therefore the series $\sum_{i\in\N}e^{\psi_0(i)}$ converges. Set $t_0\in [0,+\infty)$ such that $(t_0\e-2M)>2$, and $m\in \N$ such that $\sum_{i\ge m}e^{\psi_0(i)}<e^{P(\psi_0)-t_0}$. It follows that 
\begin{align*} 
P(\psi_0)+2< P(\psi_0)+(t_0\e-2M)&<P(\psi_0+t_01_{A_m})\\
&=\log\big(\sum_{i<m}e^{\psi_0(i)}+\sum_{i\ge m}e^{\psi_0(i)+t_0}\big)\\
&\le \log\big(\sum_{i<m}e^{\psi_0(i)}+e^{P(\psi_0)}\big)\\
&\le \log(2e^{P(\psi_0)})\\
&=P(\psi_0)+\log2,
\end{align*}
which is a contradiction. We conclude that the sequence $(\mu_n)_n$ satisfies (\ref{pivale}) and by Lemma \ref{lem:nicev} we conclude that $(\mu_n)_n$ has a subsequence which converges in the weak$^*$ topology to an invariant probability measure. 
\end{proof}

\end{document}